\pdfoutput=1
\documentclass[11pt,a4paper]{amsart}
\usepackage[margin=1in]{geometry}
\usepackage[utf8]{inputenc}
\usepackage[T1]{fontenc}
\usepackage{textcase}
\usepackage[dvipsnames]{xcolor}
\usepackage{microtype}
\usepackage{fnpct}

\usepackage{amsmath}
\usepackage{amssymb}
\usepackage{eucal}
\usepackage{mathrsfs}
\usepackage{tikz-cd}
\renewcommand{\injlim}{\varinjlim}
\renewcommand{\projlim}{\varprojlim}

\usepackage{enumitem}
\usepackage{booktabs}
\usepackage[pdfusetitle,colorlinks]{hyperref}
\hypersetup{bookmarksdepth=2,pdfencoding=unicode,allcolors=MidnightBlue}

\usepackage{zref-clever}
\zcsetup{abbrev=false,cap=true,nameinlink=false,sort=false,lang=english}
\newcommand{\cref}[1]{\zcref{#1}}
\newcommand{\Cref}[1]{\zcref[S]{#1}}
\zcsetup{pairsep={ and~},lastsep={, and~}}
\zcRefTypeSetup{equation}{Name-sg=,Name-pl=,refbounds={(,,,)}}
\AddToHook{env/equation/begin}{\zcsetup{countertype={equation=equation}}}
\AddToHook{env/align/begin}{\zcsetup{countertype={equation=equation}}}
\zcRefTypeSetup{item}{Name-sg=,Name-pl=,refbounds={(,,,)}}
\newlist{conenum}{enumerate}{1}
\setlist[conenum,1]{label=(\roman*),ref=\roman*}
\zcRefTypeSetup{conenumi}{Name-sg=,Name-pl=,refbounds={(,,,)}}

\NewDocumentCommand{\newzctheorem}{momo}{\IfValueTF{#4}
  {\newtheorem{#1}{#3}[#4]}
  {\IfValueTF{#2}
    {\AddToHook{env/#1/begin}{\zcsetup{countertype={#2=#1}}}\newtheorem{#1}[#2]{#3}}
    {\newtheorem{#1}{#3}}}}
\numberwithin{equation}{section}
\theoremstyle{plain}
\newzctheorem{Theorem}{Theorem}

\zcRefTypeSetup{Theorem}{Name-sg=Theorem,Name-pl=Theorems}
\newzctheorem{theorem}[equation]{Theorem}
\newzctheorem{proposition}[equation]{Proposition}
\newzctheorem{lemma}[equation]{Lemma}
\newzctheorem{corollary}[equation]{Corollary}

\theoremstyle{definition}
\newzctheorem{definition}[equation]{Definition}
\newzctheorem{assumption}[equation]{Assumption}
\zcRefTypeSetup{assumption}{Name-sg=Assumption,Name-pl=Assumptions}
\newzctheorem{example}[equation]{Example}
\newzctheorem{question}[equation]{Question}
\zcRefTypeSetup{question}{Name-sg=Question,Name-pl=Questions}

\theoremstyle{remark}
\newzctheorem{remark}[equation]{Remark}
\newzctheorem{slogan}[equation]{Slogan}
\zcRefTypeSetup{slogan}{Name-sg=Slogan,Name-pl=Slogans}

\let\oldSS\SS\let\SS\relax
\let\oldtop\top\let\top\relax

\newcommand{\FF}{\mathbf{F}}
\newcommand{\NN}{\mathbf{N}}
\newcommand{\ZZ}{\mathbf{Z}}
\newcommand{\QQ}{\mathbf{Q}}
\newcommand{\CC}{\mathbf{C}}
\newcommand{\RR}{\mathbf{R}}
\newcommand{\SS}{\mathbf{S}}

\newcommand{\GG}{\mathbb{G}}

\newcommand{\AH}{\textnormal{AH}}

\newcommand{\Lic}{\textnormal{Lic}}
\newcommand{\Zar}{\textnormal{Zar}}

\newcommand{\con}{\textnormal{con}}
\newcommand{\et}{\textnormal{ét}}
\newcommand{\hol}{\textnormal{hol}}

\newcommand{\lis}{\textnormal{lis}}
\newcommand{\mot}{\textnormal{mot}}
\newcommand{\sm}{\textnormal{sm}}

\newcommand{\top}{\textnormal{top}}
\newcommand{\Bet}{\textnormal{Bet}}

\newcommand{\CH}{\operatorname{CH}}
\newcommand{\Cau}{\operatorname{Cau}}
\newcommand{\D}{\operatorname{D}}
\newcommand{\GL}{\operatorname{GL}}
\newcommand{\Idem}{\operatorname{Idem}}

\newcommand{\ch}{\operatorname{ch}}
\newcommand{\KU}{\operatorname{KU}}

\newcommand{\Pic}{\operatorname{Pic}}

\newcommand{\Spec}{\operatorname{Spec}}
\newcommand{\Sp}{\operatorname{Sp}}
\newcommand{\Tor}{\operatorname{Tor}}
\newcommand{\cdim}{\operatorname{cdim}}
\newcommand{\fil}{\operatorname{fil}}
\newcommand{\gr}{\operatorname{gr}}
\newcommand{\id}{\operatorname{id}}
\newcommand{\ku}{\operatorname{ku}}

\newcommand{\pr}{\operatorname{pr}}
\newcommand{\supp}{\operatorname{supp}}

\newcommand{\X}{\mathord{-}}

\newcommand{\Cat}[1]{\mathsf{#1}}
\newcommand{\shf}[1]{\mathcal{#1}}
\newcommand{\Cls}[1]{\mathscr{#1}}

\title{The Oka principle for étale Chow groups}
\author{Ko Aoki}
\address{Department of Mathematical Sciences,
  University of Copenhagen, Denmark
}
\email{aoki@math.ku.dk}
\date{\today}

\begin{document}

\begin{abstract}
  The celebrated theorems of Shilov, Arens–Royden, and Forster
  give direct descriptions
  of the first three integral cohomology groups
  of the Gelfand spectrum of a commutative complex Banach algebra.
  In his 1974 ICM address,
  Taylor asked whether the higher cohomology groups admit descriptions
  in terms of the underlying ring.
  We give a solution to this question in even degrees:
  The étale (aka Lichtenbaum) Chow group
  in every codimension
  is canonically isomorphic
  to the corresponding even integral cohomology group
  of the Gelfand spectrum.
\end{abstract}

\maketitle

\section{Introduction}\label{s:intro}

In this paper,
rings are by default unital and commutative.
A Banach algebra means a (unital commutative) complex Banach algebra.
For such an algebra~\(A\),
its \emph{Gelfand spectrum}~\(\Sp(A)\) is
the compactum (i.e., compact Hausdorff space) of characters~\(A\to\CC\).
Cohomology of local compacta
is understood in the sheaf (or equivalently,
Čech) cohomology sense.

\subsection{Taylor’s question}\label{ss:taylor}

According to Grauert–Remmert~\cite[page~145]{GrauertRemmert79}, the Oka
principle says that “on a reduced\footnote{The reducedness assumption is not relevant in this paper.
} Stein space, problems which can be
cohomologically formulated have only topological obstructions.”  Its
best-known example is the following:

\begin{theorem}[Grauert]\label{grauert}
  Let~\(U\) be a Stein space. The comparison map
  \begin{equation*}
    H^{1}_{\hol}(U;\GL_{r}(\CC))
    \to
    H^{1}_{\con}(U;\GL_{r}(\CC))
  \end{equation*}
  from holomorphic to continuous nonabelian cohomology is an
  isomorphism for \(r\geq0\).
\end{theorem}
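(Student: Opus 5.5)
The statement is Grauert's classical Oka principle, which the paper only cites. I would prove it by the standard route through Cartan's Theorems A and B together with a homotopy-theoretic gluing argument. The plan follows the Grauert–Cartan approach, reorganized in the modern language of elliptic or flexible fibrations.

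\textbf{Reinterpretation.} Both sides classify principal \(\GL_r(\CC)\)-bundles on~\(U\): holomorphic ones up to holomorphic isomorphism, and topological ones up to topological isomorphism. So the statement splits into two parts.
\begin{itemize}
\item[(a)] Surjectivity: every topological principal bundle admits a holomorphic structure.
\item[(b)] Injectivity: two holomorphic bundles that are topologically isomorphic are holomorphically isomorphic.
\end{itemize}

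\textbf{Reducing (b) to (a)-type statements.} An isomorphism between holomorphic bundles \(P\) and \(Q\) is a section of the holomorphic fiber bundle \(\mathrm{Iso}(P,Q)\to U\), whose fiber is \(\GL_r(\CC)\). Hence both (a) and (b) follow from one assertion. Let \(E\to U\) be a holomorphic fiber bundle whose fiber is a complex Lie group, or a homogeneous space of one, with structure group acting transitively. Then the inclusion of holomorphic sections into continuous sections is a weak homotopy equivalence, and in particular a bijection on \(\pi_0\). For (a), one applies this to a parametrized version on \(U\times[0,1]\), or equivalently one first realizes the bundle as a pullback of a holomorphic bundle over a Grassmannian-type space along a continuous map and then deforms that map.

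\textbf{Proving the section-deformation statement.} Exhaust \(U\) by an increasing sequence of compact holomorphically convex sets \(K_1\subset K_2\subset\cdots\), for instance sublevel sets of a strictly plurisubharmonic exhaustion function. The induction step has two parts.
\begin{itemize}
\item \emph{Local deformation.} On a small Stein open set where \(E\) is trivial, a continuous section can be deformed to a holomorphic one. For a Lie group \(G\) this follows from the exponential map: deform a continuous map to \(G\) near a given holomorphic one into holomorphic maps.
\item \emph{Gluing (Cartan's lemma).} Take Cartan pairs \((A,B)\) with \(A\cap B\) Stein and \(A\cup B\) again holomorphically convex. If a holomorphic \(\gamma\colon A\cap B\to G\) is close enough to the identity, it factors as \(\gamma=\alpha\beta^{-1}\) with \(\alpha\) holomorphic on~\(A\) and \(\beta\) holomorphic on~\(B\). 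This is proved by writing \(\gamma=\exp(c)\) and solving an additive Cousin problem, using Theorem B (\(H^1(A\cup B;\mathcal{O})=0\)) together with bounded linear splitting and a Newton-type iteration.
\end{itemize}
Approximation by Oka–Weil (Runge) lets one first make the two local holomorphic sections homotopic on \(A\cap B\), and then make the transition close to the identity. The homotopy parameter is carried along throughout, so the same argument yields injectivity on \(\pi_0\). Passing to the limit of the exhaustion gives a global holomorphic section homotopic to the given continuous one.

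\textbf{Main obstacle.} The hardest step is the nonlinear Cartan splitting lemma with uniform estimates. The multiplicative factorization must be obtained with control of norms so that the inductive corrections converge on all of~\(U\). I would handle this with the implicit function theorem in Banach spaces of bounded holomorphic functions on slightly shrunk domains, losing a little of the domain at each step as in Grauert's original argument.
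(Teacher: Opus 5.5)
\textbf{Context.} The paper gives no proof of this statement. It quotes it as classical (Oka for $r=1$, Grauert in general) purely as motivation, so your outline has to stand on its own as a proof of Grauert's theorem.

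\textbf{Your reductions are sound.} Write $G=\GL_r(\CC)$. For injectivity you only need surjectivity on $\pi_0$ for sections of $\mathrm{Iso}(P,Q)$, which is a bundle with fiber $(G\times G)/\Delta G\cong G$. For surjectivity the Grassmannian route works: write the bundle as $f^{*}$ of the tautological bundle for a continuous $f\colon U\to\operatorname{Gr}_r(\CC^N)=\GL_N(\CC)/P$, then deform $f$ to a holomorphic map. This needs a bound on $\dim U$ or an extra reduction, since a connected Stein space can have unbounded dimension. The alternative ``parametrized version on $U\times[0,1]$'' does not make sense as stated.

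\textbf{The gap is in the section-deformation theorem,} at the step you dispatch with ``approximation by Oka--Weil (Runge)''.

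\emph{Nonlinear approximation.} To make the transition $\gamma=b^{-1}a$ on $A\cap B$ close to the identity, you must approximate the $G$-valued map $\gamma$ by one holomorphic on $B$. For $r\geq2$, the usual way to bring scalar Oka--Weil to bear is a factorization $\gamma=\exp(c_1)\cdots\exp(c_k)$ with $c_j$ holomorphic and $\mathfrak{gl}_r$-valued near $A\cap B$. Such a factorization amounts to a \emph{holomorphic} null-homotopy of $\gamma$. Knowing that $A\cap B$ is Stein and that $\gamma$ is continuously null-homotopic does not give you one. Upgrading a continuous homotopy between holomorphic maps to a holomorphic one is itself part of the Oka principle you are proving. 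It comes for free when $A\cap B$ and $B$ are convex in a chart trivializing $E$, by contracting linearly. This is why the standard proof uses special convex bumps rather than general Cartan pairs.

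\emph{Critical levels.} Bumps with $B$ and $A\cap B$ both contractible never change the homotopy type of $A$. So they cannot cross a critical level of index $k\geq1$ of the exhaustion, where $\{\rho\le c'\}\simeq\{\rho\le c\}\cup e^{k}$. This is exactly where the given continuous section has to enter, and your sketch has no mechanism for it. The usual remedy has three steps:
\begin{enumerate}
\item attach a totally real $k$-disc $M$ to $A$ so that $A\cup M$ is holomorphically convex;
\item extend the holomorphic section over $M$ by the deformed continuous one;
\item approximate on $A\cup M$ by Mergelyan-type approximation, then resume noncritical bumping.
\end{enumerate}
On singular Stein spaces you also need an induction over a stratification, or Grauert's original scheme.

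\emph{Homogeneous fibers.} Your gluing sketch treats only $G$-valued sections. The Grassmannian in (a) is a homogeneous fiber, where you must first write one section as $\gamma\cdot b$ with $\gamma$ near the identity before Cartan's lemma applies.

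These steps, not the multiplicative Cartan splitting you single out as the main obstacle, carry the substance of the theorem. Once bounded additive splitting on a Cartan pair is available, that splitting is a routine implicit-function argument.
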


The case~\(r=1\) is due to Oka;
Grauert proved the statement in arbitrary rank.
This immediately implies the following:

\begin{corollary}\label{xsyq8z}
  Let~\(K\) be a Stein compactum.
  Forgetting the holomorphic structure induces an isomorphism
  \begin{equation*}
    K_{0}(\Cls{O}(K))\simeq
    K_{0}(\Cls{C}(K))\mathrel{(\simeq}
    \KU^{0}(K)),
  \end{equation*}
  where \(\Cls{O}(K)\) denotes the ring
  of global sections.
\end{corollary}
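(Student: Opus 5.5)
The plan is to pass from the compactum~\(K\) to its Stein neighbourhoods, apply \cref{grauert} on each of them, and then take a colimit. Throughout, \(\Cls{O}(K)=\injlim_{U\supseteq K}\Cls{O}(U)\), where \(U\) runs over open neighbourhoods of~\(K\). Since \(K\) is a Stein compactum, these may be taken to be Stein, and they form a cofinal system. Similarly, \(\Cls{C}(K)\) denotes continuous complex functions on~\(K\).

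The first step expresses \(K_{0}\) in terms of vector bundles. For any ring, \(K_{0}\) is the group completion of the monoid of isomorphism classes of finitely generated projective modules. Finitely generated projective modules are the same as idempotent matrices up to conjugacy and stabilization. For this reason the monoid commutes with filtered colimits of rings. This gives
\[
\mathrm{Proj}(\Cls{O}(K))\simeq\injlim_{U}\mathrm{Proj}(\Cls{O}(U)),
\]
and on the continuous side \(\mathrm{Proj}(\Cls{C}(K))\) is the monoid of topological vector bundles on~\(K\) by Swan's theorem. On a Stein space~\(U\), finitely generated projective \(\Cls{O}(U)\)-modules correspond to holomorphic vector bundles on~\(U\), via Cartan's Theorems~A and~B applied to locally free sheaves. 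Here one must take care that bundles have bounded rank on each component. Alternatively, one restricts to relatively compact neighbourhoods so that the ranks are finite.

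Next comes the application of Grauert's theorem. By \cref{grauert}, holomorphic rank~\(r\) bundles on~\(U\) up to isomorphism biject with continuous ones, that is, with \(H^{1}_{\con}(U;\GL_{r}(\CC))\). This bijection is compatible with direct sums and with restriction along \(U'\subseteq U\). Taking the colimit over the Stein neighbourhoods~\(U\) therefore gives
\[
\injlim_{U}\mathrm{Vect}_{\hol}(U)\simeq\injlim_{U}\mathrm{Vect}_{\con}(U).
\]
It then remains to identify \(\injlim_{U}\mathrm{Vect}_{\con}(U)\) with \(\mathrm{Vect}_{\con}(K)\). This is the continuity of nonabelian Čech cohomology with coefficients in the locally contractible group \(\GL_{r}(\CC)\) for a compact subset of a paracompact space. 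Concretely, every bundle on~\(K\) extends to a neighbourhood, since clutching functions into \(\GL_{r}(\CC)\) extend by Tietze together with the openness of \(\GL_{r}\) inside the matrices. Likewise, two extensions that are isomorphic over~\(K\) become isomorphic over a smaller neighbourhood. This gives an isomorphism of monoids \(\mathrm{Proj}(\Cls{O}(K))\simeq\mathrm{Proj}(\Cls{C}(K))\), and group completion yields the claim. The last identification \(K_{0}(\Cls{C}(K))\simeq\KU^{0}(K)\) is the standard Swan/Atiyah description of topological \(K\)-theory of a compactum.

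I expect the main obstacle to be the colimit bookkeeping. The map in the statement is the forgetful map \(\Cls{O}(K)\to\Cls{C}(K)\), so one must check that the composite of the isomorphisms above is exactly the map it induces on \(K_{0}\). For injectivity, one must also check that an isomorphism of restricted continuous bundles near~\(K\) can be realized holomorphically on some smaller Stein neighbourhood. \cref{grauert} applied on that neighbourhood handles the latter point, because it asserts bijectivity on isomorphism classes rather than just surjectivity.
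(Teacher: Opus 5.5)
Your proposal is correct and is exactly the argument the paper has in mind when it says the corollary follows immediately from \cref{grauert}: pass to a cofinal system of Stein neighbourhoods of~\(K\), apply Grauert's bijection there (using its injectivity on a smaller Stein neighbourhood), and take the filtered colimit, with Swan's theorem and continuity of vector bundles on the topological side. One small caution: Tietze-extending transition functions does not by itself preserve the cocycle identity off~\(K\), so the extension step is better justified another way, either by extending an idempotent matrix in \(M_n(\Cls{C}(K))\) and correcting it near~\(K\) by a spectral (Riesz) projection, or by citing tautness of compact subsets for \(H^1\) with \(\GL_r\)-coefficients.
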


The corresponding comparison
for Banach algebras holds in every degree,
when we consider topological \(K\)-theory;
see~\cite{Novodvorskii67,BraddHigson21}:

\begin{theorem}[Novodvorskii]\label{novodvorskii}
  The Gelfand transform~\(A\to\Cls{C}(\Sp(A))\) induces an
  isomorphism
  \begin{equation*}
    K^{\top}_{*}(A)\simeq
    K^{\top}_{*}(\Cls{C}(\Sp(A)))\mathrel{(\simeq}
    \KU^{-{*}}(\Sp(A)))
  \end{equation*}
  for \({*}\geq0\).
\end{theorem}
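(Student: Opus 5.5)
The plan is to reduce the comparison to degrees \({*}=0,1\) by Bott periodicity, and then prove those two cases by the classical Shilov–Arens–Royden technique: approximate \(A\) by finitely generated subalgebras whose joint spectra are polynomially convex compacta in \(\CC^{n}\), and apply the Oka principle on their Stein neighborhoods.

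\textbf{Reduction to \({*}=0,1\).} Topological \(K\)-theory of Banach algebras is Bott periodic, as is \(\KU\). The Gelfand transform is a continuous unital homomorphism, so it induces maps compatible with the Bott maps. It therefore suffices to treat \({*}=0\) (idempotents and projective modules) and \({*}=1\) (the group \(\pi_{0}\) of \(\GL_{\infty}\)).

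\textbf{Approximation by joint spectra.} Write \(A\) as the filtered colimit of its closed subalgebras generated by finitely many elements \(a_{1},\dots,a_{n}\). Both sides should commute with this colimit. For \(K^{\top}\) this holds because idempotents and invertible matrices over \(A\) can be perturbed into a dense subalgebra, and homotopies likewise. For \(\KU^{*}\) it holds because \(\Sp(A)=\projlim\sigma(a_{1},\dots,a_{n})\) and the \(\KU\)-cohomology (like Čech cohomology) of compacta is continuous under inverse limits. By Arens–Calderón, \(\sigma(a_{1},\dots,a_{n})=K\) is polynomially convex and hence a Stein compactum. Waelbroeck/Arens–Calderón holomorphic functional calculus then gives a homomorphism \(\Cls{O}(K)\to A\) whose composite with the Gelfand transform is restriction \(\Cls{O}(K)\to\Cls{C}(K)\to\Cls{C}(\Sp(A))\). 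Using the colimit description again, we may replace \(A\) by \(\Cls{O}(K)\) up to filtered colimits: every element of \(K^{\top}_{*}(A)\) comes from some such \(K\), and relations can be witnessed at a later stage.

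\textbf{Stein case.} For \({*}=0\), \cref{xsyq8z} gives \(K_{0}(\Cls{O}(K))\simeq\KU^{0}(K)\). For \({*}=1\), use Grauert's theorem in its homotopy form (Grauert's Oka principle for maps into \(\GL_{r}(\CC)\)): on a Stein neighborhood \(U\) of \(K\), holomorphic maps \(U\to\GL_{r}(\CC)\) have \(\pi_{0}\) equal to continuous homotopy classes, so \(\pi_{0}\GL_{\infty}(\Cls{O}(K))\simeq[K,\GL_{\infty}(\CC)]=\KU^{-1}(K)\). These isomorphisms are compatible with the maps above and pass to the colimit, which gives surjectivity and injectivity for \(A\).

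\textbf{Main obstacle.} The hard step is injectivity: showing that a relation holding in \(\KU^{*}(\Sp(A))\) already holds in \(K^{\top}_{*}(A)\). A homotopy over \(\Sp(A)\) must be realized over some finite-stage \(K\) and then pulled back holomorphically. I would handle this by the Arens–Royden trick. First, use continuity of \(\KU\) to push the homotopy to a neighborhood of a finite-stage joint spectrum. Then apply Grauert's theorem on a Stein neighborhood to make it holomorphic. Finally, transport it to \(A\) via the functional calculus.
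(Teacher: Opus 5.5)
The paper does not prove \cref{novodvorskii}. It quotes it from \cite{Novodvorskii67,BraddHigson21} and uses it as a black box in \cref{main-q-1,main-q-2}, so there is no proof to match yours against.

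Your outline is the classical argument and it is sound: holomorphic functional calculus on a polynomially convex joint spectrum, Grauert's Oka principle for bundles and for \(\GL_r(\CC)\)-valued maps on Stein neighborhoods, and continuity of \(\KU\). It has the same skeleton as the paper's proof of \cref{xhppcz}. Both reduce to closed topologically finitely generated subalgebras \(A_i\), realize \(\Sp(A_i)\) as a polynomially convex compactum, apply an Oka-type theorem to \(\Cls{O}(K)\), and use continuity on the topological side.

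The difference is how one passes between \(\Cls{O}(K)\) and \(A\). You use the calculus homomorphism \(\Cls{O}(K)\to A_i\) together with the Banach topology, which is what lets you treat \(\pi_0\GL\). The paper instead goes through \(\Cls{A}(X)\) using henselian invariance and Gabber's continuity (\cref{dense,holomorphic}). That route sees only the underlying ring, so it applies to finitary henselian-invariant functors such as \(H^r_{\et}(-;\Lambda)\), but not to \(K^{\top}_1\) as such.

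Two steps need tightening.

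First, \(K\) should be the joint spectrum of the generators computed in \(A_i\), i.e.\ \(\Sp(A_i)\). Its polynomial convexity is the elementary argument in the proof of \cref{xhppcz}, not Arens--Calderón. The joint spectrum computed in \(A\) itself need not be polynomially convex (e.g.\ \(z\in\Cls{C}(S^1)\)).

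Second, ``every class in \(K^{\top}_*(A)\) comes from some \(\Cls{O}(K)\)'' does not follow from the colimit description. We have \(A=\bigcup_i A_i\), but \(\Cls{O}(K_i)\to A_i\) only has dense image, so you need an explicit lifting argument.
\begin{itemize}
\item For \(g\in\GL_r(A)\), include the entries of \(g\) and \(g^{-1}\) among the generators. The coordinate matrix \(Z\) then maps to \(g\). Since \(\det g\in A_i^{\times}\), \(\det Z\) is nonvanishing on \(K=\Sp(A_i)\), so \(Z\) is invertible near \(K\).
\item For an idempotent \(e\), the coordinate matrix is idempotent only on \(K\). Replace it by its Riesz projection \(\frac{1}{2\pi i}\oint_{\lvert\lambda-1\rvert=1/2}(\lambda-Z)^{-1}\,d\lambda\) on a neighborhood where the pointwise spectrum of \(Z\) avoids that circle. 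This still maps to \(e\), because the calculus is a continuous homomorphism.
\end{itemize}
Choose the generating sets increasing, so that transition maps are pullbacks along coordinate projections and are compatible with the calculus. Your ``main obstacle'' then becomes formal. The map \(\injlim_i K_*(\Cls{O}(K_i))\to\KU^{-*}(\Sp(A))\) is bijective by Oka--Grauert and continuity, reading \(K_1\) as your holomorphic \(\pi_0\GL_\infty\). It factors through \(K^{\top}_*(A)\) via a surjection, so \(K^{\top}_*(A)\to\KU^{-*}(\Sp(A))\) is bijective.
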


Taylor’s 1974 ICM address~\cite{Taylor75}
considered questions of this kind.
There are classical direct descriptions
of the first three integral cohomology groups:

\begin{theorem}[Shilov, Arens–Royden, Forster]\label{classical}
  For a Banach algebra~\(A\),
  we have canonical isomorphisms
  \begin{align*}
    \langle \Idem(A)\rangle&\simeq H^{0}_{\Bet}(\Sp(A);\ZZ),&
    A^{\times}/\exp(A)&\simeq H^{1}_{\Bet}(\Sp(A);\ZZ),&
    \Pic(A)&\simeq H^{2}_{\Bet}(\Sp(A);\ZZ),
  \end{align*}
  where \(\langle\Idem(A)\rangle\)
  is the additive subgroup of~\(A\)
  generated by its idempotents,
  and \(\exp(A)\) is the image
  of \({\exp}\colon A\to A^{\times}\).
\end{theorem}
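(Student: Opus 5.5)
The plan is to prove all three isomorphisms at once by comparing two sheaves on the compactum \(X=\Sp(A)\). Let \(\shf{O}\) be the sheaf of germs of "holomorphic functions": its stalk at a closed subset is the colimit of the Banach algebras attached to neighbourhoods via the Arens–Calderón/Waelbroeck holomorphic functional calculus. Let \(\shf{C}\) be the sheaf of continuous complex-valued functions. The key input, which I will assume (Arens–Calderón, Shilov idempotent theorem), is twofold. First, \(H^{q}(X;\shf{O})=0\) for \(q\geq1\). Second, global sections of \(\shf{O}\) and of \(\shf{O}^{\times}\) are controlled by \(A\) modulo the relevant equivalence. Concretely, \(A\to\Gamma(X;\shf{O})\) induces bijections on idempotents and on \(\pi_0\) of units, and an isomorphism on Picard groups.

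The degree \(0\) statement comes from \(H^0(X;\ZZ)=\) locally constant \(\ZZ\)-valued functions, which is the group generated by characteristic functions of clopen sets. Shilov's idempotent theorem lifts each clopen subset of \(X\) to a unique idempotent of \(A\), using the holomorphic functional calculus applied to the function that is \(1\) near the clopen set and \(0\) near its complement. This gives \(\langle\Idem(A)\rangle\simeq H^0\). The map is injective because the Gelfand transform is injective on idempotents: two idempotents with the same transform differ by an idempotent with spectrum \(\{0\}\), and such an idempotent is \(0\).

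Degrees \(1\) and \(2\) come from the exponential sequence \(0\to\ZZ\to\shf{O}\xrightarrow{\exp(2\pi i\,\cdot)}\shf{O}^{\times}\to0\) on \(X\). It is exact because locally one can take logarithms via the functional calculus. The long exact sequence together with the vanishing of \(H^{q}(X;\shf{O})\) for \(q=1,2\) gives two things. First, \(\Gamma(\shf{O}^\times)/\exp\Gamma(\shf{O})\simeq H^1(X;\ZZ)\). Second, \(H^1(X;\shf{O}^{\times})\simeq H^2(X;\ZZ)\).

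It remains to identify these groups with the ring-theoretic objects for \(A\). For \(H^1\), I would use Arens–Royden in its standard form. A unit \(a\in A^\times\) whose Gelfand transform has a continuous logarithm lies in \(\exp(A)\). Conversely, every continuous logarithm class is realised, by approximating a function on \(X\) by a holomorphic function of finitely many generators, with spectrum in a polynomially convex set. This reduces the question to \(\widehat{K}\subset\CC^n\), where Oka–Cartan theory for Stein neighbourhoods applies. For \(\Pic\), I would identify \(\Pic(A)\) with \(H^1(X;\shf{O}^\times)\). This amounts to Forster's argument: a finitely generated projective module of rank one is determined by holomorphic transition data on a neighbourhood of \(X\) in a finite-dimensional joint spectrum. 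Grauert's \cref{grauert} for \(r=1\) on Stein neighbourhoods, passed to the limit over such neighbourhoods, gives both surjectivity and injectivity.

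The main obstacle is this last step. We must pass from \(A\) to a Stein neighbourhood of a projection of \(\Sp(A)\) into some \(\CC^n\), and check that the colimit over finitely generated subalgebras commutes with forming \(\Pic\) and \(\pi_0(A^\times)\). My first attempt would be to write \(A\) as a filtered colimit of its closed subalgebras generated by finitely many elements, up to the holomorphic functional calculus. Then \(\Sp(A)=\projlim\) of the joint spectra, and continuity of Čech cohomology under inverse limits of compacta matches the two sides.
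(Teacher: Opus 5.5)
The paper does not prove this theorem. It quotes it from Shilov, Arens, Royden and Forster, with Taylor's survey, so the only comparison is with those classical arguments. Your architecture is the Royden/Forster route: an exponential sequence for a sheaf of holomorphic germs with Cartan~B-type vanishing, reduction to polynomially convex joint spectra, and continuity of Čech cohomology. As written, however, almost all the substance sits in what you assume, and several of the steps you actually write down do not work.

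\emph{Degree~\(1\) is circular.} You invoke ``Arens--Royden in its standard form'': a unit whose Gelfand transform has a continuous logarithm lies in \(\exp(A)\). That is the injectivity half of the statement being proved.

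\emph{The surjectivity mechanism fails.} ``Approximating a function on~\(X\) by a holomorphic function of finitely many generators'' is not available, because Gelfand transforms are in general not dense in \(\Cls C(X)\); the disc algebra already shows this.

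\emph{The \(\Pic\) step is missing its key comparison.} Your assumption only gives \(\Pic(A)\simeq\Pic(\Gamma(X;\shf O))\), while the exponential sequence computes \(H^{1}(X;\shf O^{\times})\). Comparing these two is a Theorem~A statement, and it is precisely Forster's contribution. Moreover, your sheaf \(\shf O\), defined via ``stalks at closed subsets'', is not pinned down. So one cannot tell whether the vanishing theorem, the comparison with~\(A\), and your Stein-neighbourhood computation of \(\injlim_{U}H^{1}(U;\shf O_{U}^{\times})\) all concern the same object.

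\emph{Degree~\(0\) injectivity is incomplete.} Injectivity on idempotents does not by itself give injectivity on the subgroup they generate. You must first rewrite an element as \(\sum_j m_jf_j\) with pairwise orthogonal nonzero idempotents~\(f_j\).

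A clean repair is to drop the intrinsic sheaf on \(\Sp(A)\) and argue as the paper does for étale cohomology in the proof of \cref{xhppcz}. Write \(A\) as the union of its closed topologically finitely generated subalgebras~\(A_i\). Then \(X_i=\Sp(A_i)\subset\CC^{n}\) is polynomially convex, so it has a basis of Stein neighbourhoods~\(U\), and \(X=\projlim_iX_i\).

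For \(\Pic\), and likewise idempotents, use that these functors are finitary and henselian invariant. Then \cref{dense,holomorphic} give \(\Pic(A_i)\simeq\Pic(\Cls O(X_i))=\injlim_U\Pic(\Cls O(U))\). Theorems~A and~B with the exponential sequence identify this with \(\injlim_UH^{2}(U;\ZZ)\simeq H^{2}(X_i;\ZZ)\). Finally pass to the colimit over~\(i\) by continuity of Čech cohomology.

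Degree~\(1\) cannot be handled this way, since \(A^{\times}/\exp(A)\) is not a functor of the underlying ring (\cref{ar}). Instead prove \(A_i^{\times}/\exp(A_i)\simeq\Cls O(X_i)^{\times}/\exp(\Cls O(X_i))\) by hand. This uses two facts: polynomials in the generators are dense in~\(A_i\), and elements close to~\(1\) are exponentials, both in~\(A_i\) and on a neighbourhood of~\(X_i\). Then identify the right side with \(H^{1}(X_i;\ZZ)\) via Oka--Cartan on Stein neighbourhoods. Finally use \(A^{\times}/\exp(A)=\injlim_iA_i^{\times}/\exp(A_i)\) together with Čech continuity. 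This yields both injectivity and surjectivity without presupposing Arens--Royden.
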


The original references
are~\cite{Shilov53,Arens63,Royden63,Forster74}.
See also~\cite{Taylor76} for Taylor’s account.
The following is~\cite[Problem~1]{Taylor75}:

\begin{question}[Taylor]\label{xqo4z1}
  Is there a description of
  \(H^{n}_{\Bet}(\Sp(A);\ZZ)\) for \(n\geq3\)
  in terms of the structure of~\(A\)?
\end{question}

As Taylor noted on~\cite[page~117]{Taylor75},
this problem has a rational solution:
Novodvorskii’s description of topological \(K\)-theory followed by the
Chern character recovers the direct sums of the even and odd rational
cohomology groups, and Adams operations separate the individual degrees.
This includes both parities, but its odd part uses
the Banach topology through the connected components of~\(\GL_n(A)\).
An integral description remained open.

We formulate a mathematical question
that is more restricted:

\begin{question}\label{taylor}
  Let \(n\geq0\) be an integer.
  Does there exist a
  functor~\(F\) from rings to
  abelian groups preserving filtered colimits,
  together with functorial isomorphisms
  \begin{equation*}
    F(A)\simeq H^n_{\Bet}(\Sp(A);\ZZ)
  \end{equation*}
  when restricted to Banach algebras~\(A\)?
\end{question}

\begin{remark}\label{ar}
  The Shilov and Forster theorems in \cref{classical}
  answer \cref{taylor} for \(n=0\), \(2\), respectively,
  whereas the Arens–Royden theorem
  does not for \(n=1\):
  The subgroup~\(\exp(A)\) uses the Banach topology
  and is not defined from
  the underlying ring alone.
\end{remark}

For a Banach algebra~\(A\), note
the pattern
\begin{align*}
  \CH^{0}(A)&\simeq H^{0}_{\Zar}(\Spec A;\ZZ)
  \simeq\langle\Idem(A)\rangle,&
  \CH^{1}(A)&\simeq\Pic(A).
\end{align*}
However,
this pattern with ordinary Chow groups fails already in weight two,
as we explain in \cref{s:ordinary}.
We instead use the étale version:

\begin{definition}\label{etale}
  For a smooth \(\QQ\)-algebra and \(q\geq0\),
  we consider \(\ZZ(q)^{\et}\),
  the étale sheafification
  of the classical (aka Bloch) motivic cohomology.
  We left Kan extend this functor to obtain
  \(\ZZ(q)^{\Lic}\colon\Cat{Ring}_{\QQ}\to\D(\ZZ)\).
  We define the \emph{étale Chow group}
  as its \(\pi_{-2q}\), i.e.,
  \begin{equation*}
    \CH^q_{\et}(A)=H_{\Lic}^{2q}(\Spec A;\ZZ(q)).
  \end{equation*}
\end{definition}

Our main result is the following,
which answers \cref{taylor}
for every even integer~\(n\):

\begin{Theorem}\label{main}
  Let~\(A\) be a Banach algebra.
  There is a functorial isomorphism
  \begin{equation*}
    \CH_{\et}^{q}(A)
    \simeq
    H^{2q}_{\Bet}(\Sp(A);\ZZ)
  \end{equation*}
  for \(q\geq0\).
\end{Theorem}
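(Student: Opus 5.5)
The plan is to split Lichtenbaum motivic cohomology into a rational part and a torsion part, and to compare each separately with the integral cohomology of \(\Sp(A)\).

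\textbf{Torsion part.} For each prime \(\ell\), the cofiber of multiplication by \(\ell^k\) on \(\ZZ(q)^{\et}\) is \(\mu_{\ell^k}^{\otimes q}[0]\) on smooth \(\QQ\)-algebras. This uses the Beilinson–Lichtenbaum/Bloch–Kato comparison together with étale sheafification. Both sides commute with left Kan extension, so the corresponding statement holds for \(\ZZ(q)^{\Lic}\) after checking that étale cohomology with finite coefficients is left Kan extended from smooth algebras. In characteristic zero this follows from Gabber's affine analogue of proper base change and the fact that étale cohomology commutes with filtered colimits of rings, together with Popescu/Artin approximation, or simply from the fact that it is rigid and finitary. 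Hence \(\ZZ(q)^{\Lic}/\ell^k\simeq R\Gamma_{\et}(\Spec A;\mu_{\ell^k}^{\otimes q})\).

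For a Banach algebra \(A\), I will then prove a Gabber-type rigidity statement:
\begin{equation*}
  R\Gamma_{\et}(\Spec A;\ZZ/m)\simeq R\Gamma(\Sp(A);\ZZ/m).
\end{equation*}
The route is to write \(A\) as a filtered colimit, or approximate it by finitely generated subalgebras, and to use the Arens–Calderón functional calculus. Finite étale \(A\)-algebras correspond to finite covering spaces of \(\Sp(A)\); this is the Arens–Royden/Novodvorskii philosophy, and it is the Oka principle in degree \(0\) for étale sheaves. Étale cohomology of the henselian local pieces should then match Čech cohomology of the compactum. Concretely, I would reduce to \(A=\Cls{O}(K)\) for a polynomially convex compactum \(K\subset\CC^n\), since every Banach algebra is a filtered colimit of such algebras up to an isomorphism that does not change \(\Sp\) or étale cohomology. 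For these, I would use Stein neighborhoods and the known comparison between étale and analytic cohomology for Stein spaces, which is an Oka principle for locally constant sheaves.

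\textbf{Rational part.} Rationally, \(\ZZ(q)^{\Lic}\otimes\QQ\) agrees with the Adams eigenspace of algebraic \(K\)-theory, \(\gr^q K(A)_\QQ\). This seems hard to compare directly with \(H^{2q}(\Sp(A);\QQ)\), because algebraic \(K\)-theory of Banach algebras is not homotopy invariant rationally. Instead I would use an arithmetic-square argument. The fiber of \(\ZZ(q)^{\Lic}\to\prod_\ell \ZZ(q)^{\Lic}{}^\wedge_\ell\) is rational, so it suffices to control \(\pi_{-2q}\) and \(\pi_{-2q+1}\) of the rationalization and of the \(\QQ\otimes\prod_\ell\) term. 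The cleaner route is to show the following claim.

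\textbf{Claim.} The map \(\ZZ(q)^{\Lic}(A)\to\ZZ(q)^{\Lic}(A)^\wedge\) to the profinite completion induces an isomorphism on \(H^{2q}\), and the completed side has no \(\lim^1\) issues.

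Granted this, \(H^{2q}\) of the completion is \(H^{2q}(\Sp(A);\hat\ZZ)\). I would then identify \(H^{2q}(\Sp(A);\ZZ)\) inside it by comparing with the rational \(K\)-theory description. Forster's and Shilov's cases (\cref{classical}) serve as consistency checks in weights \(0\) and \(1\).

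The needed degree vanishing comes from two facts. First, \(\ZZ(q)^{\Lic}\otimes\QQ\) is the Zariski/Nisnevich motivic cohomology left Kan extended, so its \(H^{2q}\) is \(\CH^q(A)_\QQ\) and its \(H^{>2q}\) vanishes. Second, \(\CH^q(A)_\QQ\) should be uniquely divisible-compatible with the torsion-free topological part via the cycle class map to \(H^{2q}(\Sp(A);\QQ)\), using Novodvorskii (\cref{novodvorskii}) and \(K_0\).

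\textbf{Main obstacle.} I expect the rational step to be the main obstacle: showing that \(\CH^q(A)\otimes\QQ/\ZZ\)-type divisible pieces cancel precisely, so that \(H^{2q}_{\Lic}\) is the integral lattice \(H^{2q}(\Sp(A);\ZZ)\) rather than something containing the large uniquely divisible group \(\CH^q(A)_\QQ\) (compare \cref{s:ordinary}). My first attempt would be to show that \(\ZZ(q)^{\Lic}(A)\otimes\QQ\) is computed by \(K_0(A)_\QQ\simeq\KU^0(\Sp(A))_\QQ\), which is finite-dimensional in each weight up to colimits. I would do this by proving that, for Banach algebras, \(\gr^q K_0(A)_\QQ\simeq H^{2q}(\Sp(A);\QQ)\) and that the rational motivic cohomology in degrees \(2q-1\) and \(2q\) is \(A^1\)-invariant along the holomorphic-to-continuous deformation. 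The étale rigidity for Banach algebras, via Stein neighborhoods, is the second difficult step.
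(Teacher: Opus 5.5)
\textbf{Torsion half.} Your torsion half matches the paper. You identify $\ZZ(q)^{\Lic}(A)/p$ with $\Gamma_{\et}(\Spec A;\mu_p^{\otimes q})$ by left Kan extension, then compare finite étale cohomology of $A$ with that of $\Sp(A)$ via topologically finitely generated subalgebras, polynomially convex compacta, and the étale--Betti comparison for Stein compacta. However, your phrase ``a filtered colimit of such algebras up to an isomorphism that does not change $\Sp$ or étale cohomology'' hides the actual work. One needs henselian invariance of finite étale cohomology plus a Gabber-type completion argument to show that $A\to\Cls{A}(X)$ and $\Cls{O}(X)\to\Cls{A}(X)$ induce isomorphisms.

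\textbf{The gap: integral assembly.} Your Claim is false. For $A=\CC$ and $q=0$ one has $\ZZ(0)^{\Lic}(\CC)\simeq\ZZ$, whose profinite completion has $H^0=\hat{\ZZ}$. Likewise, for $A=\Cls{C}(S^{2q})$ the completed side has $H^{2q}=\hat{\ZZ}$, while the theorem predicts $\ZZ$. Since $H^{2q}(\Sp(A);\ZZ)$ is not derived complete, no argument routed through the completion of the Lichtenbaum complex alone can produce it, and your step ``identify $H^{2q}(\Sp(A);\ZZ)$ inside it'' is never carried out.

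What is missing is an \emph{integral} comparison map and an analysis of its fiber. Construct $f\colon\ZZ(q)^{\Lic}(A)\to\Gamma_{\Bet}(\Sp(A);\ZZ(q))$ by Betti realization over smooth $\QQ$-algebras mapping to $\Cls{C}(\Sp(A))$, followed by left Kan extension, and let $F$ be its fiber. The mod-$p$ equivalences for all $p$ make $F$ rational. If $f\otimes\QQ$ is bijective on $\pi_{-2q}$ and surjective on $\pi_{-2q+1}$, then $\pi_{-2q}(F)=0$, so $\pi_{-2q}(f)$ is injective. Its cokernel is torsion, yet it embeds in the $\QQ$-vector space $\pi_{-2q-1}(F)$, so it vanishes. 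Your fracture-square idea works exactly when applied to $F$ rather than to $\ZZ(q)^{\Lic}(A)$.

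\textbf{Rational inputs.} This pins down the rational inputs, and you have their difficulty reversed.

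In degree $2q$ there is no uncontrolled uniquely divisible part; this is the short step you flagged as the main obstacle. $\CH^q_{\et}(A)_\QQ\simeq\CH^q(A)_\QQ$ is the Adams weight-$q$ eigenspace of $K_0(A)_\QQ$, by the algebraic Chern character on smooth algebras plus Kan extension. Novodvorskii's isomorphism $K_0(A)\simeq\ku^0(\Sp(A))$ respects $\lambda$-operations. The topological Chern character identifies the weight-$q$ eigenspace of $\ku^0(\Sp(A))_\QQ$ with $H^{2q}(\Sp(A);\QQ)$, compatibly with the algebraic Chern character.

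In degree $2q-1$ you only need, and can only get, surjectivity. Any claim making $H^{2q-1}_{\Lic}(\Spec A;\QQ(q))$ a topological invariant, such as your proposed $\mathbf{A}^1$-invariance, fails already for $A=\CC$, $q=1$: there the source is $\CC^\times\otimes\QQ$ and the target is $0$. Surjectivity comes from odd $K$-theory. Write a class of $\ku^{-1}(\Sp(A))_\QQ\simeq K_1^{\mathrm{top}}(A)_\QQ$ as a rational combination of matrices $g_i\in\GL_{N_i}(A)$. Take the weight-$q$ motivic Chern character of the universal matrix over the coordinate ring of $\GL_{N_i,\QQ}$, and pull it back along the map sending the universal matrix to $g_i$. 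Compatibility of motivic and topological Chern characters under Betti realization then gives the required preimage.
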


Literally as a functor on all rings,
take \(R\mapsto\CH^q_{\et}(R\otimes_{\ZZ}\QQ)\);
it preserves filtered
colimits and restricts
to the functor in \cref{main} on complex Banach algebras.

\subsection{Outline}\label{ss:outline}

We first recall facts about motivic cohomology
in \cref{s:motivic}.
In particular,
we construct a morphism
\begin{equation*}
  \ZZ(q)^{\Lic}(A)
  \to
  \Gamma_{\Bet}(\Sp(A);\ZZ(q)).
\end{equation*}
Now \cref{main} is the claim
that this induces a bijection
on~\(\pi_{-2q}\).
To prove this,
we use the following elementary input
for \(n=-2q\):

\begin{lemma}\label{fiber}
  Let~\(f\colon C\to C'\) be a map of spectra and let~\(n\in\ZZ\).
  Suppose the following:
  \begin{conenum}
    \item\label{i:fib-q}
      The rationalization~\(f\otimes\QQ\) is an isomorphism on
      \(\pi_n\) and surjective on~\(\pi_{n+1}\).
    \item\label{i:fib-p}
      For every prime~\(p\), the reduction~\(f\otimes\SS/p\) is
      injective on~\(\pi_n\), an isomorphism on~\(\pi_{n+1}\), and
      surjective on~\(\pi_{n+2}\).
  \end{conenum}
  Then~\(\pi_n(f)\) is an isomorphism.
\end{lemma}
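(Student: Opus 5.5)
Let \(F\) be the fiber of \(f\), with the long exact sequence \(\cdots\to\pi_{k}F\to\pi_k C\to\pi_k C'\to\pi_{k-1}F\to\cdots\). The claim that \(\pi_n(f)\) is an isomorphism is equivalent to two statements: \(\pi_{n}F\to\pi_nC\) is zero, and \(\pi_{n-1}F\to\pi_{n-1}C\) is injective. Since \(\otimes\QQ\) and \(\otimes\SS/p\) are exact and commute with fibers, the hypotheses translate into statements about \(F\otimes\QQ\) and \(F/p\). I will not use the fiber directly, though. Instead I will argue with the kernel and cokernel of \(\pi_n(f)\), using the universal coefficient sequences
\begin{equation*}
  0\to\pi_k(X)/p\to\pi_k(X/p)\to\pi_{k-1}(X)[p]\to0
\end{equation*}
and \(\pi_k(X\otimes\QQ)=\pi_k(X)\otimes\QQ\), both natural in \(X\).

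\emph{Injectivity.} Let \(x\in\ker\pi_n(f)\). By \eqref{i:fib-q}, \(x\) is torsion, so it suffices to show that \(\ker\pi_n(f)\) has no \(p\)-torsion for each prime \(p\). First I show that \(\pi_{n}(f)\) is injective on \(p\)-torsion. For this I use the square relating \(\pi_{n+1}(X/p)\to\pi_n(X)[p]\) for \(X=C,C'\). Since \(\pi_{n+1}(f/p)\) is surjective (indeed bijective), a diagram chase shows that \(\pi_n(C)[p]\to\pi_n(C')[p]\) is surjective. For injectivity, injectivity of \(\pi_n(f/p)\) on the subgroup \(\pi_n(C)/p\) gives that \(\pi_n(C)/p\to\pi_n(C')/p\) is injective. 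Now suppose \(y\in\pi_n(C)[p]\) maps to \(0\). Lift it to \(\tilde y\in\pi_{n+1}(C/p)\). Its image in \(\pi_{n+1}(C'/p)\) lies in \(\pi_{n+1}(C')/p\), which by surjectivity of \(\pi_{n+1}(f)\otimes\ldots\) comes from \(\pi_{n+1}(C)/p\). The required surjectivity of \(\pi_{n+1}(C)/p\to\pi_{n+1}(C')/p\) follows from surjectivity of \(\pi_{n+2}(f/p)\) combined with bijectivity of \(\pi_{n+1}(f/p)\), by the same four-lemma-type chase one degree up. Then bijectivity of \(\pi_{n+1}(f/p)\) forces \(\tilde y\in\pi_{n+1}(C)/p\), so \(y=0\). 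With \(p\)-torsion injectivity in hand, a \(p\)-torsion element of the kernel is zero, and hence every torsion element of the kernel is zero.

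\emph{Surjectivity.} Let \(z\in\pi_n(C')\). By \eqref{i:fib-q}, \(mz=\pi_n(f)(x)\) for some integer \(m\neq0\) and some \(x\). Induct on the prime factors of \(m\), so that it suffices to treat \(pz=f(x)\). By injectivity of \(\pi_n(C)/p\to\pi_n(C')/p\), since \(f(x)\in p\pi_n(C')\) we get \(x=px'+t\) with \(f(t)=0\)? Not quite: the injectivity only gives \(x\in p\pi_n(C)\). So \(x=px'\), and then \(p(z-f(x'))=0\). This makes \(z-f(x')\) \(p\)-torsion, and it lies in the image because \(\pi_n(C)[p]\to\pi_n(C')[p]\) is surjective. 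Hence \(z\) is in the image of \(\pi_n(f)\).

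The main obstacle will be the surjectivity of \(\pi_n(C)[p]\to\pi_n(C')[p]\) and the auxiliary surjectivity on \(\pi_{n+1}(-)/p\). These are where \eqref{i:fib-p} in degrees \(n+1\) and \(n+2\) enters, and the four-lemma chases must be set up carefully. The rational surjectivity on \(\pi_{n+1}\) may turn out to be unnecessary in this route; if the chase above fails, I would fall back on working with the fiber \(F\) directly.
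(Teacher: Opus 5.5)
Your surjectivity half is correct. The injectivity half has a genuine gap at the auxiliary claim that \(\pi_{n+1}(C)/p\to\pi_{n+1}(C')/p\) is surjective ``by the same chase one degree up'' from surjectivity of \(\pi_{n+2}(f/p)\) and bijectivity of \(\pi_{n+1}(f/p)\). That chase only gives surjectivity of \(\pi_{n+1}(C)[p]\to\pi_{n+1}(C')[p]\). Apply the snake lemma to the map of universal coefficient sequences in degree \(n+1\); its middle map \(\pi_{n+1}(f/p)\) is an isomorphism, so
\begin{equation*}
  \operatorname{coker}\bigl(\pi_{n+1}(C)/p\to\pi_{n+1}(C')/p\bigr)
  \simeq
  \ker\bigl(\pi_{n}(C)[p]\to\pi_{n}(C')[p]\bigr).
\end{equation*}
Your auxiliary claim is therefore equivalent to the very injectivity you are proving. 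It does not follow from (ii), even together with the \(\pi_n\)-part of (i). For a concrete example, let \(f\colon\Sigma^{n}H(\QQ/\ZZ)\to\Sigma^{n+1}H\ZZ\) be the boundary map of Eilenberg--MacLane spectra coming from \(0\to\ZZ\to\QQ\to\QQ/\ZZ\to0\). Its fiber is \(\Sigma^{n}H\QQ\), so \(f\otimes\SS/p\) is an equivalence for every \(p\), and \(\pi_n(f\otimes\QQ)\) is the isomorphism \(0\to0\). Yet \(\pi_n(C)[p]\to\pi_n(C')[p]\) is \(\ZZ/p\to0\), and \(\pi_n(f)\colon\QQ/\ZZ\to0\) is not injective. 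Only rational surjectivity on \(\pi_{n+1}\) fails here, so your remark that this hypothesis may be unnecessary is wrong: it is exactly the ingredient your injectivity argument never uses.

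You can repair this inside your framework by running your own surjectivity argument one degree up.
\begin{itemize}
  \item Given \(w\in\pi_{n+1}(C')\), rational surjectivity gives \(mw\in\operatorname{im}\pi_{n+1}(f)\) for some \(m\neq0\); as before, reduce to \(pw=f(v)\).
  \item Injectivity of \(\pi_{n+1}(C)/p\to\pi_{n+1}(C')/p\), which follows from injectivity of \(\pi_{n+1}(f/p)\), gives \(v=pv'\).
  \item Surjectivity on \(\pi_{n+1}(-)[p]\), which follows from surjectivity of \(\pi_{n+2}(f/p)\), puts \(w-f(v')\) in the image.
\end{itemize}
Hence \(\pi_{n+1}(f)\) is integrally surjective, so \(\pi_{n+1}(C)/p\to\pi_{n+1}(C')/p\) is surjective, and your chase for injectivity on \(\pi_n(C)[p]\) then goes through.

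With this fix, your proof is an elementwise version of the paper's. The paper instead passes to \(F=\operatorname{fib}(f)\):
\begin{itemize}
  \item (ii) gives \(\pi_{n+1}(F/p)=\pi_n(F/p)=0\), so \(\pi_n(F)\) and \(\pi_{n-1}(F)\) are torsion free.
  \item (i), including surjectivity on \(\pi_{n+1}\), gives \(\pi_n(F)\otimes\QQ=0\), hence \(\pi_n(F)=0\).
  \item The cokernel of \(\pi_n(f)\) is torsion and embeds into \(\pi_{n-1}(F)\), so it vanishes.
\end{itemize}
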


\begin{proof}
  We consider~\(F=\operatorname{fib}(f)\).
  By \cref{i:fib-p}, the homotopy
  long exact sequence shows
  \(\pi_{n+1}(F/p)=\pi_{n}(F/p)=0\).
  The Bockstein sequence therefore shows that~\(\pi_n(F)\) and
  \(\pi_{n-1}(F)\) are torsion free.
  By \cref{i:fib-q}, we have \(\pi_n(F)\otimes\QQ=0\),
  and hence~\(\pi_n(F)=0\);
  thus \(\pi_n(f)\) is injective.
  Its cokernel is torsion because
  \(\pi_n(f)\otimes\QQ\) is an isomorphism,
  but it also injects into
  the torsion-free group~\(\pi_{n-1}(F)\).
  Thus \(\pi_{n}(f)\) is surjective as well.
\end{proof}

For~\cref{i:fib-q}
of \cref{fiber},
we prove the following
in \cref{s:rational};
note again that \cref{main-q-1}
was already pointed out by Taylor in some form
on~\cite[page~117]{Taylor75}:

\begin{Theorem}\label{main-q-1}
  Let~\(A\) be a Banach algebra.
  The map
  \begin{equation*}
    \CH^{q}_{\et}(A)\otimes\QQ
    \to
    H^{2q}_{\Bet}(\Sp(A);\QQ(q))
  \end{equation*}
  is bijective for \(q\geq0\).
\end{Theorem}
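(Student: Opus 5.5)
Rationally, étale motivic cohomology agrees with Zariski motivic cohomology, and this in turn is the weight-\(q\) Adams eigenspace of algebraic \(K\)-theory. Over \(\QQ\)-algebras this comparison can be arranged on the left Kan extended level. The plan is therefore to identify \(\CH^q_{\et}(A)\otimes\QQ\) with the \(q\)-th Adams eigenspace of \(K_0(A)\otimes\QQ\), and then to compare \(K_0(A)\) with \(\KU^0(\Sp(A))\) using \cref{novodvorskii}.

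First, for smooth \(\QQ\)-algebras, the rational comparison \(\ZZ(q)^{\mathrm{Zar}}\otimes\QQ\simeq\ZZ(q)^{\et}\otimes\QQ\) holds because étale descent is automatic rationally for Bloch's motivic cohomology. Second, \(\bigoplus_q\QQ(q)[2q]\simeq K\otimes\QQ\) via the Adams decomposition, functorially on smooth algebras. Left Kan extending to all \(\QQ\)-algebras gives \(\ZZ(q)^{\Lic}(A)\otimes\QQ\) as the weight-\(q\) piece of the left Kan extension of \(K\otimes\QQ\) from smooth algebras. The latter is not \(K\otimes\QQ\) itself but its homotopy-invariant (\(\mathbb{A}^1\)) version; this is the \(KH\)-type/cdh phenomenon. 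On \(\pi_0\) in weight \(q\) with \(n=-2q\) it should still give \(K_0(A)^{(q)}_{\QQ}\) modulo a correction coming from nilpotent and negative-\(K\) contributions.

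Then I will use that \(\pi_{-2q}\) of the weight-\(q\) piece is \(\gr^q\) of \(K_0\otimes\QQ\), i.e.\ the \(q\)-eigenspace. Taking \(\pi_{-2q}\) of the weight-\(q\) piece gives \(\CH^q_{\et}(A)_\QQ\simeq K_0(A)^{(q)}_\QQ\). The target side is parallel: the Chern character gives \(\KU^0(X)\otimes\QQ\simeq\bigoplus_q H^{2q}(X;\QQ)\) for a compactum \(X=\Sp(A)\), using Čech cohomology and continuity of both sides along inverse limits of finite complexes. Under this isomorphism the \(\psi^k\)-eigenspace with eigenvalue \(k^q\) is \(H^{2q}\). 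The map \(K_0(A)\to K_0^{\top}(A)\simeq\KU^0(\Sp(A))\) from \cref{novodvorskii} commutes with Adams operations, and \(K_0(A)\to K_0^{\top}(A)\) is already an isomorphism by the Banach version of the Oka principle (Novodvorskii in degree \(0\)). Taking eigenspaces therefore gives the bijection. Finally, I must check that this composite agrees with the map of the theorem, built in \cref{s:motivic}, up to the normalization of the Chern character, which is invertible rationally.

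The main obstacle is the left Kan extension step: \(A\) is not smooth, and one must show that the left Kan extended rational motivic cohomology in degree \(2q\), weight \(q\), still matches the Adams eigenspace of \(K_0(A)\). My first attempt: write \(A\) as a filtered colimit of smooth \(\QQ\)-algebras? That is impossible in general, so instead I will use the simplicial resolution and the fact that \(\pi_0\) of a left Kan extended connective-in-the-right-degree functor is computed by a coequalizer. Here \(\QQ(q)[2q]\) on smooth algebras has no homotopy below degree \(0\) after shifting, since \(H^{i}(\QQ(q))=0\) for \(i>2q\) on affines. Hence \(\pi_{-2q}\) of the Kan extension is the coequalizer of \(\CH^q\otimes\QQ\) over \(P_1\rightrightarrows P_0\), for polynomial \(P_i\). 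On the \(K\)-theory side the same right-exactness holds for \(K_0\) of polynomial rings, so both sides are right Kan exact in the same way and the identification descends to \(A\).
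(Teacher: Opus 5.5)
Your overall route is the paper's: rational étale descent gives $\CH^q_\et(A)_\QQ\simeq\CH^q(A)_\QQ$, this is the weight-$q$ Adams summand of $K_0(A)_\QQ$, Novodvorskii's isomorphism respects $\lambda$-operations, and the topological Chern character identifies the weight-$q$ eigenspace of $\KU^0(\Sp(A))_\QQ$ with $H^{2q}_\Bet(\Sp(A);\QQ)$. However, the step you single out as the main obstacle is done incorrectly, and as written it computes the wrong groups.

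The functor $\ZZ(q)^{\Lic}$ is left Kan extended from \emph{smooth} $\QQ$-algebras. Neither $\CH^q$ nor $K_0$, restricted to smooth algebras, is left Kan extended from polynomial algebras. So a simplicial polynomial resolution $P_\bullet\to A$ does not compute $\pi_{-2q}$ of the Kan extension, nor $K_0(A)$. Concretely, for every polynomial $\QQ$-algebra $P$ we have $\CH^q(P)=0$ for $q\geq1$ and $K_0(P)=\ZZ$, by homotopy invariance. Your coequalizers are therefore $0$ and $\ZZ$ regardless of $A$. For $A=\Cls{C}(S^2)$ this would give $\CH^1(A)_\QQ=0$, whereas $\CH^1(A)=\Pic(A)\simeq H^2_\Bet(S^2;\ZZ)=\ZZ$. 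The matching of the two coequalizers is thus a matching of two wrong answers.

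Relatedly, your hedge about a ``$KH$-type'' correction from nilpotent and negative $K$-groups is unjustified, and if such a correction existed it would break the identification $\CH^q_\et(A)_\QQ\simeq K_0(A)^{(q)}_\QQ$. In fact there is none on $\pi_0$, as the argument below shows.

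What you need instead, and what the paper does, is to work directly with the defining colimit over $(\Cat{Ring}^{\sm}_\QQ)_{/A}$.
\begin{itemize}
\item This category has finite coproducts (tensor products over $\QQ$), hence is sifted.
\item For smooth $B$, the complex $\ZZ(q)(B)[2q]$ is connective; this is exactly your vanishing of $H^i(B;\QQ(q))$ for $i>2q$.
\item Therefore $\pi_{-2q}$ commutes with the colimit, and $\CH^q(A)\simeq\injlim_B\CH^q(B)$ as an ordinary colimit of abelian groups (\cref{top-lan}).
\end{itemize}
You also need the parallel statement $K_0(A)\simeq\injlim_B K_0(B)$.
\begin{itemize}
\item Surjectivity: every finitely generated projective $A$-module is pulled back from the universal idempotent over the smooth affine scheme of idempotent matrices.
\item Injectivity: every stable isomorphism $P_A\oplus A^m\simeq A^{n+m}$ is pulled back from the smooth affine $\mathrm{Isom}$-scheme over $B$, which is open in a vector bundle.
\end{itemize}
The decomposition $K_0(B)_\QQ\simeq\bigoplus_q\CH^q(B)_\QQ$, with $\psi^m$ acting by $m^q$, is natural in $B$. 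Colimits commute with $\otimes\QQ$ and direct sums, so it passes to $K_0(A)_\QQ\simeq\bigoplus_q\CH^q(A)_\QQ$ (\cref{chern}). With this replacement, the rest of your argument goes through as in the paper.
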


\begin{Theorem}\label{main-q-2}
  Let~\(A\) be a Banach algebra.
  The map
  \begin{equation*}
    H^{2q-1}_{\Lic}(\Spec A;\QQ(q))
    \to
    H^{2q-1}_{\Bet}(\Sp(A);\QQ(q))
  \end{equation*}
  is surjective
  for \(q\geq0\).
\end{Theorem}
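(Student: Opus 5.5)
The plan is to produce enough classes in \(H^{2q-1}_{\Lic}(\Spec A;\QQ(q))\) directly from invertible matrices over~\(A\). We then compare their images with the odd topological Chern character, using \cref{novodvorskii} in degree~\(1\). Since \(\QQ(q)\) is a rational coefficient system, the target \(H^{2q-1}_{\Bet}(\Sp(A);\QQ(q))\) is just \(H^{2q-1}_{\Bet}(\Sp(A);\QQ)\) up to a Tate twist. We choose the twist to match the normalization of the comparison map constructed in \cref{s:motivic}.

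\emph{Step 1: universal classes.} For \(n\geq1\), let \(G_n=\Spec\QQ[x_{ij},\det^{-1}]\) be the smooth \(\QQ\)-scheme \(\GL_{n}\). The tautological matrix is a class \([u_n]\in K_1(G_n)\). The rational Adams decomposition for smooth schemes gives \(K_1(G_n)_\QQ^{(q)}\simeq H^{2q-1}_{\mot}(G_n;\QQ(q))\). Rationally, étale and Zariski motivic cohomology agree, so this is also \(H^{2q-1}\) of \(\QQ(q)^{\et}(G_n)\). Let \(c_q^{(n)}\) be the \(q\)-th Adams component of~\([u_n]\). Under the Betti realization of \cref{s:motivic}, evaluated on the smooth algebra \(\Cls{O}(G_n)\) and then along \(\Sp\) of its complexification, \(c_q^{(n)}\) should map to the degree-\((2q-1)\) component of the topological Chern character of the tautological class in \(\KU^{-1}(\GL_n(\CC))\). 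This is a compatibility between the motivic Chern character and the topological one for smooth complex varieties, checked on the universal case. We need it for all~\(n\), stably in~\(n\).

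\emph{Step 2: classes over~\(A\).} A matrix \(g\in\GL_n(A)\) is a ring map \(\Cls{O}(G_n)\to A\). Since \(\ZZ(q)^{\Lic}\) is left Kan extended from smooth algebras, \(g\) pulls \(c_q^{(n)}\) back to a class \(g^*c_q^{(n)}\in H^{2q-1}_{\Lic}(\Spec A;\QQ(q))\). The comparison map is natural. On spectra it is induced by the continuous map \(\Sp(A)\to\GL_n(\CC)\) given by the Gelfand transform \(\hat g\) of~\(g\). Hence the image of \(g^*c_q^{(n)}\) is \(\ch_{2q-1}([\hat g])\), where \([\hat g]\in\KU^{-1}(\Sp(A))\). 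Justifying this naturality with respect to the Gelfand spectrum, beyond smooth algebras, is where the construction of \cref{s:motivic} gets used.

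\emph{Step 3: surjectivity.} Set \(X=\Sp(A)\). The Chern character \(\KU^{-1}(X)\otimes\QQ\to\bigoplus_{k}H^{2k-1}_{\Bet}(X;\QQ)\) is an isomorphism for compacta: it holds for finite CW complexes, and both sides are continuous for Čech cohomology along inverse limits. Every element of \(\KU^{-1}(X)=\pi_0\GL_\infty(\Cls{C}(X))\) is represented by some \(\hat g\) with \(g\in\GL_n(A)\), because \cref{novodvorskii} gives \(K_1^{\top}(A)\simeq\KU^{-1}(X)\), and \(K_1^{\top}(A)\) is a quotient of \(\bigcup_n\GL_n(A)\). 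Now take any \(\xi\in H^{2q-1}_{\Bet}(X;\QQ)\). Choose \(y\in\KU^{-1}(X)\otimes\QQ\) with \(\ch(y)=\xi\), concentrated in degree \(2q-1\). Write \(my=[\hat g]\) for an integer \(m\ge1\). Then \(\ch_{2q-1}([\hat g])=m\xi\), so \(\xi\) is the image of \(m^{-1}g^*c_q^{(n)}\).

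The main obstacle is the compatibility in Step 1. We must identify the Betti realization of the motivic Adams component \(c_q^{(n)}\) with the topological Chern character component, in the normalization of \(\QQ(q)\) used by the comparison map. I would first try to reduce it to the standard fact that the Betti regulator is compatible with Adams operations on \(K\)-theory of smooth complex varieties. Then I would use that \(H^{*}(\GL_n(\CC);\QQ)\) is generated by the Chern character components of~\([u_n]\), so that the universal case is forced up to a nonzero rational scalar in each degree. Such a scalar would be harmless for surjectivity.
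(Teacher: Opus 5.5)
Your proposal is correct and is essentially the paper's proof. Both take the weight-\(q\) motivic Chern character component of the universal matrix over the coordinate ring of \(\GL_{n,\QQ}\), pull it back along the classifying map to~\(A\), compare it with the odd topological Chern character, and use \cref{novodvorskii} to realize classes by matrices over~\(A\); the only cosmetic difference is that you clear denominators and use a single matrix, where the paper takes a rational linear combination of matrices. The compatibility you flag in Step~1 is exactly what the paper cites from Friedlander--Walker (Theorem~28), so cite it rather than relying on your fallback scalar argument, which as sketched does not show that the scalar is nonzero.
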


For~\cref{i:fib-p}
of \cref{fiber},
we prove the following stronger statement
in \cref{s:finite}:

\begin{Theorem}\label{main-p}
  Let~\(A\) be a Banach algebra
  and \(p\) a prime.
  Then
  \begin{equation*}
    \FF_{p}(q)^{\Lic}(A)\to\Gamma_{\Bet}(\Sp(A);\FF_{p}(q))
  \end{equation*}
  is an equivalence in \(\D(\FF_{p})\)
  for \(q\geq0\).
\end{Theorem}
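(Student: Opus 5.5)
We prove \cref{main-p} by identifying both sides with étale-type cohomology of \(\mu_p^{\otimes q}\) and reducing to a rigidity statement for henselian pairs. Since we work over~\(\QQ\), the Beilinson–Lichtenbaum (Rost–Voevodsky) theorem identifies \(\FF_p(q)^{\et}\) on smooth \(\QQ\)-algebras with \(R\Gamma_{\et}(\X;\mu_p^{\otimes q})\). Étale cohomology with \(\FF_p\)-coefficients is finitary and, as a functor on \(\QQ\)-algebras, left Kan extended from smooth ones (Popescu's theorem plus commutation with filtered colimits). Hence
\begin{equation*}
  \FF_p(q)^{\Lic}(A)\simeq R\Gamma_{\et}(\Spec A;\mu_p^{\otimes q})\simeq R\Gamma_{\et}(\Spec A;\FF_p),
\end{equation*}
where the second equivalence uses the primitive \(p\)th root of unity in~\(\CC\subset A\). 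The comparison map of \cref{s:motivic} should then correspond, up to this twist, to the natural map from étale cohomology of \(\Spec A\) to sheaf cohomology of~\(\Sp(A)\). We construct the latter map concretely as follows: for each \(x\in\Sp(A)\), the stalk \(\Cls{O}_x\) of germs of holomorphic functionals (or the colimit of the Arens–Calderón/Waelbroeck holomorphic functional calculus algebras around~\(x\)) receives a map from~\(A\).

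The key steps are these.
\begin{enumerate}
  \item Write \(A\) as a filtered colimit of finitely generated subalgebras \(B\). Via the joint spectrum, each \(\Sp(A)\) is an inverse limit of compacta \(\sigma(a_1,\dots,a_n)\subset\CC^n\) that are polynomially convex hulls. Both sides commute with these limits: étale cohomology commutes with filtered colimits, and Čech cohomology of compacta converts inverse limits to colimits. By the Arens–Calderón functional calculus, \(A\) is the filtered colimit of the rings \(\Cls{O}(K)\) for Stein (polynomially convex) compacta \(K\subset\CC^n\), with \(\Sp\) compatible.
  \item This reduces the theorem to showing that \(R\Gamma_{\et}(\Spec\Cls{O}(K);\FF_p)\to R\Gamma(K;\FF_p)\) is an equivalence for Stein compacta~\(K\). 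For this we use the Artin comparison for Stein spaces: for a Stein neighbourhood \(U\), the étale cohomology of the analytic space \(U\) agrees with Betti cohomology. Taking the colimit over \(U\supset K\) recovers \(H^*(K;\FF_p)\), by continuity of Čech cohomology.
  \item The algebraic-to-analytic step \(H_{\et}(\Spec\Cls{O}(K))\simeq\injlim_U H_{\et,\mathrm{an}}(U)\) is the main obstacle. It requires a GAGA/Oka-type comparison: finite étale covers and étale cohomology of the non-noetherian ring \(\Cls{O}(K)\) (which is noetherian when \(K\) is semianalytic, but not in general) must be controlled by analytic étale covers. I would first try Gabber's affine analog of proper base change (henselian rigidity) along the maps \(\Cls{O}(K)\to\Cls{O}_x\) to local henselian rings, combined with a Čech/Mayer–Vietoris descent over closed covers of \(K\) by small Stein compacta. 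The Stein compacta need to glue as Zariski-type data, and Cartan's Theorem B for compacta gives the required exactness; this reduces the claim to stalks, where both sides are \(\FF_p\) in degree~\(0\) because \(\Cls{O}_x\) is henselian with residue field~\(\CC\).
  \item Finally, check that the composite agrees with the map from \cref{s:motivic}. Naturality in~\(A\) and the universal property of left Kan extension make it enough to check this on smooth \(\CC\)-algebras, where it is the classical cycle class compatibility.
\end{enumerate}

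Failing the descent argument in step (3), a fallback is to compare instead with \(K\)-theory. The rigidity theorem of Gabber–Suslin for \(K/p\) on henselian pairs, together with \cref{novodvorskii} and the motivic filtration, would yield the mod~\(p\) statement degree by degree.
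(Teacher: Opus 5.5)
Your architecture matches the paper's up to the reduction to Stein compacta. The decisive step, however, is left unproved: the equivalence $\Gamma_{\et}(\Spec\Cls{O}(K);\FF_p)\simeq\Gamma_{\Bet}(K;\FF_p)$ for a polynomially convex compactum $K\subset\CC^n$. The route you sketch does not supply it. This is a deep input that the paper does not prove but imports: it is Benoist's comparison theorem for Stein compacta, used in the proof of \cref{xhppcz}.

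Your step (2) is harmless but tautological. The étale site of an analytic space $U$ is the site of local homeomorphisms, so the ``Artin comparison'' you quote carries no content. The genuinely algebraic statement for $\Spec\Cls{O}(U)$ is the same problem again, so everything rests on step (3). There, a closed cover $K=K_1\cup K_2$ by Stein compacta does not give a Zariski or étale cover of $\Spec\Cls{O}(K)$. The maps $\Cls{O}(K)\to\Cls{O}(K_i)$ are huge non-finite-type extensions, and the overlap term is $\Cls{O}(K_1\cap K_2)$, not $\Cls{O}(K_1)\otimes_{\Cls{O}(K)}\Cls{O}(K_2)$. Theorem~B gives exactness of $0\to\Cls{O}(K)\to\Cls{O}(K_1)\oplus\Cls{O}(K_2)\to\Cls{O}(K_1\cap K_2)\to0$. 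That is a statement about coherent sheaves and says nothing by itself about étale cohomology of these spectra. The Mayer--Vietoris property of $L\mapsto\Gamma_{\et}(\Spec\Cls{O}(L);\FF_p)$ is exactly what would have to be proved; the stalk computation at the henselian rings $\Cls{O}_x$ is the easy part. The $K$-theoretic fallback does not close the gap either. Even granting a comparison of $K(A)/p$ with topological $K$-theory mod~$p$, isolating individual weights of mod-$p$ étale motivic cohomology would need a mod-$p$ comparison of the motivic and Atiyah--Hirzebruch filtrations, which you do not have.

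Two earlier steps also need repair. First, $A$ is not the filtered colimit of the rings $\Cls{O}(K_E)$, where $K_E$ is the joint spectrum of a finite set $E\subset A$ in the closed subalgebra it generates. The functional-calculus map $C=\injlim_E\Cls{O}(K_E)\to A$ is surjective but not injective. For example, take $A=\Cls{C}(S^1)$ and $E=\{\cos,\sin\}$: the polynomial $z_1^2+z_2^2-1$ gives a nonzero element of the kernel at every stage. What you actually need, $F(A)\simeq\injlim_E F(\Cls{O}(K_E))$ for finitary henselian-invariant $F$, can be rescued. An element $f$ of the kernel vanishes on $\sigma_A(E)=\bigcap_{E'\supseteq E}\pr(K_{E'})$. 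So by compactness $1+f$ becomes invertible, and the distinguished roots of~\cref{e:hensel} become holomorphic germs, along $K_{E'}$ for large $E'$. The kernel is therefore a henselian ideal. This is a legitimate alternative to the paper's Cauchy-net arguments behind \cref{dense,holomorphic}, but it has to be made.

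Second, Popescu's theorem only identifies étale cohomology with its left Kan extension on ind-smooth $\QQ$-algebras. Banach algebras are generally not ind-smooth; for instance, $\Cls{C}([0,1])$ has local rings that are not domains. For arbitrary $A$ you need the rigidity argument of Bhatt--Mathew recorded in \cref{etale-lan}.
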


This concludes the proof of \cref{main}.
In \cref{s:ordinary},
we explain why ordinary Chow groups cannot replace étale Chow groups
in \cref{main}.

\subsection*{Acknowledgments}\label{ss:ack}

I thank Peter Scholze for helpful discussions,
especially for pointing me to~\cite{BouthierCesnavicius22}.
The initial stages of this research were carried out
at the Max Planck Institute for Mathematics.
During the course of this work
I was supported by the Danish National Research Foundation
through the Copenhagen Center for Geometry and Topology (DNRF151).

\subsection*{Convention}\label{ss:con}

We do not use~$\mathrm{L}$ nor~$\mathrm{R}$
to indicate how they are derived.
In particular,
\(\Gamma\) denotes cohomology.

\section{Recollection: motivic cohomology}\label{s:motivic}

We write \(\Cat{Ring}_{\QQ}\)
for the category of (static)\footnote{Most of what follows here works
  in the animated setting,
  but that generality is not relevant
  in this paper.
} \(\QQ\)-algebras.
We write \(\Cat{Ring}_{\QQ}^{\sm}\)
for the full subcategory of smooth \(\QQ\)-algebras.

\begin{remark}\label{xi4u0d}
  Throughout this section,
  we only consider \(\QQ\)-algebras.
  We could start from~\(\ZZ\) as a base
  and then consider the lisse extension
  of the motivic cohomology
  to obtain the same result.
  We stick to this convention for simplicity.
\end{remark}

\subsection{Motivic and Lichtenbaum cohomologies}

The shifted Bloch cycle complex \(z^{q}(\X,{*})[-2q]\)
determines
the \emph{motivic cohomology} functor
\(\ZZ(q)\colon\Cat{Ring}_{\QQ}^{\sm}\to\D(\ZZ)\)
for \(q\geq0\).
We write its étale sheafification
as \(\ZZ(q)^{\et}\colon\Cat{Ring}_{\QQ}^{\sm}\to\D(\ZZ)\),
which is often also called the \emph{Lichtenbaum cohomology}.
Following~\cite{ElmantoMorrow},
we use the term “lisse” in the following definition:

\begin{definition}\label{lisse}
  The \emph{lisse motivic cohomology}~\(\ZZ(q)^{\lis}\)
  is the left Kan extension
  of~\(\ZZ(q)\)
  along the inclusion
  \(\Cat{Ring}_{\QQ}^{\sm}\subset\Cat{Ring}_{\QQ}\).
  Similarly, we write \(\ZZ(q)^{\Lic}\)
  for the left Kan extension of \(\ZZ(q)^{\et}\).
\end{definition}

\begin{remark}\label{xc6217}
  Beware that \(\ZZ(q)^{\lis}\)
  is not even a Zariski sheaf.
  For example,
  as noted in~\cite[Example~3.2]{ElmantoMorrow},
  we have
  \begin{equation}\label{e:weight-one}
    \ZZ(1)^{\lis}(A)
    \simeq
    (\tau_{\geq-1}\Gamma_{\Zar}(\Spec A,\GG_m))[-1].
  \end{equation}
\end{remark}

As a general rule,
we write~\(\pi_{-n}(\ZZ(q)^{\X}(A))\)
as \(H^{n}_{\X}(\Spec A;\ZZ(q))\).
The main subject of this paper
is the Chow part:

\begin{definition}\label{xcasq8}
  We define the \emph{Chow group}
  and the \emph{étale Chow group} as
  \begin{align*}
    \CH^q(A)&=H^{2q}_{\lis}(\Spec A;\ZZ(q)),&
    \CH^q_{\et}(A)&=H^{2q}_{\Lic}(\Spec A;\ZZ(q)).
  \end{align*}
\end{definition}

\begin{example}\label{x4h978}
  By definition,
  \(\CH^{q}\)
  for smooth \(\QQ\)-algebras
  coincides with the usual Chow group.
\end{example}

\begin{example}\label{low-chow}
  In the two lowest weights this gives, for every~\(A\),
  \begin{align*}
    \CH^0(A)&\simeq H^0_{\Zar}(\Spec A;\ZZ),&
    \CH^1(A)&\simeq\Pic(A).
  \end{align*}
  To see this,
  we need to observe that
  these are Kan extended from smooth affines.
  In weight zero, this follows directly:
  A locally constant function
  \(\Spec A\to\ZZ\) has finite image,
  and the functor of those with image in a fixed finite subset
  \(S\subset\ZZ\) is represented by the finite étale
  \(\QQ\)-algebra~\(\QQ^{S}\).
  Thus \(A\mapsto H^0_{\Zar}(\Spec A;\ZZ)\)
  is a filtered colimit
  of functors represented by smooth \(\QQ\)-algebras,
  and is therefore left Kan extended
  from smooth affines.
  The weight-one assertion follows from~\cref{e:weight-one}.
\end{example}

In fact, \(\CH^{q}\) admits a underived definition:

\begin{proposition}\label{top-lan}
  For every~\(\QQ\)-algebra~\(B\) and~\(q\geq0\),
  there is a natural isomorphism
  \begin{equation*}
    \CH^{q}(B)
    \simeq
    \injlim_{A\in (\Cat{Ring}_{\QQ}^{\sm})_{/B}}\CH^{q}(A)
  \end{equation*}
  in \(\Cat{Ab}\).
\end{proposition}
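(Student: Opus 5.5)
The plan is to compute \(\pi_{-2q}\) of the left Kan extension \(\ZZ(q)^{\lis}\) by writing it as a colimit over smooth algebras. Since the shifted cycle complex vanishes in high cohomological degrees, this colimit can be controlled, and only an underived colimit of the top groups survives.

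First I reduce to a colimit over a sifted diagram. For a static \(\QQ\)-algebra \(B\), the left Kan extension along \(\Cat{Ring}_{\QQ}^{\sm}\subset\Cat{Ring}_{\QQ}\) is computed by the colimit in \(\D(\ZZ)\) over the comma category \((\Cat{Ring}_{\QQ}^{\sm})_{/B}\):
\begin{equation*}
  \ZZ(q)^{\lis}(B)\simeq\injlim_{A\in(\Cat{Ring}_{\QQ}^{\sm})_{/B}}\ZZ(q)(A).
\end{equation*}
Here we use that \(B\) is static, so the Kan extension from the ordinary category is the one intended in \cref{lisse}. This comma category has finite coproducts, namely tensor products of smooth algebras, which are again smooth over \(\QQ\). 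Hence it is sifted. Its colimits therefore commute with \(\pi_0\) of connective objects, or more generally with the top homotopy group of uniformly coconnective objects.

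Second I bound the degrees. For a smooth \(\QQ\)-algebra \(A\), the complex \(\ZZ(q)(A)=z^q(A,{*})[-2q]\) has cohomology only in degrees \(\leq 2q\). Indeed, \(z^q(A,n)\) sits in homological degree \(n\geq0\), so after the shift by \([-2q]\) everything lives in cohomological degrees \(\leq2q\). It is important that this is a statement about the chain complex itself, not only about its homology. Thus \(\ZZ(q)(A)[2q]\) is connective for every \(A\), and \(H^{2q}(A;\ZZ(q))=\pi_0(\ZZ(q)(A)[2q])=\CH^q(A)\), the usual Chow group (cf. \cref{x4h978}).

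Third I take \(\pi_0\) of the colimit. Colimits of connective objects are connective, and \(\pi_0\colon\D(\ZZ)_{\geq0}\to\Cat{Ab}\) is a left adjoint, left adjoint to the inclusion of the heart, so it preserves all colimits within connective objects. Combining the two previous steps gives
\begin{equation*}
  \CH^q(B)=\pi_0\bigl(\ZZ(q)^{\lis}(B)[2q]\bigr)\simeq\operatorname{colim}^{\Cat{Ab}}_{A}\pi_0\bigl(\ZZ(q)(A)[2q]\bigr)=\operatorname{colim}_A\CH^q(A).
\end{equation*}
Naturality in \(B\) is clear from functoriality of comma categories. Note that this argument needs only colimit preservation of \(\pi_0\) on connective objects. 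Siftedness is not actually required; it only helps to identify the colimit with a filtered-type colimit if desired, for instance by writing \(B\) as a filtered colimit of finitely generated algebras.

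The main obstacle I expect is making the first step rigorous: the Kan extension should be computed pointwise by the comma-category colimit in the \(\infty\)-category \(\D(\ZZ)\), with \(\ZZ(q)\) regarded as an \(\infty\)-functor on the \(1\)-category of smooth algebras. This holds by the general pointwise formula for left Kan extensions into cocomplete \(\infty\)-categories. The other point needing care is confirming the vanishing above degree \(2q\) at the level of the functorial model. Contravariant functoriality of Bloch's complex requires a moving lemma, but any model is quasi-isomorphic to a complex concentrated in degrees \(\leq 2q\). Coconnectivity is a property of the homology, so each value \(\ZZ(q)(A)[2q]\) is connective regardless of which model is chosen, and that is all the argument uses.
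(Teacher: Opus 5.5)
Your proposal is correct and is essentially the paper's proof: you use the pointwise formula for the left Kan extension together with the \((-2q)\)-connectivity of \(\ZZ(q)(A)\) for smooth \(A\), so that \(\pi_{-2q}\) of the colimit is the colimit in \(\Cat{Ab}\) of the groups \(\CH^q(A)\). You are also right that the siftedness the paper invokes is not needed for this step, because \(\pi_0\) is left adjoint to the inclusion of the heart and therefore preserves all colimits of connective objects.
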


\begin{proof}
  This follows from the fact
  that \(\ZZ(q)(A)\)
  is \((-2q)\)-connective for every
  smooth~\(A\) by definition.
  We used the fact that \((\Cat{Ring}_{\QQ}^{\sm})_{/B}\)
  is sifted (as an \(\infty\)-category),
  since it has finite coproducts, given by tensor product over~\(\QQ\).
\end{proof}

\begin{remark}\label{xxox4h}
  For a quasiprojective \(\QQ\)-scheme~\(X\),
  Fulton’s \emph{Chow cohomology},
  which is written as \(A^{q}(X)\),
  is
  \(\injlim_{X\to Y}\CH^q(Y)\),
  where~\(Y\) runs through smooth quasiprojective \(\QQ\)-schemes;
  see~\cite[Section~3.1]{Fulton75}.
  This is essentially the underived description in \cref{top-lan}.
  One difference is that
  he considers quasiprojective schemes,
  which is subsumed by Jouanolou’s trick.
\end{remark}

On smooth affines,
\(\CH_{\et}^{q}\)
agrees with the usual étale Chow group,
as studied, e.g., in~\cite{RosenschonSrinivas16}.
Unlike \cref{top-lan}, there is in general no formula for
\(\CH^q_{\et}(A)\) as a colimit of étale Chow groups in \(\Cat{Ab}\);
since
\(\Gamma_{\et}(\Spec B;\ZZ(q)^{\et})[2q]\) need not be connective,
lower colimit terms may contribute.

\subsection{The cycle map}\label{ss:betti}

Let \(X\) be a compactum
and \(A\) a smooth \(\QQ\)-algebra
with a map \(A\to\Cls{C}(X)\).
This is equivalent to
a morphism of local compacta
\(X\to (\Spec A)(\CC)\).
By Betti realization,
we have a morphism
\begin{equation*}
  \Gamma_{\et}(\Spec A;\ZZ(q)^{\et})
  \to
  \Gamma_{\Bet}((\Spec A)(\CC);\ZZ(q))
  \to
  \Gamma_{\Bet}(X;\ZZ(q)).
\end{equation*}
These composites are compatible with the morphisms of
\((\Cat{Ring}_{\QQ}^{\sm})_{/\Cls{C}(X)}\).
By taking the colimit,
we obtain a natural map
\begin{equation*}
  \ZZ(q)^{\Lic}(\Cls{C}(X))
  \to
  \Gamma_{\Bet}(X;\ZZ(q)).
\end{equation*}

\begin{definition}\label{x25gqw}
  Let \(A\) be a Banach algebra.
  The \emph{cycle map}\footnote{This name comes from the fact that
    on the Chow part this specializes
    to the classical cycle map.
  } is
  \begin{equation*}
    \ZZ(q)^{\Lic}(A)
    \to
    \ZZ(q)^{\Lic}(\Cls{C}(\Sp(A)))
    \to
    \Gamma_{\Bet}(\Sp(A);\ZZ(q)),
  \end{equation*}
  where the first comes from the Gelfand transform
  and the second is as described above.
\end{definition}

In particular,
we obtain
\begin{equation*}
  \CH^q_{\et}(A)
  \to
  H^{2q}_{\Bet}(\Sp(A);\ZZ(q)).
\end{equation*}
After fixing~\(2\pi i\), we freely identify the topological Tate
twist~\(\ZZ(q)\) with~\(\ZZ\).

\section{Rational coefficients}\label{s:rational}

We prove \cref{main-q-1,main-q-2}
in \cref{ss:rational,ss:odd},
respectively.
We review facts
about rational motivic cohomology in \cref{ss:rat}.

\subsection{Rational motivic cohomology}\label{ss:rat}

First,
rational motivic cohomology is automatically étale:

\begin{proposition}\label{rational}
  For every~\(\QQ\)-algebra~\(A\) and every~\(q\geq0\), there is a
  natural equivalence
  \begin{equation}\label{e:rational}
    \ZZ(q)^{\Lic}(A)\otimes\QQ
    \simeq
    \ZZ(q)^{\lis}(A)\otimes\QQ.
  \end{equation}
\end{proposition}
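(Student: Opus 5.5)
Both sides of \cref{e:rational} are left Kan extensions from \(\Cat{Ring}_{\QQ}^{\sm}\). Rationalization \(-\otimes\QQ\) is a colimit-preserving functor on \(\D(\ZZ)\), so it commutes with left Kan extension. The plan is therefore to reduce to smooth \(\QQ\)-algebras and then invoke the classical fact that rational motivic cohomology satisfies étale descent.

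First I make the Kan extension reduction precise. For any \(\QQ\)-algebra \(A\), the left Kan extension is computed pointwise as a colimit over \((\Cat{Ring}_{\QQ}^{\sm})_{/A}\). Tensoring with \(\QQ\) commutes with colimits in \(\D(\ZZ)\), so
\begin{equation*}
  \ZZ(q)^{\Lic}(A)\otimes\QQ\simeq\injlim_{B\to A}\bigl(\ZZ(q)^{\et}(B)\otimes\QQ\bigr),
\end{equation*}
and likewise for \(\ZZ(q)^{\lis}\) with \(\ZZ(q)(B)\) in place of \(\ZZ(q)^{\et}(B)\). The sheafification map \(\ZZ(q)\to\ZZ(q)^{\et}\) is natural, so it suffices to show that \(\ZZ(q)(B)\otimes\QQ\to\ZZ(q)^{\et}(B)\otimes\QQ\) is an equivalence for smooth \(B\). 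The resulting equivalence for general \(A\) is then natural by construction.

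For smooth \(B\), I would argue in two steps. First, \(\ZZ(q)^{\et}\otimes\QQ\) is the étale sheafification of \(\QQ(q)=\ZZ(q)\otimes\QQ\). This needs care: étale cohomology on smooth schemes of finite type over \(\QQ\) has finite cohomological dimension only after bounding the Galois part. For rational coefficients, however, Galois cohomology of profinite groups with \(\QQ\)-vector space coefficients vanishes in positive degrees. Alternatively one can use that \(\ZZ(q)\) is bounded above in cohomological degree \(2q\) and that \(\QQ(q)\) is already a Zariski (Nisnevich) sheaf. Second, the key input is that \(\QQ(q)\) satisfies étale descent on smooth \(\QQ\)-schemes. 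This is classical: Bloch's higher Chow groups with rational coefficients have transfers for finite étale maps, and by Voevodsky (or Geisser's comparison of Bloch's complex with \(\mathbf{DM}\)) rational motivic cohomology agrees with its étale version, since for sheaves with transfers étale and Nisnevich cohomology agree rationally. Concretely, for a finite Galois cover \(B\to B'\) with group \(G\), the transfer identifies \(\QQ(q)(B)\) with \((\QQ(q)(B'))^{hG}\), and homotopy fixed points are just fixed points rationally. Combined with Nisnevich descent of Bloch's complex (localization/Mayer–Vietoris), this gives étale descent.

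The main obstacle is the interaction between rationalization and étale sheafification in the second step. Étale sheafification does not obviously commute with \(-\otimes\QQ\), because sheafification involves limits, namely hypercovers and Postnikov completion. I would handle this by citing the comparison \(\QQ(q)\simeq\Gamma_{\et}(-;\QQ(q))\), which is \cite[Theorem~14.30]{MVW}-type (Mazza–Voevodsky–Weibel), and combining it with the fact that the cofiber of \(\ZZ(q)\to\ZZ(q)^{\et}\) on smooth \(B\) is torsion. The torsion claim holds because its mod-\(n\) reductions are controlled by Beilinson–Lichtenbaum and its rationalization vanishes by the étale descent just discussed. Once this is in place, the Kan-extension step above finishes the proof.
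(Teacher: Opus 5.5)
Your overall route is the paper's. Its proof is the single sentence that \(\QQ(q)=\ZZ(q)\otimes\QQ\) satisfies étale descent on smooth \(\QQ\)-algebras, and it leaves implicit the reduction you spell out (rationalization commutes with the pointwise colimit over \((\Cat{Ring}_{\QQ}^{\sm})_{/A}\)).

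The problem is the step you yourself call the main obstacle: that \(\ZZ(q)^{\et}(B)\otimes\QQ\) agrees with the étale sheafification of \(\QQ(q)\) at smooth \(B\). The paper also leaves this point implicit, but your argument for it is circular.
\begin{itemize}
  \item The claim that the cofiber \(C\) of \(\ZZ(q)\to\ZZ(q)^{\et}\) is torsion on smooth \(B\) is equivalent to the statement you are proving there.
  \item Justifying it by ``its rationalization vanishes by étale descent'' presupposes the interchange. Descent only says that the composite \(\QQ(q)(B)\to\ZZ(q)^{\et}(B)\otimes\QQ\to(L_{\et}\QQ(q))(B)\) is an equivalence. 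So \(\QQ(q)(B)\) is merely a retract of \(\ZZ(q)^{\et}(B)\otimes\QQ\).
  \item The mod-\(n\) reductions of \(C\) cannot make \(C\) torsion, since \(C\) and \(C\oplus\QQ\) have the same reductions.
  \item Your earlier justifications also miss. Totalizations commute with filtered colimits for uniformly cohomologically bounded \emph{below} diagrams, whereas \(\ZZ(q)\) is bounded \emph{above} by \(2q\).
  \item Vanishing of Galois cohomology with \(\QQ\)-coefficients concerns sheaves of \(\QQ\)-vector spaces. It does not apply to the integral sheaf \(\ZZ(q)^{\et}\), whose étale topos has infinite \(2\)-cohomological dimension once there are real points.
\end{itemize}

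What you need is that the objectwise rationalization of \(\ZZ(q)^{\et}\) is already an étale sheaf. The mod-\(n\) input gives this, applied to \(\ZZ(q)^{\et}\) rather than to \(C\). The paper uses the same input for its mod-\(p\) comparison: \(\ZZ(q)^{\et}/n\simeq\Gamma_{\et}(\X;\mu_n^{\otimes q})\) (Mazza--Voevodsky--Weibel, Theorem~10.3; Beilinson--Lichtenbaum is not needed). Hence, objectwise,
\begin{equation*}
  \ZZ(q)^{\et}\otimes\QQ/\ZZ\simeq\injlim_{n}\Gamma_{\et}(\X;\mu_n^{\otimes q})\simeq\Gamma_{\et}(\X;\QQ/\ZZ(q)).
\end{equation*}
This is an étale sheaf, because étale cohomology of qcqs schemes commutes with filtered colimits of abelian sheaves. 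Then \(\ZZ(q)^{\et}\otimes\QQ\) sits in a cofiber sequence between two étale sheaves, so it is an étale sheaf. The map \(\QQ(q)\to\ZZ(q)^{\et}\otimes\QQ\) becomes an equivalence after sheafification, since sheafification commutes with colimits. So its target is \(L_{\et}\QQ(q)\), which is \(\QQ(q)\) by étale descent.

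Alternatively, with no motivic input: for a finite group \(G\), \(M^{tG}\) is killed by \(\lvert G\rvert\), so \((\X)^{hG}\) commutes with \(\X\otimes\QQ\). Applied to the \(C_2\)-cover obtained by adjoining \(\sqrt{-1}\), over which étale cohomological dimension is finite, this shows that objectwise rationalization preserves étale sheaves on smooth \(\QQ\)-schemes.

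With this in place, your Kan-extension step finishes the proof exactly as in the paper.
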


\begin{proof}
  On smooth \(\QQ\)-algebras,
  \(\QQ(q)=\ZZ(q)\otimes\QQ\) satisfies étale descent.
\end{proof}

Also, the motivic filtration splits
rationally.
Concretely, we note the following:

\begin{proposition}\label{chern}
  For every~\(\QQ\)-algebra~\(A\),
  there is a natural decomposition
  \begin{equation*}
    K_{0}(A)\otimes\QQ
    \simeq
    \bigoplus_{q\geq0}\CH^{q}(A)\otimes\QQ.
  \end{equation*}
  The summand indexed by~\(q\) is the Adams weight-\(q\) summand,
  explained in the proof.
\end{proposition}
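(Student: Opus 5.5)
We prove \cref{chern} first for smooth \(\QQ\)-algebras and then extend it to all \(\QQ\)-algebras by left Kan extension, in the spirit of \cref{top-lan}.

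\textbf{Smooth case.} Let \(A\) be a smooth \(\QQ\)-algebra. The Adams operations \(\psi^k\) act on \(K_0(A)\otimes\QQ\), and they act semisimply. Indeed, by Soulé / Grothendieck–Riemann–Roch without denominators (or via the Bloch–Lichtenbaum / Friedlander–Suslin–Levine motivic spectral sequence, which degenerates rationally), the Chern character identifies \(K_0(A)\otimes\QQ\) with \(\bigoplus_q\CH^q(A)\otimes\QQ\). Under this identification \(\psi^k\) acts on the \(q\)th summand by \(k^q\). We therefore define the weight-\(q\) summand as the simultaneous \(k^q\)-eigenspace of the \(\psi^k\). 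Because the \(\psi^k\) are natural ring endomorphisms, this description is natural in \(A\). A useful consequence is that only finitely many weights occur, since \(\CH^q(A)=0\) for \(q>\dim A\).

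\textbf{Extension to general \(A\).} Let \(B\) be an arbitrary \(\QQ\)-algebra. By \cref{top-lan}, \(\CH^q(B)\otimes\QQ\) is the filtered colimit of \(\CH^q(A)\otimes\QQ\) over the sifted category \((\Cat{Ring}_{\QQ}^{\sm})_{/B}\). The same holds for \(K_0\), with two ingredients:
\begin{itemize}
\item \(K_0\) is left Kan extended from smooth \(\QQ\)-algebras on \(\pi_0\), because connective \(K\)-theory commutes with sifted colimits of rings and every \(\QQ\)-algebra is such a colimit of smooth ones (polynomial rings);
\item \(K\) is connective on smooth rings, so the colimit can be computed on \(\pi_0\) as in the proof of \cref{top-lan}.
\end{itemize}
Rationalization commutes with colimits. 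Since the decomposition is natural and each weight summand is cut out by idempotents that are polynomials in the \(\psi^k\), it passes to the colimit. This gives the decomposition for \(B\).

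\textbf{Main obstacle.} The delicate step is the naturality of the smooth-case splitting with respect to all maps of smooth algebras. It must be compatible with pullback, not only with the ring structure, and the Chern character must be compatible with pullback and with the eigenspace decomposition. To handle this, we define everything intrinsically via the \(\gamma\)-filtration and the Adams eigenspaces, both of which are functorial by construction. We then cite the comparison \(\gr^q_\gamma K_0(A)\otimes\QQ\simeq\CH^q(A)\otimes\QQ\) for smooth \(A\), so that naturality reduces to naturality of the cycle-class comparison.
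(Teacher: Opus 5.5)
Your route is the paper's. For smooth \(\QQ\)-algebras the algebraic Chern character gives the splitting, with \(\psi^m\) acting on the \(q\)th summand by \(m^q\). The general case then follows by Kan extension, using \cref{top-lan} together with the analogous statement for \(K_0\). The problem is your justification of that \(K_0\) statement, which is false as written.

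Connective \(K\)-theory does \emph{not} commute with sifted colimits of rings. Every \(\QQ\)-algebra \(B\) is a sifted colimit of polynomial \(\QQ\)-algebras, for instance via a reflexive coequalizer \(P_1\rightrightarrows P_0\to B\) or a simplicial resolution. By Quillen--Suslin, \(K_0(P_n)\simeq\ZZ\) with all transition maps the identity. So your claim would force \(K_0(B)\simeq\operatorname{coeq}(K_0(P_1)\rightrightarrows K_0(P_0))\simeq\ZZ\) for every \(B\). This already fails for \(B=\QQ\times\QQ\simeq\QQ[x]\otimes_{\QQ[y]}\QQ\) (with \(y\mapsto x^2-x\) and \(y\mapsto0\)), where \(K_0(B)\simeq\ZZ^2\). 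Commuting with sifted colimits amounts to being left Kan extended from \emph{polynomial} algebras. Being left Kan extended from \emph{smooth} algebras is a different property, since the smooth category contains rings like \(\QQ\times\QQ\); only the latter holds, and only the latter is needed.

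What you actually need, and what the paper invokes as the counterpart for \(K_0\), is \(K_0(B)\simeq\injlim_{A\in(\Cat{Ring}_{\QQ}^{\sm})_{/B}}K_0(A)\). One option is to use the known fact that \(K^{\mathrm{cn}}\) is left Kan extended from smooth algebras (it rests on smoothness of the stack of vector bundles), then pass to \(\pi_0\) by connectivity as in \cref{top-lan}. Alternatively, argue directly:
\begin{itemize}
\item Since the index category has finite coproducts (tensor products over \(\QQ\)), every element of the colimit comes from a single \(K_0(A)\).
\item Surjectivity: every idempotent matrix over \(B\) is pulled back from the coordinate ring of the smooth \(\QQ\)-scheme \(\Idem_n\) of idempotent \(n\times n\) matrices.
\item Injectivity: if \([e]-[f]\) dies in \(K_0(B)\), then after stabilization \(f=geg^{-1}\) for some \(g\in\GL_N(B)\). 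The relation is then already witnessed over the smooth scheme \(\Idem_N\times\GL_N\), which maps to \(\Idem_N\times\Idem_N\) by \((e,g)\mapsto(e,geg^{-1})\). This gives a zigzag in \((\Cat{Ring}_{\QQ}^{\sm})_{/B}\) killing the class.
\end{itemize}

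Two smaller points. First, the colimit in \cref{top-lan} is sifted, not filtered. Second, the polynomials in \(\psi^k\) that cut out the weights depend on \(\dim A\), so they are not uniform over the diagram. You do not need them: the transition maps commute with \(\psi^k\), hence preserve eigenspaces, and colimits commute with direct sums. The naturality issue you flag as the main obstacle is standard, since Chern classes are natural under pullback. The real subtlety is the \(K_0\) step above.
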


\begin{proof}
  If~\(B\) is a smooth~\(\QQ\)-algebra,
  the algebraic Chern character gives a natural isomorphism
  \begin{equation*}
    K_{0}(B)\otimes\QQ
    \simeq
    \bigoplus_{q\geq0}\CH^{q}(B)\otimes\QQ.
  \end{equation*}
  Under this isomorphism,
  \(\psi^{m}\) acts on the summand~\(\CH^{q}(B)\otimes\QQ\)
  as multiplication by~\(m^{q}\).
  The result follows from \cref{top-lan}
  and the counterpart for~\(K_{0}\)
  by Kan extension.
\end{proof}

\subsection{The rational comparison theorem}\label{ss:rational}

We prove \cref{main-q-1},
stating that
\(\CH^{q}_{\et}(A)\otimes\QQ\to H^{2q}_{\Bet}(\Sp(A);\QQ(q))\)
is bijective for \(q\geq0\).
Again note that this was already pointed
out by Taylor;
see~\cite[page~117]{Taylor75}.

\begin{proof}[Proof of \cref{main-q-1}]
  We write~\(X\) for \(\Sp(A)\).
  \Cref{novodvorskii} gives a natural isomorphism
  \begin{equation*}
    K_{0}(A)\simeq\ku^{0}(\Sp(A)).
  \end{equation*}
  Since exterior powers commute with extension of scalars,
  this is compatible with
  the~\(\lambda\)-operations,
  and hence the Adams operations.

  For every compactum~\(X\),
  the topological Chern character gives
  \begin{equation*}
    \ku^{0}(X)\otimes\QQ
    \simeq
    \bigoplus_{r\geq0}H^{2r}_{\Bet}(X;\QQ),
  \end{equation*}
  and~\(\psi^{m}\) acts in degree~\(2r\)
  as multiplication by~\(m^{r}\);
  see, e.g.,~\cite[Theorem~V.3.27]{Karoubi78}.
  Thus its weight-\(q\) eigenspace is~\(H^{2q}_{\Bet}(X;\QQ)\).
  By \cref{chern},
  compatibility of algebraic and topological Chern characters
  (see, e.g.,~\cite[Theorem~28]{FriedlanderWalker05})
  with our comparison morphism shows the desired result.
\end{proof}

\subsection{The other rational input}\label{ss:odd}

We then prove \cref{main-q-2},
stating that
\(H^{2q-1}_{\Lic}(\Spec A;\QQ(q))\to H^{2q-1}_{\Bet}(\Sp(A);\QQ(q))\)
is surjective for \(q\geq0\).

\begin{proof}[Proof of \cref{main-q-2}]
  We write~\(X\) for \(\Sp(A)\).
  We assume~\(q\geq1\),
  since the case \(q=0\) is tautological.

  Let~\(x\in H^{2q-1}_{\Bet}(X;\QQ)\).
  The odd topological Chern character gives
  \begin{equation*}
    \ku^{-1}(X)\otimes\QQ
    \simeq
    \bigoplus_{r\geq1} H^{2r-1}_{\Bet}(X;\QQ).
  \end{equation*}
  Choose~\(\kappa\in\ku^{-1}(X)\otimes\QQ\) whose weight-\(q\)
  component is~\(x\).
  By \cref{novodvorskii},
  the left-hand side is \(K^{\top}_{1}(A)\otimes\QQ\).
  We may therefore write~\(\kappa\) as a finite rational linear
  combination of classes represented by matrices
  \(g_{i}\in\GL_{N_{i}}(A)\).
  Let \(B_i\)
  be the coordinate ring of the affine variety~\(\GL_{N_i,\QQ}\).
  The universal matrix~\(u_{i}\)
  defines a class in~\(K_{1}(B_{i})\),
  and
  its weight-\(q\) motivic Chern-character component is
  \begin{equation*}
    \ch^{\mot}_{q}(u_{i})
    \in H^{2q-1}_{\lis}(\Spec B_{i};\QQ(q)).
  \end{equation*}
  The homomorphism~\(B_{i}\to A\) carrying~\(u_{i}\) to~\(g_{i}\) is an
  object of~\((\Cat{Ring}_{\QQ}^{\sm})_{/A}\), so this class determines
  an element of the source via \cref{e:rational}.
  Compatibility of motivic and topological Chern characters under
  Betti realization sends it to
  \(\ch^{\top}_{q}(g_{i})\);
  see, e.g.,~\cite[Theorem~28]{FriedlanderWalker05}.
  The same rational linear combination
  of these universal classes maps
  to~\(x\),
  proving surjectivity.
\end{proof}

\begin{remark}\label{odd-kernel}
  The map in \cref{main-q-2} need not be injective,
  already in weight one.
  Indeed, for the Banach algebra~\(A=\CC\),
  \cref{e:weight-one} and
  \cref{e:rational} give
  \begin{equation*}
    H^1(\ZZ(1)^{\Lic}(\CC)\otimes\QQ)
    \simeq \CC^\times\otimes\QQ,
  \end{equation*}
  whereas the target is
  \(H^1_{\Bet}(\star;\QQ(1))=0\).
\end{remark}

\section{Finite coefficients}\label{s:finite}

We prove \cref{main-p} in \cref{ss:finite}.
We first review facts about finite-coefficient étale motivic cohomology
in \cref{ss:et}.
We give axiomatic arguments in \cref{ss:rig}.

\subsection{Finite-coefficient étale motivic cohomology}\label{ss:et}

\begin{proposition}\label{mod-p}
  For every prime~\(p\), every~\(q\geq0\),
  and every \(\QQ\)-algebra~\(A\),
  there is a natural equivalence
  \begin{equation*}
    \ZZ(q)^{\Lic}(A)\otimes_{\ZZ}\FF_{p}
    \simeq
    \Gamma_{\et}(\Spec A;\mu_p^{\otimes q}).
  \end{equation*}
\end{proposition}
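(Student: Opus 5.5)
The plan is to prove the equivalence first on smooth \(\QQ\)-algebras and then pass to all \(\QQ\)-algebras by left Kan extension. The right-hand side must also be shown to be left Kan extended from smooth algebras.

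\emph{Smooth case.} For a smooth \(\QQ\)-algebra \(B\), the Bloch complex satisfies \(\ZZ(q)\otimes\FF_p\simeq\ZZ/p(q)\). The rigidity/Beilinson–Lichtenbaum theorem (Suslin–Voevodsky, Geisser–Levine, with the Bloch–Kato conjecture proved by Voevodsky–Rost) identifies the étale sheafification of \(\ZZ/p(q)\) with \(\mu_p^{\otimes q}\) on the small étale site of \(B\); in characteristic zero, \(p\) is always invertible. Since tensoring with \(\FF_p\) commutes with étale sheafification, this gives a natural equivalence
\begin{equation*}
  \Gamma_{\et}(\Spec B;\ZZ(q)^{\et})\otimes\FF_p\simeq\Gamma_{\et}(\Spec B;\mu_p^{\otimes q})
\end{equation*}
that is functorial in \(B\).

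\emph{Kan extension of the left side.} Left Kan extension commutes with \(\X\otimes_{\ZZ}\FF_p\), since the latter preserves colimits in \(\D(\ZZ)\). Hence \(\ZZ(q)^{\Lic}\otimes\FF_p\) is the left Kan extension of \(B\mapsto\Gamma_{\et}(\Spec B;\mu_p^{\otimes q})\) from \(\Cat{Ring}^{\sm}_{\QQ}\).

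\emph{Kan extension of the right side.} It remains to show that \(A\mapsto\Gamma_{\et}(\Spec A;\mu_p^{\otimes q})\) on \(\Cat{Ring}_{\QQ}\) is left Kan extended from smooth algebras. I would argue in three steps.

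\begin{conenum}
  \item The functor commutes with filtered colimits of rings (étale cohomology of qcqs schemes commutes with such limits). Every \(\QQ\)-algebra is a filtered colimit of finitely generated ones, so it suffices to compare values on finitely generated \(A\). I would use the sifted, rather than filtered, colimit formula, since the Kan extension is a sifted colimit over \((\Cat{Ring}_{\QQ}^{\sm})_{/A}\).
  \item A cleaner route is to write any \(A\) as a geometric realization of polynomial algebras (a simplicial resolution \(P_\bullet\to A\)); these are smooth. The question is then whether étale cohomology with \(\mu_p^{\otimes q}\) coefficients sends \(|P_\bullet|\), taken in animated rings, to the colimit. This is the key input: étale cohomology with torsion coefficients prime to the residue characteristics depends only on \(\pi_0\) (topological invariance), and it is Kan extended along smooth algebras. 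This is exactly the rigidity statement of Gabber/Fujiwara–Gabber for henselian pairs, packaged as in Bhatt–Mathew ("arc-topology"), or as in Elmanto–Morrow's lisse formalism, where \(\Gamma_{\et}(\X;\mu_p^{\otimes q})\) is shown to be lisse (left Kan extended from smooth algebras) via Gabber affine analog of proper base change and Popescu-type arguments. I would cite this: for \(\ell\) invertible, \(\Gamma_{\et}(\X;\ZZ/\ell)\) is left Kan extended from smooth algebras on \(\ZZ[1/\ell]\)-algebras.
  \item Composing the two identifications, naturally in \(A\), yields the claimed equivalence.
\end{conenum}

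\emph{Main obstacle.} The main obstacle is this last Kan-extension property of torsion étale cohomology. It is not formal, since étale cohomology is a sheaf and not obviously a sifted-colimit-preserving functor. I would try first to reduce it via Popescu (every regular, e.g. ind-smooth, algebra is a filtered colimit of smooth ones) together with Gabber's affine analog of proper base change. That result gives \(\Gamma_{\et}(\Spec A)\simeq\Gamma_{\et}(\Spec A/I)\) for henselian pairs \((A,I)\). It would be applied to henselizations of polynomial algebras along surjections, showing that the sifted colimit over smooth algebras over \(A\) computes \(\Gamma_{\et}(\Spec A;\mu_p^{\otimes q})\).
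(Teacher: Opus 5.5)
Your proposal is correct and follows the paper's proof. The paper first treats smooth algebras via the finite-coefficient identification \(\Gamma_{\et}(\Spec B;\ZZ(q)^{\et})\otimes\FF_p\simeq\Gamma_{\et}(\Spec B;\mu_p^{\otimes q})\), citing Mazza--Voevodsky--Weibel, Theorem~10.3, which rests on Suslin rigidity, so Bloch--Kato is more than you need. It then uses that \(\X\otimes\FF_p\) commutes with left Kan extension together with the Bhatt--Mathew result that \(A\mapsto\Gamma_{\et}(\Spec A;\mu_p^{\otimes q})\) is left Kan extended from smooth algebras; since you also end up citing that result, your henselization sketch is not needed.
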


We need the following in the proof:

\begin{lemma}[Bhatt–Mathew]\label{etale-lan}
  Let~\(p\) be a prime and~\(q\geq0\).
  \begin{align*}
    \Cat{Ring}_{\QQ}&\to\D(\ZZ);&
    A&\mapsto
    \Gamma_{\et}(\Spec A;\mu_{p}^{\otimes q})
  \end{align*}
  is left Kan extended from \(\Cat{Ring}_{\QQ}^{\sm}\).
\end{lemma}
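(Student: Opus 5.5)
The plan is to reduce to Popescu's theorem (general Néron desingularization), which makes every regular \(\QQ\)-algebra a filtered colimit of smooth \(\QQ\)-algebras, together with a cdh/\(h\)-type argument that handles arbitrary rings. The cleanest route is the one attributed to Bhatt–Mathew: étale cohomology with finite coefficients satisfies \emph{arc-descent}, and is \emph{finitary}, i.e., it commutes with filtered colimits of rings. Write \(G(A)=\Gamma_{\et}(\Spec A;\mu_p^{\otimes q})\) and \(L\) for the left Kan extension of \(G|_{\Cat{Ring}^{\sm}_{\QQ}}\). There is a canonical comparison map \(L\to G\), and we must show it is an equivalence.

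The first step treats regular rings. Since \(G\) commutes with filtered colimits, and every regular Noetherian \(\QQ\)-algebra is a filtered colimit of smooth ones by Popescu, the map \(L\to G\) is an equivalence on such algebras. Here we also use that \(L\) commutes with filtered colimits, because left Kan extensions from finitely presented objects preserve sifted colimits. Passing to filtered colimits once more, the map is an equivalence on all ind-regular \(\QQ\)-algebras. In particular this covers products of absolutely integrally closed valuation rings and their filtered colimits, which is what we need next.

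The second step is descent for both sides. The functor \(G\) is an arc-sheaf by Bhatt–Mathew's arc-descent for étale cohomology with torsion coefficients prime to the characteristic (automatic over \(\QQ\)). The key point is to show that \(L\) is also an arc-sheaf, or at least a \(v\)/\(h\)-sheaf on finitely presented algebras. For this I would try the argument that \(L\) is Kan extended from smooth algebras on which it coincides with \(G\). On finite type \(\QQ\)-schemes, resolution of singularities gives proper hypercovers by smooth schemes, and cdh descent then reduces the value on a finite type algebra to smooth values. Concretely, I would show that \(L\) restricted to finite type algebras satisfies cdh descent. One route is to compare with a known example: for \(L\) of étale cohomology, use that the left Kan extension of a functor that is \(\mathbf{A}^1\)-invariant with cdh descent agrees with the cdh sheafification on finite type algebras (in the style of Elmanto–Morrow's lisse versus cdh comparison). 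Once \(L\) and \(G\) are both cdh sheaves on finite type \(\QQ\)-algebras, both are determined by their values on smooth algebras via resolution, so they agree on finite type algebras. The case of a general \(\QQ\)-algebra then follows from finitarity, since every \(\QQ\)-algebra is a filtered colimit of finite type ones and both sides commute with filtered colimits.

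The main obstacle is the cdh descent (equivalently, \(\mathbf{A}^1\)-invariance plus descent) of the Kan extension \(L\) on singular finite type algebras. Left Kan extensions do not usually inherit sheaf properties, as Remark \ref{xc6217} warns in the integral motivic setting. What should save us is that étale cohomology with \(\mu_p^{\otimes q}\) coefficients is \(\mathbf{A}^1\)-invariant and rigid in characteristic zero. I would first try to prove the statement by checking it on henselian local rings, or more precisely on absolutely integrally closed valuation rings, where both sides are \(\mu_p^{\otimes q}\) in degree zero. On such a ring \(V\), \(L(V)\) is computed as a colimit over smooth algebras mapping to \(V\). Using rigidity (Gabber's affine analogue of proper base change for henselian pairs), the strict henselizations of these smooth algebras should contribute exactly \(\mu_p^{\otimes q}\). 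An arc-hypercover argument would then conclude.
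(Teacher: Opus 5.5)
There is a genuine gap, and it sits exactly at the step you flag as the main obstacle. Nothing in your proposal shows that the left Kan extension \(L\) has cdh (or arc) descent, and neither of your two routes to it works.

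The general principle you invoke is false. You claim that the left Kan extension from smooth algebras of an \(\mathbf{A}^1\)-invariant functor with cdh descent agrees with its cdh sheafification on finite type algebras. Weight-one motivic cohomology is a counterexample: it is \(\mathbf{A}^1\)-invariant and satisfies cdh descent on smooth \(\QQ\)-algebras. Yet by \cref{e:weight-one} its lisse extension has \(H^{1}_{\lis}(\Spec A;\ZZ(1))\simeq A^{\times}\). For \(A=\QQ[\epsilon]/(\epsilon^{2})\) this contains the nilpotent units \(1+\QQ\epsilon\), which no cdh sheaf can see, since cdh sheaves are nil-invariant. So any descent property of \(L\) must come from something specific to \(\mu_{p}^{\otimes q}\)-coefficients, and your argument never supplies it.

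Your fallback is circular. You propose to compare \(L\) and \(G\) on absolutely integrally closed valuation rings and conclude by an arc-hypercover argument. But agreement on such rings detects equivalences only between arc-hypersheaves, and whether \(L\) is one is exactly what is in question. Separately, your claim that products of such valuation rings are ind-regular is unjustified. Popescu needs Noetherian rings, and even a single non-Noetherian valuation ring needs Zariski's local uniformization.

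The paper does not reprove the lemma. It quotes the proof of \cite[Theorem~5.1]{BhattMathew23}, where \(A\mapsto\Gamma_{\et}(\Spec A[1/p];\mu_{p}^{\otimes q})\) is shown to be left Kan extended from smooth \(\ZZ\)-algebras, and then restricts to \(\QQ\)-algebras.

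If you want a self-contained argument over \(\QQ\), the missing idea is to use rigidity of \(G\) (Gabber--Huber henselian invariance) directly, rather than descent for \(L\). Combine it with the fact that maps out of a smooth algebra lift along surjections with henselian kernel (Elkik--Gruson). A sketch:
\begin{enumerate}
\item Build a semisimplicial \(\QQ\)-algebra \(P_{\bullet}\), augmented to \(A\), by letting \(P_{n}\) be the henselization of a polynomial ring along the kernel of a surjection onto the \(n\)th matching ring.
\item Each \(P_{n}\) is ind-smooth. Inductively, each matching ring, and then each \(P_{n}\), maps onto \(A\) with henselian kernel.
\item For smooth \(B\), \(\operatorname{Hom}(B,-)\) preserves limits, so by Elkik--Gruson \(\operatorname{Hom}(B,P_{\bullet})\to\operatorname{Hom}(B,A)\) has surjective matching maps. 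Hence \(\ZZ[\operatorname{Hom}(B,P_{\bullet})]\) resolves \(\ZZ[\operatorname{Hom}(B,A)]\).
\item The coend formula for \(L\) then gives \(L(A)\simeq\operatorname{colim}L(P_{\bullet})\).
\item By finitarity, \(L(P_{n})\simeq G(P_{n})\), and by rigidity \(G(P_{n})\simeq G(A)\). So this colimit is \(G(A)\), and the composite is the counit.
\end{enumerate}
This route needs no Popescu, no resolution of singularities, and no cdh or arc descent for \(L\).
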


\begin{proof}
  In the proof of~\cite[Theorem~5.1]{BhattMathew23},
  Bhatt–Mathew showed that
  the functor
  \begin{align*}
    \Cat{Ring}_{\ZZ}&\to\D(\ZZ);&
    A&\mapsto
    \Gamma_{\et}(\Spec A[1/p];\mu_{p}^{\otimes q})
  \end{align*}
  is left Kan extended from smooth~\(\ZZ\)-algebras.
  The result follows from this statement.
\end{proof}

\begin{proof}[Proof of \cref{mod-p}]
  For smooth~\(B\), the finite-coefficient calculation for the étale
  motivic complex gives
  \begin{equation*}
    \Gamma_{\et}(\Spec B;\ZZ(q)^{\et})\otimes_{\ZZ}\FF_{p}
    \simeq
    \Gamma_{\et}(\Spec B;\mu_p^{\otimes q});
  \end{equation*}
  see, e.g.,~\cite[Theorem~10.3]{MazzaVoevodskyWeibel06}.
  Hence,
  the result follows from \cref{etale-lan}.
\end{proof}

\subsection{Continuity for henselian-invariant functors}\label{ss:rig}

The continuity arguments we present here
are not specific to étale cohomology.
We formulate them axiomatically for future reference.

\begin{definition}\label{henselian-invariant}
  Let~\(R\) be a ring
  and consider a functor \(F\colon\Cat{Ring}_{R}\to\Cat{Set}\).
  We call it \emph{finitary} if it commutes with filtered colimits.
  We call it \emph{henselian invariant}
  if, for every henselian pair~\((A,I)\) of~\(R\)-algebras,
  \(F(A)\to F(A/I)\) is bijective.
\end{definition}

Here, recall that a ring with an ideal \((A,I)\)
is called \emph{henselian}
if \(1+I\subset A^{\times}\) and
when
\begin{equation}
  \label{e:hensel}
  P_{a}(T)=T^{d}(T-1)+a_{d}T^{d}+\dotsb+a_{0}
  \in A[T]
\end{equation}
satisfies \(a_{0}\), \dots, \(a_{d}\in I\),
there is a root~\(t\in 1+I\).

In this section,
our generality is Banach rings over~\(\ZZ\),
as studied notably in~\cite[Section~1.1]{Berkovich90}\footnote{More precisely, he assumes \(\lVert1\rVert=1\).
  Here, we only impose~\(\lVert1\rVert\leq1\)
  to allow the zero ring.
}.
A \emph{seminorm} on a ring~\(A\)
is a map \(A\to[0,\infty)\)
satisfying
\(\lVert0\rVert\leq0\),
\(\lVert-a\rVert=\lVert a\rVert\),
\(\lVert a+b\rVert\leq\lVert a\rVert+\lVert b\rVert\),
\(\lVert1\rVert\leq1\),
and \(\lVert ab\rVert\leq\lVert a\rVert\lVert b\rVert\).
It is a \emph{norm} if \(\lVert a\rVert=0\) implies \(a=0\).
A \emph{Banach ring} is a ring equipped with a complete norm.
Morphisms of Banach rings are contractive ring homomorphisms.

\begin{definition}\label{cauchy-rings}
  For \(r\geq0\),
  we equip~\(\NN^{r}\) with the product order.
  For~\(n\in\NN^{r}\),
  we write \(\NN^{r}_{\geq n}\) for the corresponding tail.
  We call a map
  \(a\colon\NN^{r}_{\geq n}\to A\) \emph{cubically Cauchy}
  if its restriction to every coordinate face obtained
  by fixing any subset of the coordinates is Cauchy.
  This includes the full \(r\)-dimensional face itself.
  E.g., for \(r=2\), the array is Cauchy for the product order
  and every row and every column is Cauchy.
  Let \(\Cau^{r}_{n}(A)\) be the ring of cubically Cauchy nets,
  with pointwise operations, and set
  \begin{equation*}
    \Cau^{r}_{\infty}(A)=\injlim_{n\in\NN^{r}}\Cau^{r}_{n}(A),
  \end{equation*}
  where the maps are restrictions,
  to be the ring of germs of cubically Cauchy nets.
  We write \(\Cau^{r}_{\infty}(A)_{0}\)
  for the ideal of germs converging to zero.
\end{definition}

Pointwise ring operations preserve the condition, as may be checked on
each coordinate face.
For \(r\geq1\), taking the joint limit in the separated
completion gives an isomorphism
\begin{equation*}
  \Cau^{r}_{\infty}(A)/\Cau^{r}_{\infty}(A)_{0}\simeq A^{\wedge}.
\end{equation*}
Indeed, the diagonal is cofinal,
while every ordinary Cauchy sequence gives a cubically Cauchy array
by pullback along a coordinate projection.

We use the following,
which is extracted
from the argument of Gabber in
the proof of~\cite[Theorem~2.1.15]{BouthierCesnavicius22}\footnote{However,
  see \cref{xxf2s8} for an error in the proof
  presented there.
}:

\begin{proposition}[Gabber]\label{gabber}
  Let~\(R\) be a ring
  and \(A\) an \(R\)-algebra
  equipped with a seminorm.
  Suppose that \((\Cau^{r}_{\infty}(A),\Cau^{r}_{\infty}(A)_{0})\)
  is a henselian pair for~\(r=1\), \(2\).
  Then (separated) completion induces a bijection~\(F(A)\simeq F(A^{\wedge})\)
  for any finitary henselian-invariant functor~\(F\colon\Cat{Ring}_{R}\to\Cat{Set}\).
\end{proposition}

\begin{proof}
  For \(n\geq0\),
  we consider
  \(t\colon\NN\to\star\),
  and projections \(\pr_{1}\), \(\pr_{2}\colon\NN_{\geq(n,n)}^{2}
  \simeq(\NN_{\geq n})^{2}\rightrightarrows\NN_{\geq n}\)
  and form the diagram
  \begin{equation*}
    \begin{tikzcd}
      A\ar[d,"t^{\star}"']\ar[r,"t^{\star}"]&
      \Cau^{1}_{n}(A)\ar[d,"\pr_{1}^{\star}"]\\
      \Cau^{1}_{n}(A)\ar[r,"\pr_{2}^{\star}"]&
      \Cau^{2}_{(n,n)}(A)\rlap.
    \end{tikzcd}
  \end{equation*}
  Let \(e_n\colon\Cau^1_n(A)\to A\) be evaluation at~\(n\).
  Restricting a cubically Cauchy array to
  \(\NN_{\geq n}\times\{n\}\) and to
  \(\{n\}\times\NN_{\geq n}\) gives maps
  \(r_1\), \(r_2\colon\Cau^2_{(n,n)}(A)\to\Cau^1_n(A)\).
  They satisfy
  \begin{align*}
    r_i\pr_i^\star&=\id,&
    r_1\pr_2^\star&=t^\star e_n,&
    r_2\pr_1^\star&=t^\star e_n.
  \end{align*}
  Together with \(e_n t^\star=\id\), these identities exhibit the
  displayed square as a split, hence absolute, pullback.
  After applying~\(F\) and taking the colimit over~\(n\),
  using that filtered colimits commute with finite limits in~\(\Cat{Set}\),
  we obtain a pullback square
  \begin{equation*}
    \begin{tikzcd}
      F(A)\ar[d]\ar[r]&
      F(\Cau^{1}_{\infty}(A))\ar[d]\\
      F(\Cau^{1}_{\infty}(A))\ar[r]&
      F(\Cau^{2}_{\infty}(A))\rlap,
    \end{tikzcd}
  \end{equation*}
  where all the maps are injective.
  By the completion identification above and henselian invariance,
  the other three terms identify with~\(F(A^{\wedge})\),
  and the cospan becomes its tautological identity diagram.
  Therefore \(F(A)\to F(A^{\wedge})\) is a bijection.
\end{proof}

\begin{remark}\label{xxf2s8}
  The facewise condition in \cref{cauchy-rings} is needed
  for the splitting argument in the preceding proof.
  In the proof
  of~\cite[Theorem~2.1.15\,(b)]{BouthierCesnavicius22},
  a Cauchy net on~\(S\times S\) is restricted
  to \(\{U\}\times S_{\geq U}\),
  but this restriction need not be Cauchy in general.
\end{remark}

We give two applications of this.
We start with the following useful estimate:

\begin{lemma}\label{hensel}
  For every~\(d\geq0\), there are constants
  \(\epsilon_{d}\), \(r_{d}\), \(c_{d}>0\) with the following property:
  Let~\(A\) be a Banach ring
  and \(a=(a_{0},\dotsc,a_{d})\) be a tuple in~\(A\).
  If \(\lVert a\rVert=\max_i\lVert a_i\rVert<\epsilon_d\), then
  the polynomial~\(P_a\) in~\cref{e:hensel} has a unique root~\(t(a)\) satisfying
  \(\lVert t(a)-1\rVert\leq r_d\), and
  \begin{equation}\label{e:hensel-bound}
    \lVert t(a)-1\rVert\leq c_d\lVert a\rVert.
  \end{equation}
  Moreover, if~\(f\colon A\to A'\) is a morphism of Banach rings
  and~\(a'\) is another such tuple in~\(A'\),
  then
  \begin{equation}\label{e:hensel-diff}
    \lVert f(t(a))-t(a')\rVert
    \leq c_d\max_i\lVert f(a_i)-a'_i\rVert.
  \end{equation}
\end{lemma}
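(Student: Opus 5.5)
We find the root as the fixed point of a contraction, using Newton's method with frozen derivative. Write \(P_a(T)=T^d(T-1)+Q_a(T)\) with \(Q_a(T)=\sum_i a_iT^i\). Substituting \(T=1+s\), the unperturbed polynomial \(T^d(T-1)=(1+s)^d s\) has derivative \(1\) at \(s=0\). This suggests the map
\begin{equation*}
  \Phi_a(s)=s-P_a(1+s)=s-(1+s)^d s-Q_a(1+s),
\end{equation*}
whose fixed points are exactly the roots \(t=1+s\) of~\(P_a\). Note that \(\Phi_a(s)=-\bigl((1+s)^d-1\bigr)s-Q_a(1+s)\). Only the ring axioms and the seminorm inequalities \(\lVert xy\rVert\leq\lVert x\rVert\lVert y\rVert\) and \(\lVert1\rVert\leq1\) will be used, so every constant depends only on~\(d\). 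This works even when \(\lVert1\rVert=0\), i.e.\ for the zero ring, where everything is trivial.

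The first step is to fix \(r=r_d\leq1\) small and estimate \(\Phi_a\) on the closed ball \(D_r=\{s:\lVert s\rVert\leq r\}\), which is complete because \(A\) is.
\begin{itemize}
  \item For \(s,s'\in D_r\), expand \((1+s)^ds-(1+s')^ds'\) binomially. Every term of degree at least two has the form \(s^j-s'^j\) with \(j\geq2\), bounded by \(j r^{j-1}\lVert s-s'\rVert\). Hence the nonlinear part has Lipschitz constant at most \(C_d r\), where \(C_d\) depends only on~\(d\).
  \item The perturbation \(Q_a(1+s)-Q_a(1+s')\) is bounded by \(C'_d\lVert a\rVert\,\lVert s-s'\rVert\).
  \item We also have \(\lVert\Phi_a(0)\rVert=\lVert Q_a(1)\rVert\leq(d+1)\lVert a\rVert\).
\end{itemize}
Choose \(r\) with \(C_d r\leq1/4\), and \(\epsilon_d\) with \(C'_d\epsilon_d\leq1/4\) and \((d+1)\epsilon_d\leq r/2\). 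Then \(\Phi_a\) is a \(\tfrac12\)-contraction of~\(D_r\) into itself, since \(\lVert\Phi_a(s)\rVert\leq\tfrac12 r+\tfrac12 r\). Banach's fixed point theorem gives existence and uniqueness of the root \(t(a)\) with \(\lVert t(a)-1\rVert\leq r_d\). The standard a priori bound then gives
\begin{equation*}
  \lVert s\rVert\leq 2\lVert\Phi_a(0)\rVert\leq 2(d+1)\lVert a\rVert,
\end{equation*}
which is \cref{e:hensel-bound}.

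For \cref{e:hensel-diff}, use that a contractive ring homomorphism \(f\) satisfies \(f(\Phi_a(s))=\Phi_{f(a)}(f(s))\) and maps \(D_r\) into the corresponding ball of~\(A'\). Hence \(f(t(a))-1\) is a fixed point of \(\Phi_{f(a)}\) in that ball. If \(\lVert f(a)\rVert<\epsilon_d\), it equals \(t(f(a))-1\) by uniqueness. I would avoid assuming this, however, and argue directly.

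Let \(s=f(t(a))-1\) and \(s'=t(a')-1\). Then
\begin{equation*}
  s-s'=\Phi_{f(a)}(s)-\Phi_{a'}(s')
  =\bigl(\Phi_{f(a)}(s)-\Phi_{a'}(s)\bigr)+\bigl(\Phi_{a'}(s)-\Phi_{a'}(s')\bigr).
\end{equation*}
\begin{itemize}
  \item The first bracket is \(Q_{a'-f(a)}(1+s)\), of norm at most \(C''_d\max_i\lVert f(a_i)-a'_i\rVert\).
  \item The second bracket is at most \(\tfrac12\lVert s-s'\rVert\), because \(a'\) is small.
\end{itemize}
Rearranging gives \cref{e:hensel-diff} with \(c_d\geq2C''_d\). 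Finally, take \(c_d\) to be the maximum of the constants appearing in the two estimates.

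The main obstacle I expect is bookkeeping with a mere seminorm on the ring, where \(\lVert1\rVert\) may be less than~\(1\) and need not be multiplicative. The binomial Lipschitz estimates must avoid using \(\lVert1\rVert=1\); only upper bounds are needed, so this should be fine. One must also make sure the constants are uniform in~\(A\).
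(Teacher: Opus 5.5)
Your proof is correct and follows essentially the paper's argument: the same map \(\Phi_a(u)=u-P_a(1+u)\) on the closed ball of radius \(r_d\), the same contraction and self-map estimates, the same a priori bound giving \(c_d=2(d+1)\), and the same fixed-point comparison for \cref{e:hensel-diff}. The only difference is that the paper packages the Lipschitz and coefficient dependence into a single inequality, whereas you split them, which is equivalent; your hedge about \(\lVert f(a)\rVert<\epsilon_d\) is also unnecessary, since contractivity of \(f\) gives it automatically.
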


\begin{proof}
  We write~\(T=1+u\) and consider the self-map of~\(A\)
  given by
  \begin{equation*}
    \Phi_a(u)=u-P_a(1+u).
  \end{equation*}
  For~\(a=0\), the polynomial
  \(\Phi_0(u)=u-(1+u)^{d}u\) has neither a constant nor a linear term.
  The identities
  \begin{equation*}
    u^j-v^j=(u-v)\sum_{k=0}^{j-1}u^{k}v^{j-1-k}
  \end{equation*}
  give, for coefficient tuples~\(a\), \(b\), \(0<r\leq1\), and
  \(\lVert u\rVert\), \(\lVert v\rVert\leq r\),
  \begin{equation}\label{e:contraction-bound}
    \lVert\Phi_a(u)-\Phi_b(v)\rVert
    \leq
    (\alpha_d(r)+\beta_d\lVert a\rVert)\lVert u-v\rVert
    +\beta_d\lVert a-b\rVert,
  \end{equation}
  where \(\lVert a-b\rVert=\max_i\lVert a_i-b_i\rVert\),
  \begin{align*}
    \alpha_d(r)&=\sum_{j=1}^{d}\binom{d}{j}(j+1)r^j,&
    \beta_d&=\sum_{i=0}^{d}(i+1)2^i.
  \end{align*}
  Here, we used
  \(\Phi_0(u)=-\sum_{j=1}^{d}\binom{d}{j}u^{j+1}\).
  We also have
  \begin{equation*}
    \lVert\Phi_a(0)\rVert\leq(d+1)\lVert a\rVert.
  \end{equation*}
  Choose~\(r_d>0\) such that \(r_d\leq1\) and
  \(\alpha_d(r_d)\leq1/4\),
  and then choose~\(\epsilon_d>0\) such that
  \begin{align*}
    \beta_d\epsilon_d&\leq1/4,&
    (d+1)\epsilon_d&\leq r_d/2.
  \end{align*}
  It follows that every~\(\Phi_a\) carries the closed ball of
  radius~\(r_d\) to itself and is \(1/2\)-Lipschitz there.

  The contraction theorem
  for the complete metric space underlying~\(A\)
  gives a unique fixed point~\(u_a\) in this ball.
  Then \(t(a)=1+u_a\) is precisely the required root of~\(P_a\).
  Moreover,
  \begin{equation*}
    \lVert u_a\rVert
    \leq\tfrac12\lVert u_a\rVert+(d+1)\lVert a\rVert,
  \end{equation*}
  which gives~\cref{e:hensel-bound} with \(c_d=2(d+1)\).

  Finally, we consider \(u=f(t(a))-1\) and \(u'=t(a')-1\).
  Since~\(f\) is contractive, these belong to the same closed ball
  and are fixed points of~\(\Phi_{f(a)}\) and~\(\Phi_{a'}\), respectively.
  Thus~\cref{e:contraction-bound} gives
  \begin{equation*}
    \lVert u-u'\rVert
    \leq\tfrac12\lVert u-u'\rVert
    +\beta_d\max_i\lVert f(a_i)-a'_i\rVert.
  \end{equation*}
  Increasing~\(c_d\) to \(2\max(d+1,\beta_d)\)
  gives~\cref{e:hensel-diff}.
\end{proof}

\begin{theorem}\label{banach}
  Let~\(R\) be a ring and let
  \(F\colon\Cat{Ring}_R\to\Cat{Set}\)
  be a finitary and henselian-invariant functor.
  Consider a filtered diagram of Banach rings whose
  underlying diagram takes values in~\(\Cat{Ring}_R\).
  If \(A=\injlim_i A_i\) is its colimit \emph{in Banach rings},
  then
  \begin{equation*}
    \injlim_{i}F(A_{i})\to F(A)
  \end{equation*}
  is a bijection.
\end{theorem}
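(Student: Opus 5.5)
The plan is to reduce \cref{banach} to \cref{gabber}. First I would identify the Banach colimit. Let \(A^{\circ}=\injlim_i A_i\) be the colimit of underlying rings, computed in \(\Cat{Ring}_R\). Equip it with the seminorm \(\lVert a\rVert=\inf\lVert a_i\rVert\), where the infimum runs over all \(i\) and all representatives \(a_i\in A_i\) of~\(a\). Since the transition maps are contractive, this is a seminorm. I expect the colimit in Banach rings to be the separated completion \(A=(A^{\circ})^{\wedge}\). To check this, note that a compatible family of contractive maps \(A_i\to B\) into a Banach ring~\(B\) gives a contractive map \(A^{\circ}\to B\) for this seminorm. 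That map then extends uniquely to the separated completion. Since \(F\) is finitary, \(\injlim_iF(A_i)\simeq F(A^{\circ})\), so the claim reduces to showing that \(F(A^{\circ})\to F((A^{\circ})^{\wedge})\) is a bijection.

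By \cref{gabber}, it then suffices to show that \((\Cau^{r}_{\infty}(A^{\circ}),\Cau^{r}_{\infty}(A^{\circ})_{0})\) is a henselian pair for \(r=1,2\). The condition \(1+I\subset\) units is a special case of the root condition, but I would check it separately anyway. Let \(x\) be a germ tending to zero. Then \(1+x_n\) is eventually of norm-distance less than~\(1\) from~\(1\). Its inverse should be built levelwise from a Neumann series. The difficulty is that \(A^{\circ}\) is not complete, so this series need not converge in~\(A^{\circ}\). This is the same issue as for roots, so I would treat both at once.

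The main step is to produce roots of \(P_a\) for coefficient germs \(a_0,\dotsc,a_d\in\Cau^r_\infty(A^{\circ})_0\). I would use \cref{hensel} in the Banach rings~\(A_i\), where it applies since these are complete. Pass to a tail \(\NN^r_{\geq n}\) on which \(\lVert a(m)\rVert<\epsilon_d\). For each index~\(m\), choose \(i(m)\) and lifts \(\tilde a(m)\in A_{i(m)}\) with norm less than~\(\epsilon_d\); this is possible by the definition of the infimum seminorm. Then set \(t(m)\) to be the image in~\(A^{\circ}\) of the root \(t(\tilde a(m))\in A_{i(m)}\). This is a root of \(P_{a(m)}\), because ring maps preserve roots and the lifts map to \(a(m)\).

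The bound \cref{e:hensel-bound} gives \(\lVert t(m)-1\rVert\leq c_d\lVert a(m)\rVert\to0\) along every face. It remains to show that \(t\) is cubically Cauchy; I expect this to be the main obstacle. For two indices \(m,m'\), map both lifts into a common \(A_j\) and refine the lifts so that their difference is nearly \(\lVert a(m)-a(m')\rVert\). Then \cref{e:hensel-diff} and uniqueness of the root in the \(r_d\)-ball give \(\lVert t(m)-t(m')\rVert\leq c_d\lVert a(m)-a(m')\rVert+\delta\), where \(\delta\) can be made arbitrarily small. Hence the Cauchy property on each face transfers from \(a\) to~\(t\).

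The choices made at each index must not depend on the pair \(m,m'\). Uniqueness of the root in the ball makes \(t(m)\) independent of the chosen lift \(i(m)\), because images of the root under transition maps remain the unique root in the ball, and this should resolve the issue. Since \(t(m)-1\to0\), we have \(t\in1+I\). This verifies the henselian condition and concludes the proof via \cref{gabber}.
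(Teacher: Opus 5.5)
Your argument is the paper's proof. You pass to the algebraic colimit with the colimit seminorm and reduce to \cref{gabber}. You then build termwise roots from \cref{hensel} in Banach stages: uniqueness in the \(r_d\)-ball makes them independent of the lifts, and \cref{e:hensel-diff}, applied to lifts that nearly realize the colimit seminorm, gives cubical Cauchyness.

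The one step that does not hold as written is the unit condition \(1+\Cau^{r}_{\infty}(A^{\circ})_{0}\subset\Cau^{r}_{\infty}(A^{\circ})^{\times}\). It is \emph{not} a special case of the root condition. The pair \((\CC,\CC)\) satisfies the root condition because \(\CC\) is algebraically closed, yet \(1+(-1)=0\); this is why the definition of a henselian pair lists the unit condition separately. After saying you would treat it together with the roots, you only construct roots, so this condition is never actually verified.

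It is easy to close, and the paper does it directly. Take a germ \(x\) in the ideal and fix \(0<\rho<1\). On a late tail, lift each \(x_n\) to a stage with norm at most \(\rho\), invert \(1+x_n\) there by the geometric series, and let \(b_n\) be its image, so \(\lVert b_n\rVert\leq(1-\rho)^{-1}\). The identity \(b_n-b_m=b_n(x_m-x_n)b_m\) then gives \(\lVert b_n-b_m\rVert\leq(1-\rho)^{-2}\lVert x_n-x_m\rVert\), which transfers Cauchyness on every coordinate face. No well-definedness issue arises, because inverses are unique.

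A small wording point: \(t-1\) tends to zero for the product order on the whole tail, not along every face. The former is all that membership in \(1+\Cau^{r}_{\infty}(A^{\circ})_{0}\) requires.
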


\begin{proof}
  We apply \cref{gabber}
  for the \emph{algebraic} colimit~\(B\)
  equipped with the colimit seminorm.
  Since \(A\) is the completion of~\(B\),
  it suffices to show that
  \((\Cau^{r}_{\infty}(B),\Cau^{r}_{\infty}(B)_{0})\)
  is henselian for any~\(r\geq0\).

  We first show that
  \(\Cau^{r}_{\infty}(B)_{0}\) is contained in the Jacobson radical.
  Let~\((a_n)_n\) be a cubically Cauchy net in~\(B\) converging to zero.
  Fix \(0<\rho<1\). On a sufficiently late tail,
  each~\(a_n\) can be lifted to some Banach stage with norm at most~\(\rho\).
  The geometric series in that stage gives an inverse~\(b_n\in B\)
  to~\(1+a_n\), with \(\lVert b_n\rVert\leq(1-\rho)^{-1}\).
  The identity
  \begin{equation*}
    b_n-b_m=b_n(a_m-a_n)b_m
  \end{equation*}
  and the uniform bound show that~\((b_n)_n\) is Cauchy.
  Applying the same estimate after restriction to any coordinate face
  shows that it is Cauchy on every coordinate face.
  Its germ is the inverse of~\(1+(a_n)_n\).

  It remains to verify the root condition.
  Consider~\cref{e:hensel} with
  \(a_{i}\in\Cau^{r}_{\infty}(B)_{0}\),
  and choose cubically Cauchy representatives~\(a_{i,n}\in B\)
  converging to zero.
  On a sufficiently late tail,
  lift the tuple~\((a_{0,n},\dotsc,a_{d,n})\)
  to a common Banach stage with norm less than~\(\epsilon_d\),
  and let~\(t_n\in B\) be the image of the root supplied by
  \cref{hensel}.
  Uniqueness in a common later stage shows that~\(t_n\)
  is independent of the choices.
  Taking lifts whose norms are arbitrarily close to the colimit
  seminorm gives
  \begin{align*}
    \lVert t_n-1\rVert
    &\leq c_d\max_i\lVert a_{i,n}\rVert,&
    \lVert t_n-t_m\rVert
    &\leq c_d\max_i\lVert a_{i,n}-a_{i,m}\rVert.
  \end{align*}
  These estimates show that~\((t_n)_n\) is Cauchy and converges
  to~\(1\). Applying the second estimate after restriction to
  any coordinate face shows that it is Cauchy on every coordinate face.
  Its germ is the desired root in
  \(1+\Cau^{r}_{\infty}(B)_{0}\).
\end{proof}

\begin{theorem}\label{calculus}
  Let~\(R\) be a ring and
  \(F\colon\Cat{Ring}_{R}\to\Cat{Set}\) a finitary henselian-invariant functor.
  Let~\(A\subset B\) be a dense \(R\)-subalgebra of a Banach ring.
  Suppose the following:
  \begin{conenum}
    \item
      If~\(z\in A\) and \(\lVert z\rVert<1\), then the inverse
      \((1+z)^{-1}\), computed in~\(B\), belongs to~\(A\).
    \item
      For every~\(d\geq0\) and every
      \(a_{0}\), \dots, \(a_{d}\in A\) with
      \(\max_{i}\lVert a_{i}\rVert<\epsilon_{d}\), the distinguished root
      \(t(a)\in B\) of~\cref{e:hensel} supplied by \cref{hensel} belongs to~\(A\).
  \end{conenum}
  Then the inclusion induces a bijection~\(F(A)\simeq F(B)\).
\end{theorem}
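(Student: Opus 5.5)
The plan is to apply \cref{gabber} to~\(A\), equipped with the restriction of the norm of~\(B\). This norm is a seminorm on~\(A\). Since \(A\) is dense in the complete ring~\(B\), the separated completion \(A^{\wedge}\) is~\(B\), and the completion map is the inclusion. So \cref{gabber} gives \(F(A)\simeq F(B)\) once we know that \((\Cau^{r}_{\infty}(A),\Cau^{r}_{\infty}(A)_{0})\) is a henselian pair for \(r=1\), \(2\). We will check this for every~\(r\), following the proof of \cref{banach} closely, with hypotheses (i) and (ii) replacing the passage to a Banach stage.

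First, \(1+\Cau^{r}_{\infty}(A)_{0}\) consists of units. Let \((a_n)_n\) be a cubically Cauchy net in~\(A\) converging to zero, and fix \(0<\rho<1\). On a late tail we have \(\lVert a_n\rVert\leq\rho\). Hypothesis (i) then puts \(b_n=(1+a_n)^{-1}\), computed in~\(B\) by the geometric series, inside~\(A\), with \(\lVert b_n\rVert\leq(1-\rho)^{-1}\). The identity \(b_n-b_m=b_n(a_m-a_n)b_m\) and this uniform bound show that \((b_n)_n\) is Cauchy, and the same estimate on each coordinate face shows it is cubically Cauchy. Its germ inverts \(1+(a_n)_n\). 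Since this holds for every element of the ideal \(\Cau^{r}_{\infty}(A)_{0}\), the ideal lies in the Jacobson radical, which gives the condition \(1+I\subset\Cau^{r}_{\infty}(A)^{\times}\).

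Second, the root condition. Take \(a_0,\dotsc,a_d\in\Cau^{r}_{\infty}(A)_{0}\) with representatives \(a_{i,n}\in A\) converging to zero. On a late tail, \(\max_i\lVert a_{i,n}\rVert<\epsilon_d\). Let \(t_n=t(a_{\bullet,n})\in B\) be the root given by \cref{hensel} applied in the Banach ring~\(B\). By hypothesis (ii), \(t_n\in A\). Applying \cref{e:hensel-bound} and \cref{e:hensel-diff} in~\(B\) with \(f=\id\) gives
\[
  \lVert t_n-1\rVert\leq c_d\max_i\lVert a_{i,n}\rVert,
  \qquad
  \lVert t_n-t_m\rVert\leq c_d\max_i\lVert a_{i,n}-a_{i,m}\rVert.
\]
The second estimate, restricted to each coordinate face, shows that \((t_n)\) is cubically Cauchy. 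The first shows that it converges to~\(1\). Each \(t_n\) is a root of \(P_{a_{\bullet,n}}\), so pointwise the germ of \((t_n)\) is a root of \(P_a\) in \(1+\Cau^{r}_{\infty}(A)_{0}\).

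The main subtlety is making sure that every construction stays inside~\(A\) rather than~\(B\). Hypotheses (i) and (ii) are designed exactly for this, and they replace the lifting to Banach stages used in \cref{banach}. There is also a minor point: \cref{gabber} asks for the seminorm setting with the completion identification \(\Cau^{r}_{\infty}(A)/\Cau^{r}_{\infty}(A)_{0}\simeq A^{\wedge}=B\). This holds here because \(A\) is dense in~\(B\) and the norm on~\(A\) is the restricted one. Once these are in place, \cref{gabber} concludes the proof.
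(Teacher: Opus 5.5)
Your proposal is correct and follows essentially the same route as the paper: give \(A\) the norm restricted from \(B\) and apply \cref{gabber}. You then verify that \((\Cau^{r}_{\infty}(A),\Cau^{r}_{\infty}(A)_{0})\) is henselian by termwise inverses and termwise distinguished roots, using hypotheses (i) and (ii) to stay inside \(A\), and using the geometric-series bound together with \cref{e:hensel-bound,e:hensel-diff} (with \(f=\id\)) in \(B\) to get convergence and cubical Cauchyness.
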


\begin{proof}
  We equip~\(A\) with the norm induced from~\(B\)
  and apply \cref{gabber}.
  It suffices to show that
  \((\Cau^{r}_{\infty}(A),\Cau^{r}_{\infty}(A)_{0})\) is henselian
  for any~\(r\geq0\).

  We now argue as in the proof of \cref{banach}.
  The first assumption ensures that the termwise inverses constructed there
  belong to~\(A\); the geometric-series bound in~\(B\) and the same
  difference identity show that they are Cauchy, and the same
  estimate after restriction to any coordinate face shows that they are
  Cauchy on every coordinate face.
  Similarly, the second assumption ensures that the distinguished roots
  constructed there belong to~\(A\), and
  \cref{e:hensel-bound,e:hensel-diff} show that they are Cauchy
  and converge to~\(1\).
  Applying the same estimates after restriction
  to any coordinate face proves Cauchyness on every coordinate face.
  Their germs verify the Jacobson radical
  and root conditions, respectively.
\end{proof}

For complex Banach algebras,
this gives the following:

\begin{corollary}\label{dense}
  Let~\(F\colon\Cat{Ring}_{\CC}\to\Cat{Set}\) be finitary and henselian
  invariant. Let~\(A\to B\) be a morphism of complex
  Banach algebras with dense image. If it induces a homeomorphism
  \(\Sp(B)\simeq\Sp(A)\),
  then~\(F(A)\to F(B)\) is a bijection.
\end{corollary}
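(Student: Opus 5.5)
We plan to deduce \cref{dense} from \cref{calculus}, applied not to \(A\to B\) directly but to the image of \(A\) in \(B\) enlarged by a holomorphic functional calculus. Let \(A'\subset B\) be the image of \(A\). Our first move is to replace \(A'\) by a suitable dense subalgebra \(C\) with \(A'\subset C\subset B\) that satisfies both hypotheses of \cref{calculus}, so that \(F(C)\simeq F(B)\). We then separately compare \(F(A)\) with \(F(C)\).

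\textbf{Choice of \(C\).} Take \(C\) to be the set of elements of \(B\) of the form \(f(a_1,\dots,a_k)\), where \(a_j\in A\) and \(f\) is holomorphic on a neighborhood of the joint spectrum of \((a_1,\dots,a_k)\) in \(A\). This uses the Arens–Calderón/Waelbroeck several-variable holomorphic calculus in \(A\), followed by pushforward to \(B\). Because \(\Sp(B)\simeq\Sp(A)\), joint spectra computed in \(A\) and in \(B\) agree, so the calculus is compatible.

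\textbf{Checking the hypotheses of \cref{calculus}.} The set \(C\) is a subalgebra, since sums and products of such elements are again of this form after concatenating tuples. Inverses stay in \(C\): if \(\lVert z\rVert<1\) with \(z=f(a)\), then \(1+f\) does not vanish on the joint spectrum, so \((1+f)^{-1}\) is holomorphic near it. For the Hensel root \(t(c)\), with \(c_i=f_i(a)\), the polynomial \(P_c\) has at each character a simple root near \(1\). The implicit function theorem then gives a holomorphic root function near the joint spectrum, and uniqueness in the ball of \cref{hensel} identifies it with \(t(c)\).

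\textbf{Comparison of \(F(A)\) with \(F(C)\).} It remains to show \(F(A)\to F(C)\) is bijective. Here we use that \(F\) is finitary. The ring \(C\) is a filtered colimit of images of algebras of the form \(\Cls{O}(K)\), and similarly \(A\) relates to Banach algebras obtained via the Arens–Calderón representation \(A=\injlim\) of finitely generated Banach subalgebras. The cleanest route, we expect, is to apply \cref{banach}- and \cref{calculus}-type arguments to \(A\) itself. Concretely, we enlarge \(A\) to \(\tilde C\subset A\), the elements of \(A\) given by the holomorphic calculus. This set is dense in \(A\) and satisfies the hypotheses, so \(F(\tilde C)\simeq F(A)\). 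Next, \(\tilde C\to C\) is surjective, since both are quotients of \(\injlim\Cls{O}(\text{neighborhoods of joint spectra})\), and the joint spectra are the same. Its kernel is the obstacle: an element of this colimit maps to zero in \(A\) or in \(B\) under different conditions a priori.

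\textbf{Main obstacle.} We expect the hardest step to be handling that kernel. We would try to avoid it by writing both \(F(A)\) and \(F(B)\) as \(F\) of the henselization-insensitive data of \(\injlim_{K}\Cls{O}(K)\), where \(K\) ranges over the images of \(\Sp(A)\) in \(\CC^k\), viewed as germs. The point is that \(\Cls{O}(U)\to A\) factors through a ring that is henselian along the kernel locus. This would use henselian invariance together with the fact that a function vanishing on the joint spectrum has ideal in the Jacobson radical of the image.
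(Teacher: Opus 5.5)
Your verification of the closure hypotheses of \cref{calculus} matches the paper's second step. However, your enlargements are vacuous. A commutative Banach algebra is closed under its own joint holomorphic functional calculus, so $f(a_1,\dots,a_k)$ already lies in $A$. Hence your $C$ is just the image $A'\cong A/I$ of $A$ in $B$, where $I=\ker(A\to B)$, and your $\tilde C$ is all of $A$.

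There is one small repair in your closure check. The calculus gives no norm bound on $h(c)-1$ in $B$, so "uniqueness in the ball" does not apply directly. Instead, identify $h(c)$ with $t(c)$ character by character, then use that the root is simple: write $P_c(X)-P_c(Y)=(X-Y)R(X,Y)$ and note that $R(h(c),t(c))$ is invertible.

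Your reduction therefore lands exactly on the question of whether $F(A)\to F(A/I)$ is bijective, and this is where the proposal has a genuine gap. The ``Main obstacle'' paragraph gestures at germ colimits over $\Cls{O}(K)$ but proves nothing, and no such colimit is needed.

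The missing idea is that $(A,I)$ is itself a henselian pair, so henselian invariance applies directly.
\begin{itemize}
\item Since $\Sp(B)\simeq\Sp(A)$, every character of $A$ factors through $B$ and hence kills $I$. As maximal ideals of $A$ are kernels of characters, $I$ lies in the Jacobson radical, giving $1+I\subset A^{\times}$.
\item For the root condition, let $a_0,\dots,a_d\in I$; their joint spectrum is $\{0\}$. Let $h$ be the holomorphic root of $P_z(T)$ near $z=0$ with $h(0)=1$. It exists by the implicit function theorem, because $1$ is a simple root of $P_0(T)=T^d(T-1)$.
\item Then $t=h(a_0,\dots,a_d)\in A$ satisfies $P_a(t)=0$, because the calculus is a ring homomorphism.
\item Moreover $t-1\in I$, because by naturality its image in $B$ is $h(0,\dots,0)-1=0$.
\end{itemize}
Hence $F(A)\simeq F(A/I)$. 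Combined with \cref{calculus} for $A/I\subset B$, you get $F(A)\simeq F(A/I)\simeq F(B)$, which is exactly the paper's proof.
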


\begin{proof}
  We consider~\(I=\ker(A\to B)\).
  Every character of~\(A\) factors through~\(B\),
  so~\(I\) is contained in the Jacobson radical of~\(A\).
  The holomorphic functional calculus in~\(A\),
  applied to the local root
  of~\cref{e:hensel},
  shows that~\((A,I)\) is henselian.
  The induced injection \(A/I\to B\) is spectral.
  Since~\(A/I\) is a Banach algebra,
  naturality of joint holomorphic functional calculus~\cite{Taylor72}
  shows that its image in~\(B\) satisfies
  the two closure conditions of \cref{calculus}.
  Now we see \(F(A)\simeq F(A/I)\simeq F(B)\).
\end{proof}

\begin{corollary}\label{holomorphic}
  Let~\(K\subset\CC^n\) be a polynomially convex compact subset.
  We write \(\Cls{O}(K)\) for the ring of germs of
  holomorphic functions along~\(K\)
  and \(\Cls{A}(K)\) for its completion\footnote{In this case,
    Oka–Weil identifies this completion with the uniform closure on~\(K\)
    of the polynomial functions, which is typically written as~\(\Cls{B}(K)\).
  };
  see, e.g.,~\cite[Section~6]{k-ros-3}.
  Let~\(F\colon\Cat{Ring}_{\CC}\to\Cat{Set}\) be
  a finitary henselian-invariant functor.
  Then completion induces a bijection
  \(F(\Cls{O}(K))\simeq F(\Cls{A}(K))\).
\end{corollary}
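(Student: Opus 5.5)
The plan is to apply \cref{calculus} with \(A=\Cls{O}(K)\) and \(B=\Cls{A}(K)\). The ring \(\Cls{O}(K)\) is a \(\CC\)-algebra with a natural seminorm, the sup norm on~\(K\). The first step is to check that this is a norm and that the completion is \(\Cls{A}(K)\), with \(\Cls{O}(K)\to\Cls{A}(K)\) injective with dense image. Injectivity needs a little care: a germ vanishing on~\(K\) need not vanish near~\(K\) if~\(K\) has no interior. So I would instead use the route in the proof of \cref{dense}. Alternatively, I would use \cref{gabber} directly for \(\Cls{O}(K)\) with its sup seminorm, whose separated completion is \(\Cls{A}(K)\). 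That reduces everything to showing \((\Cau^{r}_{\infty}(\Cls{O}(K)),\Cau^{r}_{\infty}(\Cls{O}(K))_{0})\) is henselian for \(r=1,2\).

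For that henselian property I would mimic the proof of \cref{calculus}, and it needs two closure facts about \(\Cls{O}(K)\).

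First, if \(z\in\Cls{O}(K)\) has \(\sup_K|z|<1\), then \(1+z\) has no zeros on~\(K\), hence none on a neighborhood of~\(K\). So \((1+z)^{-1}\) is again a germ in \(\Cls{O}(K)\).

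Second, if \(a_0,\dots,a_d\in\Cls{O}(K)\) have sup norms less than~\(\epsilon_d\), the inequality persists on a small neighborhood~\(U\) of~\(K\). On a smaller compact neighborhood one may apply \cref{hensel} in the Banach algebra of continuous functions that are holomorphic in the interior. This produces a root \(t\) with \(|t-1|\le r_d\), depending holomorphically on the point, by the implicit function theorem: \(P_a'\) is nonzero near~\(1\) for small~\(a\). Its restriction to~\(K\) is the distinguished root in \(\Cls{A}(K)\), by uniqueness in \cref{hensel}.

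With these two facts, the estimates in \cref{hensel}, namely \cref{e:hensel-bound,e:hensel-diff}, and the identity \(b_n-b_m=b_n(a_m-a_n)b_m\), apply exactly as in the proofs of \cref{banach,calculus}. They use only the sup seminorm, and the termwise inverses and roots are now germs rather than elements of a Banach ring. This yields both the Jacobson radical condition and the root condition on every coordinate face.

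One subtlety needs checking. The contraction estimates of \cref{hensel} are stated for Banach rings, but the termwise objects live in \(\Cls{O}(K)\), which is only seminormed. The bounds are needed only for the sup seminorm on~\(K\), and those follow by computing in \(\Cls{A}(K)\), where everything embeds compatibly via restriction to~\(K\). So the Cauchy estimates transfer.

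Then \cref{gabber} gives \(F(\Cls{O}(K))\simeq F(\Cls{A}(K))\). Polynomial convexity enters only through the footnoted identification of \(\Cls{A}(K)\) and is otherwise not needed.

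I expect the main obstacle to be the non-injectivity issue together with verifying that the seminorm setup of \cref{gabber} really tolerates a non-separated \(A\). \cref{gabber} is stated for seminormed algebras with separated completion, so I would lean on that formulation rather than on \cref{calculus}.
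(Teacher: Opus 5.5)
Your proposal is correct, but the route you finally commit to differs from the paper's. The paper's proof is exactly the alternative you mention, the ``route in the proof of \cref{dense}''. It sets \(I=\ker(\Cls{O}(K)\to\Cls{A}(K))\) and checks the two closure conditions of \cref{calculus} for the dense, normed image \(\Cls{O}(K)/I\subset\Cls{A}(K)\). It then separately checks that \((\Cls{O}(K),I)\) is henselian, using the same inverse and root constructions on germs vanishing on~\(K\). Finally it concludes \(F(\Cls{O}(K))\simeq F(\Cls{O}(K)/I)\simeq F(\Cls{A}(K))\).

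You instead feed the non-separated sup seminorm on \(\Cls{O}(K)\) directly into \cref{gabber}. That proposition is indeed formulated for seminorms and separated completion, and it is applied to a seminormed ring in the proof of \cref{banach}. So the kernel never has to be treated on its own.

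What your approach buys is a one-step argument with no extra appeal to henselian invariance for \((\Cls{O}(K),I)\). The cost is that you re-run the Cauchy-net estimates from the proofs of \cref{banach,calculus} for germs, rather than citing \cref{calculus} as a black box. This is legitimate because your inverses and roots are genuine elements of \(\Cls{O}(K)\), so henselianity holds in \(\Cau^{r}_{\infty}(\Cls{O}(K))\) itself, and the estimates only involve the seminorm pulled back from \(\Cls{A}(K)\).

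One small wording slip: restriction to~\(K\) does not ``embed'' \(\Cls{O}(K)\) into \(\Cls{A}(K)\), as you noted yourself; only the seminorm is needed there. Your root construction, via the Banach algebra of functions on a compact neighborhood~\(L\) that are holomorphic in the interior, is a fine substitute for the paper's composite with the implicit-function root \(h(a_0,\dotsc,a_d)\).
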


\begin{proof}
  The map~\(\Cls{O}(K)\to\Cls{A}(K)\) has dense image by definition.
  We consider
  \(I=\ker(\Cls{O}(K)\to\Cls{A}(K))\).
  If~\(z\in\Cls{O}(K)\) satisfies
  \(\lVert z\rVert_{K}<1\),
  after shrinking the neighborhood of~\(K\)
  on which~\(z\) is defined,
  \(1+z\) has no zeros there.
  Hence \((1+z)^{-1}\) is again a holomorphic germ along~\(K\).

  Similarly, near the origin in the coefficient space,
  the holomorphic implicit function theorem
  gives the distinguished root
  \(h(a_{0},\dotsc,a_{d})\) of~\cref{e:hensel}.
  For a tuple of germs
  whose supremum norm on~\(K\) is less than~\(\epsilon_{d}\),
  shrinking their common domain makes the composite~\(h(a_{0},\dotsc,a_{d})\)
  a holomorphic germ along~\(K\).
  Thus the image of~\(\Cls{O}(K)/I\) in~\(\Cls{A}(K)\)
  has the two closure properties in \cref{calculus}.
  The same argument for coefficient tuples in~\(I\),
  which vanish on~\(K\),
  shows that~\((\Cls{O}(K),I)\) is henselian.
  Therefore, \cref{calculus} gives
  \(F(\Cls{O}(K))\simeq F(\Cls{O}(K)/I)\simeq F(\Cls{A}(K))\).
\end{proof}

\subsection{Application to finite étale cohomology}\label{ss:finite}

We now specialize the preceding results.
Let~\(\Lambda\) be a finite abelian group.
For every~\(r\geq0\), the functor
\begin{align*}
  \Cat{Ring}&\to\Cat{Set};&
  A&\mapsto H^r_{\et}(\Spec A;\Lambda)
\end{align*}
is finitary.
It is furthermore
henselian invariant by the affine analog of proper base change,
proved by Gabber~\cite{Gabber94} and Huber~\cite{Huber93}.

Consider a Banach algebra~\(A\).
For every finitely generated~\(\CC\)-subalgebra \(B\subset A\),
restriction of characters gives a continuous map
\(
  \Sp(A)\to(\Spec B)(\CC)
\).
Composing the usual étale--Betti comparison with pullback along this map,
and taking the filtered colimit over~\(B\), gives
\(
  \Gamma_{\et}(\Spec A;\Lambda)
  \to\Gamma_{\Bet}(\Sp(A);\Lambda)
\).
Combining the discussion in the previous section
with Benoist’s calculation in~\cite{Benoist25}
gives the following:

\begin{theorem}\label{xhppcz}
  Let \(A\) be a Banach algebra
  and \(\Lambda\) a finite abelian group.
  Then the comparison morphism
  constructed above
  \begin{equation*}
    \Gamma_{\et}(\Spec A;\Lambda)
    \to\Gamma_{\Bet}(\Sp(A);\Lambda)
  \end{equation*}
  is an equivalence.
\end{theorem}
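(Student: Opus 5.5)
The plan is to reduce to the case \(A=\Cls{C}(X)\) for a compactum~\(X\), and then to the holomorphic germ rings~\(\Cls{O}(K)\), where Benoist's comparison applies. Throughout, the functors \(A\mapsto H^r_{\et}(\Spec A;\Lambda)\) are finitary and henselian invariant, as recalled above. Since both sides of the comparison are bounded below and we compare them on cohomology groups, it suffices to show that the map is an isomorphism on each \(H^r\).

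\textbf{Step 1: reduction to finitely generated algebras.} Write \(A\) as the filtered colimit, in Banach algebras, of its closed subalgebras~\(A_i\) topologically generated by finitely many elements. Each \(\Sp(A_i)\) is the joint spectrum, a polynomially convex compactum in some~\(\CC^n\), and \(\Sp(A)=\projlim_i\Sp(A_i)\). By \cref{banach}, \(H^r_{\et}(\Spec A;\Lambda)=\injlim_i H^r_{\et}(\Spec A_i;\Lambda)\). Continuity of Čech cohomology on cofiltered limits of compacta gives the same identity on the Betti side. So we may assume \(A\) is topologically generated by \(z_1,\dots,z_n\), with \(K=\Sp(A)\subset\CC^n\) polynomially convex.

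\textbf{Step 2: pass to \(\Cls{A}(K)\) and then to \(\Cls{O}(K)\).} The map \(\Cls{A}(K)=\Cls{B}(K)\to A\), sending the coordinates to the~\(z_j\), has dense image and induces a homeomorphism on spectra. By \cref{dense}, étale cohomology of~\(A\) agrees with that of~\(\Cls{A}(K)\). Then \cref{holomorphic} identifies this with the étale cohomology of~\(\Cls{O}(K)\). Both identifications are compatible with the comparison maps, because the Betti sides are all computed on the same~\(K\).

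\textbf{Step 3: compute \(\Cls{O}(K)\).} Write \(\Cls{O}(K)=\injlim_U\Cls{O}(U)\) over Stein neighborhoods~\(U\) of~\(K\); polynomially convex compacta have a cofinal system of such neighborhoods. The first input is Benoist's theorem~\cite{Benoist25}: for a Stein space~\(U\), \(H^r_{\et}(\Spec\Cls{O}(U);\Lambda)\simeq H^r_{\Bet}(U;\Lambda)\). The second is that étale cohomology is finitary, which gives \(H^r_{\et}(\Spec\Cls{O}(K);\Lambda)=\injlim_U H^r_{\et}(\Spec\Cls{O}(U);\Lambda)\). Finally, sheaf cohomology of the compactum satisfies \(H^r(K;\Lambda)=\injlim_U H^r(U;\Lambda)\) by tautness. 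Combining these gives the equivalence for~\(K\).

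\textbf{Main obstacle: naturality.} The difficult part is checking that Benoist's isomorphism, or the formulation of it we actually use, agrees with our comparison map, which is built by factoring through \((\Spec B)(\CC)\) for finitely generated~\(B\). The first thing to try is to invoke naturality of Benoist's map under holomorphic maps \(U\to(\Spec B)(\CC)\) together with naturality of the étale–Betti comparison on algebraic varieties. If Benoist states his result only for Stein manifolds, the germ ring may need an extra step. In that case I would write each polynomially convex~\(K\) as an intersection of polynomial polyhedra, whose interiors are Stein manifolds, and repeat the colimit argument.
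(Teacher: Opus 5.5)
Your outline is the paper's proof: reduce to topologically finitely generated $A$, identify $\Sp(A)$ with a polynomially convex $K\subset\CC^n$, pass to $\Cls{A}(K)$ by \cref{dense}, then to $\Cls{O}(K)$ by \cref{holomorphic}, and conclude with Benoist. However, Step~2 is wrong as written.

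In general there is no homomorphism $\Cls{A}(K)\to A$ sending the coordinates to the $z_j$. Extending $P\mapsto P(z_1,\dots,z_n)$ from polynomials to their uniform closure on $K$ would require $\lVert P(z)\rVert_A\leq C\sup_K\lvert P\rvert$. That is, the norm of $A$ would have to be controlled by the spectral radius. This fails whenever $A$ has a nonzero radical. For example, take $A=\CC[\varepsilon]/(\varepsilon^2)$ generated by $\varepsilon$. Then $K=\{0\}$ and $\Cls{A}(K)=\CC$, and the coordinate function is $0$ on $K$ but would have to be sent to $\varepsilon$. It also fails for semisimple non-uniform algebras such as the Wiener algebra of absolutely convergent power series on the disc. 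So \cref{dense} cannot be applied to the arrow you wrote.

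The correct arrow is the Gelfand transform $A\to\Cls{C}(K)$, and the fix has three parts.
\begin{itemize}
\item It lands in $\Cls{A}(K)$, because it is contractive for the supremum norm and sends $P(z_1,\dots,z_n)$ to $P|_K$.
\item For the same reason its image contains all polynomials, so it is dense.
\item Since $K$ is polynomially convex, $\Sp(\Cls{A}(K))=K=\Sp(A)$, so it induces a homeomorphism on spectra.
\end{itemize}
This is exactly the direction \cref{dense} is built for. Its proof first removes $\ker(A\to\Cls{A}(K))$, which lies in the Jacobson radical, via henselianity. It then uses closure of the image under holomorphic functional calculus. With this correction, your argument coincides with the paper's.

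Two minor remarks. In Step~1 you do not need \cref{banach}: the $A_i$ already exhaust $A$ as a ring, so finitarity alone suffices, which is what the paper uses. In Step~3 the paper cites Benoist's comparison directly for the Stein compactum $K$. You instead deduce it from Stein neighborhoods via finitarity and tautness, which is a valid alternative if the Stein-space statement is the form of Benoist's theorem you have.
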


\begin{proof}
  We write~\(X\) for \(\Sp(A)\).
  Suppose first that~\(A\)
  is topologically generated by~\(a_{1}\), \dots,~\(a_{n}\).
  \begin{align*}
    X&\to\CC^{n};&
    \chi&\mapsto(\chi(a_{1}),\dotsc,\chi(a_{n}))
  \end{align*}
  is a topological embedding because the~\(a_i\) topologically generate~\(A\).
  Its image is polynomially convex:
  If~\(z\) belongs to its polynomial hull,
  then, for every polynomial~\(P\),
  \begin{equation*}
    \lvert P(z)\rvert
    \leq\sup_{\chi\in X}
    \lvert P(\chi(a_{1}),\dotsc,\chi(a_{n}))\rvert
    \leq\lVert P(a_{1},\dotsc,a_{n})\rVert_{A},
  \end{equation*}
  so evaluation at~\(z\) extends to a character of~\(A\).
  Thus~\(z\) belongs to the image.
  We identify~\(X\) with this image.
  Under this identification, the Gelfand transform factors as
  \begin{equation*}
    A\to\Cls{A}(X)\to\Cls{C}(X).
  \end{equation*}
  The first map has dense image and induces a homeomorphism
  on maximal ideal spaces.
  Applying \cref{dense,holomorphic} to finite étale cohomology,
  we see
  \begin{equation*}
    \Gamma_{\et}(\Spec A;\Lambda)
    \simeq
    \Gamma_{\et}(\Spec\Cls{A}(X);\Lambda)
    \simeq
    \Gamma_{\et}(\Spec\Cls{O}(X);\Lambda).
  \end{equation*}
  The compactum~\(X\) is a Stein compactum.
  Benoist’s comparison theorem therefore
  identifies the last term with
  \(\Gamma_{\Bet}(X;\Lambda)\);
  see~\cite[Theorem~6.1]{Benoist25}.
  By naturality, the resulting equivalence is the comparison map
  in the statement.

  For general~\(A\), let~\(A_{i}\) run through its closed
  topologically finitely generated subalgebras.
  Their union is~\(A\) as an ordinary ring.
  Thus we have
  \(
    \injlim_{i}H^{r}_{\et}(\Spec A_{i};\Lambda)
    \simeq H^{r}_{\et}(\Spec A;\Lambda)
  \).
  On the topological side, standard continuity of sheaf cohomology for
  \(\Sp(A)\simeq\projlim_i\Sp(A_i)\) gives the corresponding statement.
  The result follows from the topologically finitely generated case.
\end{proof}

\begin{proof}[Proof of \cref{main-p}]
  After choosing a primitive \(p\)th root of unity,
  \(\mu_p^{\otimes q}\) is the constant étale sheaf~\(\FF_p\)
  on complex schemes.
  By \cref{mod-p}, the source in \cref{main-p} is
  \(\Gamma_{\et}(\Spec A;\mu_p^{\otimes q})\).
  The comparison in \cref{xhppcz} identifies it with
  \(\Gamma_{\Bet}(\Sp(A);\FF_p(q))\).
\end{proof}

\section{A remark on ordinary Chow groups}\label{s:ordinary}

Here, we explain the promised counterexample,
which shows that
the ordinary Chow groups do not work,
even for the C*-algebra case.

\begin{definition}\label{xx7m8a}
  Let~\(B\) be a smooth \(\QQ\)-algebra. Under the identification of
  \(K_{0}(B)\) with the Grothendieck group of coherent \(B\)-modules
  (see, e.g.,~\cite[Tags~0FDH and~0FDI]{SP}),
  its \emph{codimension filtration}
  \(
    \fil^{q}_{\cdim}K_{0}(B)
  \) is defined as the subgroup
  generated by the classes~\([M]\)
  of coherent modules~\(M\)
  satisfying \(\cdim(\supp M)\geq q\).
\end{definition}

\begin{definition}\label{xx0p2x}
  Let~\(X\) be a compactum.
  Its \emph{Atiyah–Hirzebruch filtration} is
  defined as
  \begin{equation*}
    \fil^{q}_{\AH}\ku^{0}(X)
    =\operatorname{im}((\tau_{\geq2q}\ku)^{0}(X)
    \to\ku^{0}(X)).
  \end{equation*}
\end{definition}

We observe a certain compatibility between these two filtrations:

\begin{proposition}\label{fils}
  Let \(A\) be a smooth \(\QQ\)-algebra
  and \(X\to(\Spec A)(\CC)\)
  be a continuous map from a compactum.
  Then \(K_{0}(A)\to\ku^{0}(X)\) determines
  a (nonstrict) morphism of filtered abelian groups
  \begin{equation*}
    \fil^{*}_{\cdim}K_{0}(A)\to
    \fil^{*}_{\AH}\ku^{0}(X).
  \end{equation*}
\end{proposition}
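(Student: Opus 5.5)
We must show: if \(M\) is a coherent \(A\)-module with \(\cdim(\supp M)\geq q\), then the image of \([M]\) in \(\ku^{0}(X)\) lies in \(\fil^{q}_{\AH}\ku^{0}(X)\). Since the filtration \(\fil^q_{\AH}\) is natural under pullback of compacta (it is defined as the image of a natural transformation \((\tau_{\geq2q}\ku)^0\to\ku^0\)), the plan is to reduce to a statement on the complex manifold \(U=(\Spec A)(\CC)\), or rather on compact pieces of it, and then pull back along \(X\to U\). Concretely, the image of \(X\) is compact, so it lies in a compact subset \(L\subset U\). The key claim is that \([M]\) restricted to a suitable neighbourhood of \(L\) comes from a class in \((\tau_{\geq2q}\ku)^0\), and hence its pullback to \(X\) does too.

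\textbf{Step 1: supports.} Let \(Z=\supp M\), which is closed of complex codimension \(\geq q\) in \(U\). Since \(A\) is regular, \(M\) has a finite resolution by finitely generated projectives. Hence \([M]\) lifts to relative algebraic \(K\)-theory with supports, \(K_0^Z(A)=K_0(\Spec A\text{ on }Z)\), and therefore topologically to \(\KU^0_Z(U)\), the \(K\)-theory of \(U\) with supports in \(Z\). Via the Atiyah--Segal/Grothendieck construction, this is the class of the complexified resolution, which is exact off \(Z\).

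\textbf{Step 2: connectivity with supports.} Work with compact neighbourhoods so that everything is a finite CW pair; replace \(U\) by a compact manifold-with-boundary neighbourhood \(N\) of \(L\) triangulated compatibly with \(Z\cap N\). The key claim is that local cohomology \(H^{j}_{Z}(N;\ZZ)\) vanishes for \(j<2q\), since \(Z\) has real codimension \(\geq 2q\) (by purity, or by triangulation and dimension counting/Alexander duality). The Atiyah--Hirzebruch spectral sequence for \(\ku\) with supports then shows that the map \((\tau_{\geq2q}\ku)^0_Z(N)\to\ku^0_Z(N)\) is surjective. Equivalently, \(\tau_{<2q}\ku\) has vanishing \(\pi_0\) of the support term, because the relevant cohomology groups \(H^{j}_Z(N;\pi_{j}\ku)\) with \(j<2q\) all vanish. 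Here one uses that \(\ku\)-cohomology with supports of a compact pair is a limit of finitely many Postnikov layers. Since \(\ku\) and \(\KU\) agree in degree \(0\) on compacta, we may use \(\ku\) throughout.

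\textbf{Step 3: conclude.} Forget supports and pull back: the map \(\ku^0_Z(N)\to\ku^0(N)\to\ku^0(X)\) commutes with the truncation maps from \(\tau_{\geq2q}\ku\). So \([M]\) lands in \(\fil^q_{\AH}\). Additivity gives the statement for the generated subgroup \(\fil^q_{\cdim}\).

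The main obstacle is Step 2. It needs a clean statement that cohomology with supports in a codimension-\(\geq q\) complex algebraic subset vanishes below degree \(2q\), uniformly enough to work with a compact neighbourhood. The first approach would be a Whitney stratification of \(Z\) with strata of real codimension \(\geq 2q\), followed by induction via excision and the Thom isomorphism on each stratum. The argument should also confirm that the topological class with supports really comes from the resolution. The latter is standard (Atiyah--Bott--Shapiro style difference construction).
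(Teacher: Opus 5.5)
Your proof is correct, but the topological step follows a different route from the paper's.

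\textbf{Shared steps.} Both arguments start from the same observation. The Betti realization of a finite projective resolution of \(M\) is exact, hence split, off \(Z(\CC)\). So the class of \([M]\) vanishes on \(U\setminus Z(\CC)\), where \(U=(\Spec A)(\CC)\). Both arguments also end the same way, by naturality and pullback along \(X\to U\).

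\textbf{The paper's route.} It chooses a finite CW model \(K\) of \(U\). Stratified general position then compresses the \((2q-1)\)-skeleton of \(K\) into \(U\setminus Z(\CC)\), since every stratum of \(Z(\CC)\) has real codimension \(\geq2q\). It concludes via \cref{xdm1cd}, which identifies \(\fil^q_{\AH}\) with the kernel of restriction to the \((2q-1)\)-skeleton.

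\textbf{Your route.} You lift the class to \(\ku^0(U,U\setminus Z(\CC))\). You then kill its image in \((\tau_{\leq 2q-1}\ku)^0\) of the pair, using the vanishing of \(H^j(U,U\setminus Z(\CC);\ZZ)\) for \(j<2q\), which follows from purity or the Borel--Moore dimension bound.

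\textbf{Comparison.} Your key input is purely cohomological. It is implied by, and is weaker than, the homotopical compression the paper needs. It requires neither a CW model, nor general position for stratified sets, nor \cref{xdm1cd}. The paper's route is more geometric. It matches the skeletal description of the filtration that the paper records anyway.

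\textbf{Two simplifications to your write-up.}
\begin{enumerate}
  \item The lift to relative \(\ku^0\) comes for free from the exact sequence of the pair. No Atiyah--Bott--Shapiro difference construction or algebraic \(K\)-theory with supports is needed, and any lift will do.
  \item \(\tau_{\leq 2q-1}\ku\) has only the finitely many Postnikov layers \(\Sigma^{2j}H\ZZ\), \(0\leq j<q\), so there is no convergence issue. You can therefore work on \(U\) itself, or on an open neighbourhood of the image of \(X\). This avoids the compact neighbourhood \(N\), whose boundary would need separate justification for the vanishing of \(H^j_Z(N;\ZZ)\) when \(Z\) meets \(\partial N\).
\end{enumerate}
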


We need the following equivalent description
of the Atiyah–Hirzebruch filtration:

\begin{lemma}\label{xdm1cd}
  For a finite CW~complex~\(K\),
  \begin{equation*}
    \fil^{q}_{\AH}\ku^{0}(K)=\ker\bigl(
    \ku^{0}(K)
    \to
    \ku^{0}\bigl(K^{(2q-1)}\bigr)
    \bigr),
  \end{equation*}
  where \(K^{(d)}\) denotes the \(d\)-skeleton of~\(K\).
\end{lemma}

\begin{proof}
  More generally, the fiber sequence
  \(\tau_{\geq d+1}E\to E\to\tau_{\leq d}E\)
  shows that the image of \((\tau_{\geq d+1}E)^{0}(K)\)
  in~\(E^{0}(K)\) is the kernel of restriction to~\(K^{(d)}\).
\end{proof}

\begin{proof}[Proof of \cref{fils}]
  Let \(M\) be a coherent \(A\)-module whose support~\(Z\)
  has codimension at least~\(q\).
  We show that \([M]\) maps to \(\fil^{q}_{\AH}\ku^{0}(X)\).
  Consider its finite projective resolution.
  After Betti realization,
  this resolution is an exact complex of vector
  bundles on~\((\Spec A)(\CC)\setminus Z(\CC)\).
  Consequently,
  the image restricts to zero there.

  The smooth affine variety~\((\Spec A)(\CC)\)
  has the homotopy type of a finite CW~complex~\(K\).
  Choose a homotopy equivalence \(f\colon K\to(\Spec A)(\CC)\).
  Every stratum
  in a Whitney stratification of~\(Z(\CC)\)
  has real codimension at least~\(2q\).
  Stratified general position therefore homotopes
  \(f\rvert_{K^{(2q-1)}}\)
  to a map with image in \((\Spec A)(\CC)\setminus Z(\CC)\).
  Hence \cref{xdm1cd} gives the desired result on~\(K\).
  Naturality and pullback along \(X\to(\Spec A)(\CC)\) transfer the conclusion from
  the finite CW~model to~\(X\).
\end{proof}

The link with Bloch’s Chow groups
is the following standard consequence
of Riemann–Roch without denominators;
see~\cite[Example~15.3.5]{Fulton98}.

\begin{proposition}\label{x4oj1b}
  Let \(A\) be a smooth \(\QQ\)-algebra.
  For \(q\geq1\), the \(q\)th Chern class vanishes on
  \(\fil^{q+1}_{\cdim}K_{0}(A)\) and hence induces
  \begin{equation*}
    c_{q}\colon
    \gr^{q}_{\cdim}K_{0}(A)\to\CH^{q}(A).
  \end{equation*}
  The assignment \([Z]\mapsto[\shf O_{Z}]\) gives a surjection in the
  other direction, and their composite on~\(\CH^{q}(A)\) is
  multiplication by~\((-1)^{q-1}(q-1)!\).
  In particular, \(c_{2}\) is an isomorphism.
\end{proposition}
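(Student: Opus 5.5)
This follows from Fulton's Riemann–Roch without denominators for smooth varieties; see \cite[Example~15.3.5]{Fulton98}. I would set it up as follows.

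\emph{Step one: define the cycle map.} For a closed integral subscheme \(Z\subset\Spec A\) of codimension~\(q\), take a finite locally free resolution of \(\shf O_Z\). This resolution exists because \(A\) is regular. Define \(\phi[Z]=[\shf O_Z]\). Its image lies in \(\fil^q_{\cdim}K_0(A)\) by definition. The map respects rational equivalence modulo \(\fil^{q+1}_{\cdim}\). To check this, write a rational equivalence on a codimension-\((q-1)\) subvariety~\(W\) as the divisor of a rational function~\(f\). Then \([\shf O_{\operatorname{div} f}]\) equals the difference of classes of coherent sheaves on~\(W\). By comparing with the filtration by supports, these classes agree modulo sheaves supported in codimension \(\geq q+1\). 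Surjectivity of \(\CH^q(A)\to\gr^q_{\cdim}K_0(A)\) follows by dévissage: every coherent module supported in codimension \(\geq q\) has a filtration with subquotients \(\shf O_{Z'}\) for integral~\(Z'\). Those with \(\cdim Z'>q\) die in \(\gr^q\), so \(\gr^q\) is generated by the classes of codimension-\(q\) cycles.

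\emph{Step two: vanishing and the key computation.} Riemann–Roch without denominators \cite[Theorem~15.3]{Fulton98} computes Chern classes of \(\shf O_Z\) for \(Z\) of codimension~\(q\) in a smooth variety. It gives \(c_i(\shf O_Z)=0\) for \(0<i<q\) and \(c_q(\shf O_Z)=(-1)^{q-1}(q-1)!\,[Z]\). This is the content of \cite[Example~15.3.5]{Fulton98}. Applying it to a generator of \(\fil^{q+1}_{\cdim}\) shows that \(c_q\) vanishes there: the support has codimension \(\geq q+1\), so the lower Chern classes up to~\(c_q\) all vanish.

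We also need \(c_q\) to be additive on \(\fil^q_{\cdim}\). The Whitney formula gives \(c_q(x+y)=c_q(x)+c_q(y)+\sum_{0<i<q}c_i(x)c_{q-i}(y)\). The cross terms vanish because \(c_i(x)=0\) for \(0<i<q\) when \(x\in\fil^q\); this uses the same result on generators together with multiplicativity of the total Chern class. So \(c_q\) descends to \(\gr^q_{\cdim}K_0(A)\). The formula above shows that \(c_q\circ\phi\) is multiplication by \((-1)^{q-1}(q-1)!\).

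\emph{Step three: the case \(q=2\).} Here the constant is \(-1\), so \(c_2\circ\phi=-\id\). Hence \(\phi\) is injective, and it is surjective by step one. Therefore \(\phi\) is an isomorphism, and so is \(c_2=-\phi^{-1}\).

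The step I expect to be the main obstacle is checking that Fulton's statement, which is phrased for the cycle \(Z\) with its local Chern class, applies verbatim in our affine smooth \(\QQ\)-setting. This amounts to identifying \(\CH^q(A)\) with Bloch's definition (\cref{x4h978}) and the Chern classes with those of \cite[Chapter~3]{Fulton98} extended to \(K_0\). Since \(\Spec A\) is a smooth quasiprojective variety, I would simply cite that example.
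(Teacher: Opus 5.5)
Your proposal is correct and follows the paper, which gives no argument beyond citing Riemann--Roch without denominators in the form of \cite[Example~15.3.5]{Fulton98}. Your unpacking is the standard one: dévissage for surjectivity of \(\phi\), multiplicativity of the total Chern class for additivity of \(c_q\) on \(\fil^{q}_{\cdim}K_0(A)\) and its vanishing on \(\fil^{q+1}_{\cdim}K_0(A)\), and \(c_2\circ\phi=-\id\) for \(q=2\). The only loosely stated step is compatibility with rational equivalence. It is cleanest to write \(f=a/b\) with \(a,b\in\shf O(W)\setminus\{0\}\): the exact sequence \(0\to\shf O_W\xrightarrow{a}\shf O_W\to\shf O_W/a\to0\) gives \([\shf O_W/a]=0\) in \(K_0(A)\), while dévissage and the length formula give \([\shf O_W/a]\equiv\phi(\operatorname{div}a)\) modulo \(\fil^{q+1}_{\cdim}K_0(A)\).
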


\begin{corollary}\label{xxjrs2}
  For a Banach algebra~\(A\),
  the map
  \(\CH^{2}(A)\to H^{4}_{\Bet}(\Sp(A);\ZZ(2))\)
  factors through
  \begin{equation*}
    \gr^{2}_{\AH}\ku^{0}(\Sp(A)).
  \end{equation*}
\end{corollary}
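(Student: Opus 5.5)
We need to show: for a Banach algebra \(A\), the map \(\CH^2(A)\to H^4_{\Bet}(\Sp(A);\ZZ(2))\) factors through \(\gr^2_{\AH}\ku^0(\Sp(A))\). The strategy is to reduce to smooth \(\QQ\)-algebras mapping into \(A\), and then use \cref{x4oj1b} and \cref{fils} there.

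\emph{Reduction to smooth stages.} By \cref{top-lan}, \(\CH^2(A)\) is the colimit of \(\CH^2(B)\) over \(B\in(\Cat{Ring}_{\QQ}^{\sm})_{/A}\), and the cycle map is the colimit of the maps \(\CH^2(B)\to H^4_{\Bet}(X;\ZZ(2))\). Here \(X=\Sp(A)\), and each map is induced by the continuous map \(X\to(\Spec B)(\CC)\) obtained from the Gelfand transform. It therefore suffices to construct, naturally in \(B\), a factorization of \(\CH^2(B)\to H^4_{\Bet}(X;\ZZ)\) through \(\gr^2_{\AH}\ku^0(X)\). This is a subquotient of a fixed group depending only on \(X\), so naturality amounts to compatibility with the maps \(K_0(B)\to\ku^0(X)\). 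These maps are compatible by functoriality of Betti realization of vector bundles.

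\emph{The factorization for fixed \(B\).} By \cref{x4oj1b}, \(c_2\colon\gr^2_{\cdim}K_0(B)\to\CH^2(B)\) is an isomorphism, with inverse \([Z]\mapsto[\shf O_Z]\). By \cref{fils}, \(K_0(B)\to\ku^0(X)\) sends \(\fil^q_{\cdim}\) into \(\fil^q_{\AH}\) for \(q=2,3\). It therefore induces
\begin{equation*}
  \CH^2(B)\simeq\gr^2_{\cdim}K_0(B)\to\gr^2_{\AH}\ku^0(X).
\end{equation*}
Next we need a map \(\gr^2_{\AH}\ku^0(X)\to H^4_{\Bet}(X;\ZZ)\). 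The cofiber sequence \(\tau_{\geq6}\ku\to\tau_{\geq4}\ku\to\Sigma^4H\ZZ\) gives a map \((\tau_{\geq4}\ku)^0(X)\to H^4(X;\ZZ)\) that kills the image of \((\tau_{\geq6}\ku)^0(X)\).

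\emph{Main obstacle: well-definedness.} This map is a priori defined on \((\tau_{\geq4}\ku)^0(X)\), not on its image \(\fil^2_{\AH}\) in \(\ku^0(X)\). One must check that elements of \((\tau_{\geq4}\ku)^0(X)\) mapping to zero in \(\ku^0(X)\) land in \(\fil^3\) modulo the relevant indeterminacy. Such elements come from \((\Sigma^{-1}\tau_{\leq3}\ku)^0(X)\), whose image in \(H^4\) is the differential \(d_3\) from \(H^1\) and \(H^3\) of \(\ku\); that differential vanishes, since \(d_3=Sq^3\) integrally is zero on \(\ku\) in these low degrees. I would therefore argue directly that the composite \(\tau_{\leq3}\ku[-1]\to\tau_{\geq4}\ku\to\Sigma^4H\ZZ\) is nullhomotopic, using \([\Sigma^{-1}\tau_{\leq3}\ku,\Sigma^4H\ZZ]\) and the fact that the \(k\)-invariant of \(\tau_{\leq4}\ku\) is trivial at \(H\ZZ\) level. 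This yields a well-defined map \(\gr^2_{\AH}\ku^0(X)\to H^4_{\Bet}(X;\ZZ)\), namely the edge map of the Atiyah–Hirzebruch spectral sequence.

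\emph{Identifying the composite.} Finally, the composite \(\CH^2(B)\to\gr^2_{\AH}\to H^4\) must agree with the cycle map. By naturality we may check this on \((\Spec B)(\CC)\), or on a finite CW model \(K\). On the edge of the Atiyah–Hirzebruch spectral sequence, \(\gr^2_{\AH}\to H^4\) sends a class in \(\fil^2\) to \(-c_2\) of its topological Chern class. Sign conventions should be absorbed by matching with the sign \((-1)^{q-1}(q-1)!=-1\) in \cref{x4oj1b}. Compatibility of algebraic and topological Chern classes with the cycle class then gives agreement with the cycle map up to this sign, and the sign can be fixed once and for all.
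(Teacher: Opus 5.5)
Your outline is the paper's: reduce to smooth \(B\to A\) via \cref{top-lan}, use \cref{x4oj1b,fils} to get \(\CH^2(B)\simeq\gr^2_{\cdim}K_0(B)\to\gr^2_{\AH}\ku^0(X)\), and compare with the cycle map using Chern-class compatibility. You diverge only in the choice of the second map \(\gr^2_{\AH}\ku^0(X)\to H^4_{\Bet}(X;\ZZ(2))\), and your justification of that step contains a false claim.

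\emph{The paper's choice.} The paper uses the topological Chern class \(c_2\), which is defined on all of \(\ku^0(X)\), so no representative issue arises.
\begin{itemize}
\item It is additive on \(\fil^2_{\AH}\): rank and \(c_1\) vanish there, so \(c_2(x+y)=c_2(x)+c_1(x)c_1(y)+c_2(y)\) reduces to \(c_2(x)+c_2(y)\).
\item It kills \(\fil^3_{\AH}\) by \cref{xdm1cd}, because \(H^4(K)\to H^4(K^{(5)})\) is injective.
\end{itemize}
Precomposing with \(c_2^{-1}\) on the algebraic side gives exactly the cycle map, since \(c_2^{\top}\circ(\text{realization})=\mathrm{cl}\circ c_2^{\mathrm{alg}}\) on \(K_0(B)\). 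This involves no sign and no comparison between an edge map and Chern classes. Incidentally, \([Z]\mapsto[\shf O_Z]\) is \(-c_2^{-1}\), not \(c_2^{-1}\), by \cref{x4oj1b}.

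\emph{The flaw in your route.} You correctly reduce well-definedness of the edge map to the vanishing of the induced map \((\tau_{\le3}\ku)^{-1}(X)\to H^4(X;\ZZ)\). However, the spectrum-level composite \(\Sigma^{-1}\tau_{\le3}\ku\to\Sigma^4H\ZZ\) is not nullhomotopic. Its restriction to \(\Sigma^{-1}\Sigma^2H\ZZ\) is a suspension of the first \(k\)-invariant \(\beta\mathrm{Sq}^2\) of \(\ku\). That invariant is nonzero: the same operation \(\beta\mathrm{Sq}^2\rho\) is the differential that must kill \(H^4\) in \cref{counterexample}. So the claim that the \(k\)-invariant ``is trivial at \(H\ZZ\) level'' is false.

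What is true is an unstable statement. One has \((\tau_{\le3}\ku)^{-1}(X)\cong H^1(X;\ZZ)\); only \(H^1\) contributes, not \(H^3\). On it the map is \(\beta\mathrm{Sq}^2\rho\), which vanishes because \(\mathrm{Sq}^2\) kills degree-one classes. With this replacement your argument works. You would still need the separate fact that the edge map equals \(-c_2\) on \(\fil^2_{\AH}\), which the paper avoids entirely by using \(c_2\) from the start.
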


\begin{proof}
  We write~\(X\) for \(\Sp(A)\).
  For every smooth \(\QQ\)-algebra~\(B\) equipped
  with a homomorphism \(B\to A\),
  \cref{fils,x4oj1b} give a map
  \begin{equation*}
    \CH^{2}(B)
    \xrightarrow{c_{2}^{-1}}
    \gr^{2}_{\cdim}K_{0}(B)
    \to
    \gr^{2}_{\AH}\ku^{0}(X).
  \end{equation*}
  We then take the colimit and use
  \cref{top-lan} to obtain
  \(\CH^{2}(A)\to\gr^{2}_{\AH}\ku^{0}(X)\).
  Its composite with the topological Chern class
  \(c_{2}\colon\gr^{2}_{\AH}\ku^{0}(X)\to H^{4}_{\Bet}(X;\ZZ(2))\)
  is the cycle map.
\end{proof}

We finally come to the following:

\begin{example}\label{counterexample}
  We construct a compactum~\(X\)
  such that \(\gr^{2}_{\AH}\ku^{0}(X)\) vanishes
  but \(H^{4}_{\Bet}(X;\ZZ)\) does not.
  By \cref{xxjrs2}, this shows that the cycle map
  \(\CH^{2}(\Cls{C}(X))\to H^{4}_{\Bet}(X;\ZZ)\)
  is not surjective.

  We consider
  \begin{align*}
    W&=\RR P^{2}\wedge\RR P^{4},&
    X&=\Sigma W.
  \end{align*}
  The integral Künneth theorem and the universal coefficient
  theorem give
  \begin{equation*}
    \widetilde H_{\Bet}^{*}(X;\ZZ)\simeq
    \begin{cases}
      \ZZ/2&{*}=4,5,6,7,\\
      0&\text{otherwise.}
    \end{cases}
  \end{equation*}
  On the other hand, the standard calculation
  \(\widetilde\ku\vphantom\ku^{0}(\RR P^{2m})\simeq\ZZ/2^{m}\)
  and
  \(\widetilde\ku\vphantom\ku^{1}(\RR P^{2m})=0\)
  and the Künneth theorem for~\(\KU\) show that
  \begin{equation*}
    \widetilde\ku\vphantom\ku^{0}(X)
    \simeq\widetilde\ku\vphantom\ku^{1}(W)
    \simeq\Tor_{1}^{\ZZ}(\ZZ/2,\ZZ/4)
    \simeq\ZZ/2.
  \end{equation*}

  We use Adams indexing for the Atiyah–Hirzebruch spectral sequence:
  \begin{equation*}
    E_{1}^{s,t}=\widetilde H_{\Bet}^{s+t}(X;\ZZ(s))
    \Rightarrow\widetilde\ku\vphantom\ku^{t-s}(X).
  \end{equation*}
  Along the line contributing to \(\widetilde\ku\vphantom\ku^{0}(X)\),
  the only nonzero terms are
  \begin{align*}
    E_{1}^{2,2}&=H_{\Bet}^{4}(X;\ZZ(2))\simeq\ZZ/2,&
    E_{1}^{3,3}&=H_{\Bet}^{6}(X;\ZZ(3))\simeq\ZZ/2.
  \end{align*}
  The second term survives until~\(E_{\infty}\)
  for dimensional reasons.
  Therefore, \(\gr_{\AH}^{2}\ku^{0}(X)=E_{\infty}^{2,2}\) vanishes
  (in fact this dies on~\(E_{2}\) for dimensional reasons).
\end{example}

\begin{remark}\label{xvmuzo}
  One way to fix the failure for \(A=\Cls{C}(X)\)
  is simply to consider the usual motivic cohomology
  instead of the lisse variant.
  Indeed, in~\cite{mot-cx},
  we construct
  an isomorphism
  \(H_{\mot}^{2q}(\Cls{C}(X);\ZZ(q))\simeq H_{\Bet}^{2q}(X;\ZZ(q))\)
  for any~\(q\geq0\) and compactum~\(X\).

  However, this fix does not work in general.
  Let~\(P\) be a finite polyhedral model
  with the homotopy type of the space in \cref{counterexample},
  embedded semialgebraically in \(\RR^N\subset\CC^N\).
  Then~\(P\) is a semianalytic Stein compactum,
  and \(\Cls{O}(P)\) is regular
  (see, e.g.,~\cite[Theorem~6.6]{k-ros-3}), hence Ind-smooth over~\(\CC\)
  (and hence over~\(\QQ\))
  by Popescu’s theorem.
  Therefore, the motivic cohomology of~\(\Cls{O}(P)\)
  coincides with its lisse variant.
  Choose a polynomially convex compact neighborhood~\(K\) of~\(P\)
  that is a compact regular neighborhood,
  so that \(K\) is homotopy equivalent to~\(P\).
  There are restriction homomorphisms
  \begin{equation*}
    \Cls{A}(K)\to\Cls{O}(P)\to\Cls{C}(P).
  \end{equation*}
  The same codimension-filtration argument
  as in \cref{xxjrs2} then shows that the
  image of
  \(
    H^4_{\mot}(\Cls{O}(P);\ZZ(2))\to H^4_{\Bet}(P;\ZZ(2))
  \)
  factors through \(\gr^2_{\AH}\ku^0(P)=0\).
  Since \(H^4_{\Bet}(K;\ZZ(2))\to H^4_{\Bet}(P;\ZZ(2))\) is an isomorphism,
  functoriality of the restriction maps shows that
  \begin{equation*}
    H^4_{\mot}(\Cls{A}(K);\ZZ(2))\to H^4_{\Bet}(K;\ZZ(2))
  \end{equation*}
  is not surjective.
\end{remark}

\bibliographystyle{plain}
\let\SS\oldSS \let\top\oldtop  \newcommand{\yyyy}[1]{}

\end{document}